  \newcommand{\C}[0]{\mathbb{C}}
\newcommand{\R}[0]{\mathbb{R}}  
\newcommand{\HH}[0]{{\mathbb{H} ^2}}
\newcommand{\calH}[0]{\mathcal{H}} \newcommand{\calL}[0]{\mathcal{L}} \newcommand{\calJ}[0]{\mathcal{J}}
\newcommand{\tensor}[0]{\otimes}
\newcommand{\zbar}[0]{\overline{z}}
\newcommand{\suchthat}{\;\ifnum\currentgrouptype=16 \middle\fi|\;}
\newtheorem{theorem}{Theorem}[section]
\newtheorem{lemma}[theorem]{Lemma}
\newtheorem{proposition}[theorem]{Proposition}
\newtheorem{corollary}[theorem]{Corollary}
\newtheorem{conjecture}[theorem]{Conjecture}
\newtheorem{question}[theorem]{Question}
\begin{document}

\title{Harmonic maps of punctured surfaces to the hyperbolic plane}
\author{Andy C. Huang}
\thanks{The author acknowledges support from U.S. National Science Foundation grants DMS 1107452, 1107263, 1107367 "RNMS: Geometric Structures and Representation Varieties" (the GEAR Network)}
\address{Rice University, Houston, TX USA}
\email{andy.c.huang@outlook.com}
\urladdr{http://www.math.rice.edu/~ach3/}
\allowdisplaybreaks

\begin{abstract}
In this paper, we construct {\em polynomial growth} harmonic maps from once-punctured Riemann surfaces of any finite genus to any even-sided, regular, ideal polygon in the hyperbolic plane. We also establish their uniqueness within a class of maps which differ by exponentially decaying variations. Previously, harmonic maps from once-punctured spheres to $\HH$ have been parameterized by holomorphic quadratic differentials on $\C$ (\cite{WA94}, \cite{HTTW95}, \cite{ST02}). Our harmonic maps, mapping a genus $g>1$ domain to a $k$-sided polygon, correspond to meromorphic quadratic differentials having one pole of order $(k+2)$ and $(4g +k-2)$ zeros (counting multiplicity). In this way, we can associate to these maps a holomorphic quadratic differential on the punctured Riemann surface domain. As an example, we specialize our theorems to obtain a harmonic map from a punctured square torus to an ideal square, and deduce the five possibilities for the divisor of its Hopf differential.
\end{abstract}

\maketitle

\tableofcontents

\section{Introduction and Main results}
This paper begins an investigation of the question: what is the shape of a harmonic map between surfaces from higher topological complexity to lower topological complexity? In order to focus this question, we first consider the special class of harmonic maps between compact hyperbolic surfaces of different genera. This family of maps has the appealing property that harmonic maps between surfaces are in bijection with homomorphisms between their fundamental groups.

For, any smooth map $u\in \mathcal{C}^{\infty}(\Sigma_g, \Sigma _h)$ between compact hyperbolic surfaces $(\Sigma _g, \sigma)$ and $(\Sigma _h, \rho)$ of any genera induces a homomorphism $u_*:\pi_1(\Sigma _g) \rightarrow \pi _1(\Sigma _h)$ between their fundamental groups. Conversely, since compact surfaces are $K(\pi_1,1)$ spaces, any homomorphism $\mu$ between their fundamental groups can be induced by a continuous map $f$ between them, i.e., $f_* \equiv \mu$. Negative curvature of the target metric $\rho$ ensures a unique harmonic representative exists in the homotopy class of that continuous map, if the map is non-constant (\cite{ES64}, \cite{Hartman67}). In the case the map is constant, the trivial homomorphism $u_*\left(\pi _1 (\Sigma _g )\right)=e$ is induced. We state this correspondence as
$$ \{ \left[u\right] | u\in \mathcal{C}^{\infty}(\Sigma_g, \Sigma _h), \mbox{ harmonic, non-constant}\} \overset{1-1}\longleftrightarrow \left\{ u_* \in Hom\left( \pi _1 (\Sigma _g) ,\pi_1 (\Sigma _h)\right) \setminus \{ e \} \right\}.$$

Furthermore, any smooth map $u$ defines a symmetric $2$-tensor $u^*(\rho)$ on $\Sigma _g$ by pulling back the target metric. We use the complex structure induced by $\sigma$ on $\Sigma _g$ to extract the $(2,0)$-component of this pullback metric, called the \emph{Hopf differential of $u$}, $$\Phi ^u:= \left(u^*(\rho)\right)^{(2,0)}.$$ Hopf observed that the harmonicity of $u$ implies holomorphicity of $\Phi ^u$ with respect to this complex structure \cite{Hopf51}. Thus, there is a map
$$\mathcal{C}^{\infty}(\Sigma _g, \Sigma _h) \ni u \longmapsto \Phi ^u \in QD(\Sigma _g),$$
where $QD(\Sigma _g)$ is the vector space of holomorphic quadratic differentials on $\Sigma _g$.

In this way, the complex analytic object $\Phi ^u$ captures some of the differential topology of the harmonic map $u$ and equivalently the algebraic information of the homomorphism $u_*$ between their fundamental groups $\pi_1(\Sigma _g)$ and $\pi_1(\Sigma _h)$. That $\Phi ^u$ captures some differential topology of the map is not a new concept. The dual measured foliations of $\Phi ^u$ describe the maximal and minimal stretch directions of the harmonic map $u$ (\cite{Minsky92}, \cite{HTTW95}).

It is our present aim to create a local model to study the handle-crushing harmonic maps from higher genus to lower genus surfaces, and then characterize their Hopf differentials. Such handle crushing maps can be locally described as mapping a surface of positive genus and with boundary onto a disk, non-injectively taking interior to interior and monotonically taking boundary to boundary. These induce the trivial homomorphism on fundamental groups of punctured surfaces. We produce a model of handle crushing harmonic maps in the following:

\begin{theorem} \label{thm:crusher_existence} {\emph (Existence)}
For any once punctured genus $g \geq 0$ surface $\Sigma _g$ and for any regular ideal $k$-sided polygon $\mathcal{P}_k$ in $\HH$ for $k>2$ even, there exists a harmonic map $h: \Sigma _g \rightarrow \HH$ whose image $h (\Sigma _g)$ is the interior of $\mathcal{P}_k$ and whose closure $\overline{h(\Sigma_g)}=\overline{\mathcal{P}_k}$. For this harmonic map, the Hopf differential has $(4g+k-2)$ zeros and one pole of order $(k+2)$ at the puncture.
\end{theorem}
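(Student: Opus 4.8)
The plan is to move the problem entirely to the Hopf-differential side of the correspondence recalled above: first produce a meromorphic quadratic differential $\Phi$ on the filled-in surface with exactly the asserted divisor, then solve the single scalar Bochner equation that reconstructs a harmonic map from its prescribed Hopf differential, and finally read the image geometry off the trajectory structure of $\Phi$ near the puncture. Let $\overline{\Sigma}_g$ be the closed surface obtained by filling in the puncture $p$. A quadratic differential holomorphic on $\Sigma_g$ with a pole of order at most $k+2$ at $p$ is a global section of $K^{2}_{\overline{\Sigma}_g}\otimes\mathcal{O}((k+2)p)$, a line bundle of degree $4g-4+(k+2)=4g+k-2>2g-2$; by Riemann--Roch this space has dimension $3g+k-1$, one more than the subspace allowing only a pole of order $k+1$, so a generic element $\Phi$ has a pole of order exactly $k+2$. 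Since $\deg K^{2}_{\overline{\Sigma}_g}=4g-4$, the zeros of such a $\Phi$ then number exactly $(4g-4)+(k+2)=4g+k-2$ with multiplicity, which is precisely the divisor claimed. In a coordinate $z$ centered at $p$ the polar part is modeled, after uniformization, on the polynomial differential $z^{k-2}\,dz^{2}$ on $\C$, and I would select the symmetric representative so that this local model is exactly the one whose associated harmonic map has image a \emph{regular} ideal $k$-gon (\cite{WA94}, \cite{HTTW95}); the parity hypothesis on $k$ enters here, in arranging the required symmetry.

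The analytic heart is to realize $\Phi$ as a Hopf differential. Writing $\Phi=\phi\,dz^{2}$ and letting $\calH,\calL$ be the holomorphic and anti-holomorphic energy densities of the sought map, harmonicity into a curvature $-1$ target is equivalent (\cite{WA94}) to a Bochner equation which, in a flat local coordinate with $\calH=e^{2w}$ and $\calH\calL=|\phi|^{2}$, reads
$$\Delta_0 w \;=\; e^{2w}-|\phi|^{2}e^{-2w};$$
globally it is the intrinsic version written against a fixed background metric and carrying that metric's curvature term, but the nonlinearity is unchanged. The Jacobian is $\calH-\calL=e^{2w}-|\phi|^{2}e^{-2w}$, so positivity of the Jacobian is the condition $e^{2w}>|\phi|$, and this is exactly what forces the resulting $h$ to be an orientation-preserving immersion. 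I would solve the equation on $\Sigma_g$ by a sub/supersolution argument over an exhaustion by compact subdomains. The flat cone metric $|\Phi|$ itself furnishes the lower barrier: it is the degenerate, zero-Jacobian case $\calH=\calL=|\phi|$ (in a local coordinate, $w_{-}=\tfrac12\log|\phi|$), which solves the equation with vanishing right-hand side away from the zeros and picks up nonnegative delta masses at the zeros, hence is a subsolution; any solution lying strictly above it then satisfies $\calH>\calL$. The upper barrier is assembled from the polynomial-growth model of \cite{HTTW95} in a punctured disk about $p$, glued to a large comparison function over the compact part. Monotone iteration between the two barriers on each subdomain, together with interior gradient and Harnack estimates to pass to the limit, yields a global solution $w$ and thereby a harmonic map $h$ with $\Phi^{h}=\Phi$.

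It remains to identify the image. On $\Sigma_g\setminus\Phi^{-1}(0)$ the flat metric $|\Phi|$ carries horizontal and vertical foliations whose leaves are the stretch directions of $h$; at the order-$(k+2)$ pole the differential $\sqrt{\Phi}$ has $k$ separatrix sectors, and along rays into each sector the polynomial-growth estimates force $h$ to converge to a single ideal point, with the $k$ separating trajectories mapping onto the $k$ geodesic sides of $\mathcal{P}_k$. Since $\calH>\calL$ throughout, $h$ is an orientation-preserving immersion whose image is the open polygon and whose boundary values fill $\partial\mathcal{P}_k$, giving $h(\Sigma_g)=\mathrm{int}\,\mathcal{P}_k$ and $\overline{h(\Sigma_g)}=\overline{\mathcal{P}_k}$; for $g\geq 1$ the map is not injective, the handles being crushed in the cusp neighborhood. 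The main obstacle is precisely this analysis at the puncture: one must show that the \emph{single} end of $\Sigma_g$ realizes all $k$ ideal vertices in the correct cyclic order, that no energy leaks away, and that the compact-part solution matches the polynomial-growth model finely enough to pin down these asymptotics. This asymptotic matching in the cusp — rather than the Riemann--Roch bookkeeping or the interior existence, both comparatively routine — is where the real difficulty lies.
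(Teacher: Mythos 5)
Your proposal is, in essence, the route the paper itself describes and then abandons in its ``Remark on approach'': fix a meromorphic $\Phi$ with the right divisor and try to build the map by solving the Bochner equation. The obstruction is not that this is merely harder than you estimate; two of your steps are actually false for $g\geq 1$. The fatal one is orientation. Your barrier scheme is engineered so that the solution lies above the subsolution $\tfrac12\log|\phi|$, giving $\mathcal{J}=e^{2w}-|\phi|^2e^{-2w}>0$, and you conclude that ``$h$ is an orientation-preserving immersion whose image is the open polygon.'' But a handle-crushing harmonic map cannot be orientation preserving: the paper states this in \S\ref{section:preliminaries} (``Our handle crushing harmonic maps cannot be orientation preserving, so we have to develop different techniques'') and proves it concretely for the square torus in Corollary \ref{cor:torus_crusher}, where one of the double zeros of $\Phi^h$ necessarily lies in a region with $\mathcal{J}^h<0$. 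As a sanity check: a proper harmonic map onto the interior of $\mathcal{P}_k$ with $\mathcal{J}\geq 0$ has only isolated branch points, hence is a branched covering of a disk; a pole of order exactly $k+2$ at the single puncture means the end traverses the $k$ sides of $\partial\mathcal{P}_k$ once, so the covering has degree one, hence no branching, hence the map is a diffeomorphism --- forcing $g=0$. So for $g\geq 1$ no solution of your scheme with the required completeness and asymptotics can exist, and the final step of your argument (``read the image off the foliation, $\calH>\calL$ throughout'') collapses. This is exactly why the paper proceeds instead by compact exhaustion: Dirichlet problems on $\Sigma_s$ with Scherk boundary data, energy control via the parachute maps and the Doubling Lemma of \S\ref{section:toyproblem}, with no sign hypothesis on $\mathcal{J}$ anywhere.

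There is a second, independent gap: the reconstruction ``Bochner solution $\Rightarrow$ harmonic map with Hopf differential $\Phi$'' (Theorem \ref{thm:scherkmap}, from \cite{WA94}, \cite{HTTW95}) is a statement about simply connected domains. One integrates the data $(\mathcal{H},\Phi)$ to a developing map on the universal cover, equivariant with respect to some representation $\pi_1(\Sigma_g)\to \mathrm{Isom}(\HH)$; to descend to $\Sigma_g$ the monodromy must be trivial, and nothing in a sub/supersolution iteration controls these periods. Moreover, the paper's concrete example in the ``Remark on approach'' --- a non-injective harmonic map and a harmonic diffeomorphism with identical Hopf differentials, hence identical Bochner equations --- shows that a Bochner solution by itself cannot even detect folding, let alone pin down the image. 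Your Riemann--Roch bookkeeping is correct, but in the paper that divisor count appears as a \emph{consequence}: the pole order $k+2$ is derived from the polynomial energy growth of the limit map, and the zero count then follows from Riemann--Roch; it cannot serve as the starting point of the construction.
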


\noindent Here, we say $\mathcal{P}_k$ is regular if it has a dihedral isometry group $D_k$ of order $2k$. We remark that the $g=0$ case of this theorem concerns harmonic maps $u:\C \rightarrow \HH$, and has already been established in \cite{HTTW95} for a more general target - any ideal polygon in $\HH$. Our Theorem \ref{thm:crusher_existence} generalizes to the $g>0$ domains from which a harmonic map to an ideal hyperbolic polygon can be found.

Our proof of Theorem \ref{thm:crusher_existence} is by approximation on compacta of a punctured Riemann surface. The evenness of $k$ ensures a convenient reflectional symmetry. To obtain convergence of the approximating sequence, we introduce and analyze \emph{parachute maps}, each of which is a harmonic map from a compact cylinder and each of which solves a toy harmonic mapping problem constrained by a partially free boundary condition. The regularity of $\mathcal{P}_k$ provides rotational symmetries which simplifies the analysis of the parachute maps' Hopf differentials.

The key techniques in our proof lie in the comparison of the energy of a harmonic map on a compactum of the punctured surface to the energy of a parachute map on the appropriate cylinder. We exploit the non-injectivity of the parachute map to bound the image distance in terms of the pointwise norm of its Hopf differential. We bound the Hopf differential, in turn, through studying its Laurent series.

As a by-product of this comparison to the parachute maps, we are also able to show that the energy densities for the harmonic maps from Theorem \ref{thm:crusher_existence} have polynomial growth (whose degrees determine the finite order pole and the number of sides of the image set $\mathcal{P}_k$). Furthermore, recognizing the parabolic nature (in the complex function theoretic sense) of the punctured Riemann surface domains, we also address uniqueness for these harmonic maps:

\begin{theorem} \label{thm:crusher_uniqueness} {\emph (Uniqueness)}
Suppose $h,v: \Sigma _g \rightarrow \HH$ are two harmonic maps. Denote their pointwise distance function by $d(z) := dist_{\HH} \left(h(z),v(z)\right)$. If we have $$\left(cosh\circ d\right) -1 \in \mathcal{L} ^p (\Sigma _g)$$ for some some $p\in (1, +\infty]$, then $d(z) \equiv 0$, i.e., the maps $v$ and $h$ must agree pointwise.
\end{theorem}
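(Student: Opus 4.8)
The plan is to study the single nonnegative function $f := (\cosh\circ d) - 1 = 2\sinh^2(d/2)$, which vanishes exactly where $h$ and $v$ agree, and to show that the hypotheses force $f\equiv 0$. The engine is a Bochner-type inequality: since the target $\HH$ has constant curvature $-1\le 0$ and both $h$ and $v$ are harmonic, the distance $d$ is weakly subharmonic on $\Sigma_g$ (Schoen--Yau), and because $t\mapsto \cosh t - 1$ is convex and nondecreasing on $[0,\infty)$, the composition $f$ is subharmonic as well. Crucially, subharmonicity is a conformally invariant notion in real dimension two, so the inequality ``$\Delta f\ge 0$'' is unambiguous on the Riemann surface $\Sigma_g$ and may be tested against any complete conformal metric we find convenient. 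The first step, then, is to record this inequality together with its sharp form coming from the \emph{strict} negativity of the curvature: along the geodesic homotopy between $h$ and $v$, the second-variation/curvature term is a nonnegative density $Q$, built from the energy densities of the two maps, which vanishes at an immersive point only where $d=0$. I will phrase this as $\Delta f \ge Q \ge 0$ with $Q$ vanishing, at points where the maps are nonconstant, precisely on $\{d=0\}$.

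With the inequality in hand, the second step is a Liouville argument exploiting the \emph{parabolicity} of the punctured domain. Conformally, $\Sigma_g$ is a closed surface with one point removed, hence parabolic: the puncture has zero capacity and carries a sequence of logarithmic cutoff functions whose Dirichlet energy tends to zero. For $1<p<\infty$ I will invoke Yau's $\mathcal{L}^p$ Liouville theorem---a nonnegative $\mathcal{L}^p$ subharmonic function on a complete manifold is constant---applied to $f$ on $\Sigma_g$ with the complete conformal metric underlying the stated $\mathcal{L}^p(\Sigma_g)$ norm. Concretely this is the cutoff computation in which one integrates $f^{p-1}\eta^2\,\Delta f\ge 0$ by parts and lets the logarithmic cutoff $\eta$ exhaust the surface, the $\mathcal{L}^p$ bound on $f$ absorbing the boundary contribution near the puncture and forcing $\nabla f\equiv 0$. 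For the endpoint $p=\infty$ the same parabolicity yields the classical fact that a bounded subharmonic function on a parabolic surface is constant. Either way, $f\equiv c$ for some constant $c\ge 0$.

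The final step upgrades ``$f$ constant'' to ``$f\equiv 0$'' using the sharp density $Q$. If $f\equiv c$ then $\Delta f\equiv 0$, whence $Q\equiv 0$; as the maps under consideration are nonconstant their energy densities are positive away from isolated points, so $Q\equiv 0$ forces $d\equiv 0$ and therefore $c=0$. Equivalently, when the complete conformal metric has infinite area one may simply note that a positive constant cannot lie in $\mathcal{L}^p$ for $p<\infty$. In all cases we obtain $d(z)\equiv 0$, that is, $h\equiv v$ pointwise.

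I expect the main obstacle to be the quantitative, sharp form of the Bochner inequality---pinning down the density $Q$ and verifying that its vanishing forces coincidence of the maps rather than mere flatness of the homotopy---together with the boundary analysis at the cusp that legitimizes the integration by parts, ensuring the terms produced by the logarithmic cutoffs genuinely vanish, which is where the decay of $f$ and the $\mathcal{L}^p$ hypothesis enter. The endpoint $p=\infty$ is delicate for a second reason: there the integrability alone does not exclude a positive constant distance, so the argument must route through the strict-negative-curvature rigidity of the last step rather than through finiteness of $\mathcal{L}^p$ norms.
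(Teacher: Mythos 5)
Your proposal follows essentially the same route as the paper's own proof: the same comparison function $\left(\cosh\circ d\right)-1$, subharmonicity obtained from the Hessian/Bochner computation for distance functions into a nonpositively curved target, parabolicity of the once-punctured surface to handle $p=+\infty$, and Yau's $\mathcal{L}^p$ Liouville theorem for $1<p<\infty$. If anything, your explicit final rigidity step (using the vanishing of the curvature density $Q$ to exclude a positive constant at the endpoint $p=+\infty$) is more careful than the paper's treatment, which concludes by asserting that a constant function lying in $\mathcal{L}^p$ must vanish---an assertion that carries no force when $p=+\infty$.
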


As an example application, we specialize Theorems \ref{thm:crusher_existence} and \ref{thm:crusher_uniqueness} for a square torus mapped to an ideal square. From the uniqueness provided by Theorem \ref{thm:crusher_uniqueness} and the large amount of symmetry, we are able to deduce the possibilities for the Hopf differential of the resulting harmonic map. In particular, we show that:

\begin{corollary}\label{cor:torus_crusher}
There exists a harmonic map $h$ from the punctured square torus $\Sigma _1$ to the ideal square $\mathcal{P}_4$ in $\HH$. Its Hopf differential $\Phi ^h$ has exactly one pole of order $6$ and has one of the the following three multiplicities of zeros, as depicted by the five arrangements in Figure \ref{fig:torus-to-square-hopf-possibilities}:
\begin{enumerate}
\item[(a)] three zeros of order $2$,
\item[(b)] or one zero of order $2$ and four of order $1$,
\item[(c)] or one zero of order $6$.
\end{enumerate}

\begin{figure}
  \centering
  \subfloat[]{\includegraphics[scale=.33]{./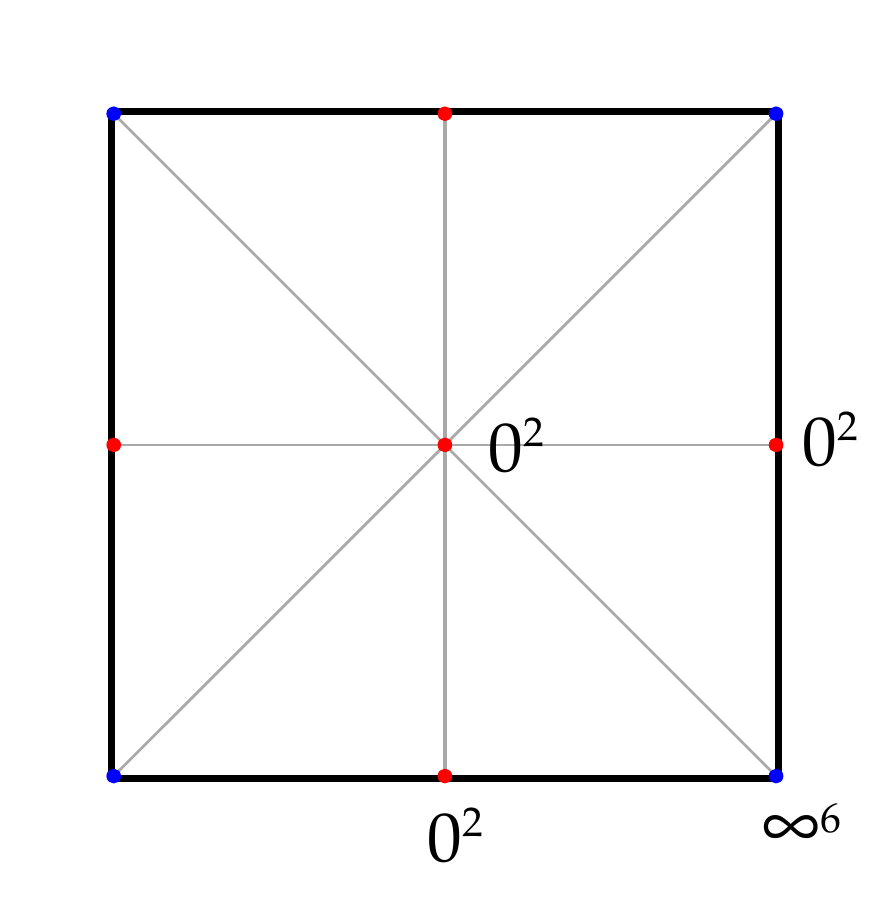}} \hskip 14pt
  \subfloat[(three configurations for a divisor with a zero of order $2$ and four simple zeros)]{\includegraphics[scale=.33]{./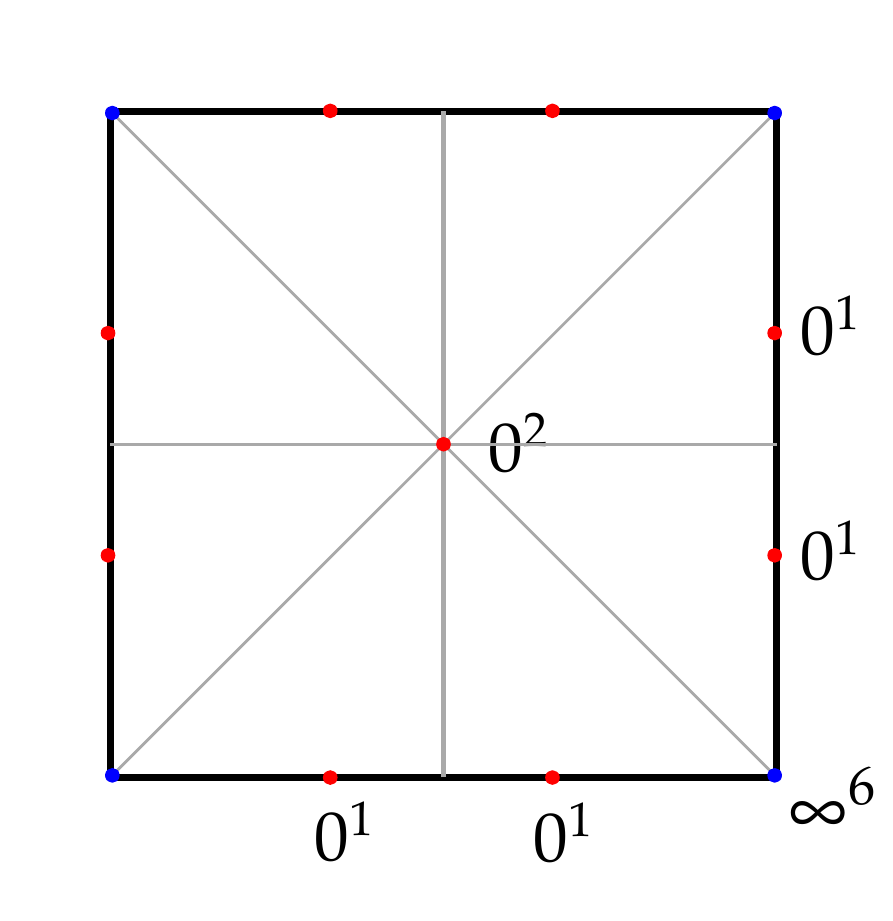}
              \includegraphics[scale=.33]{./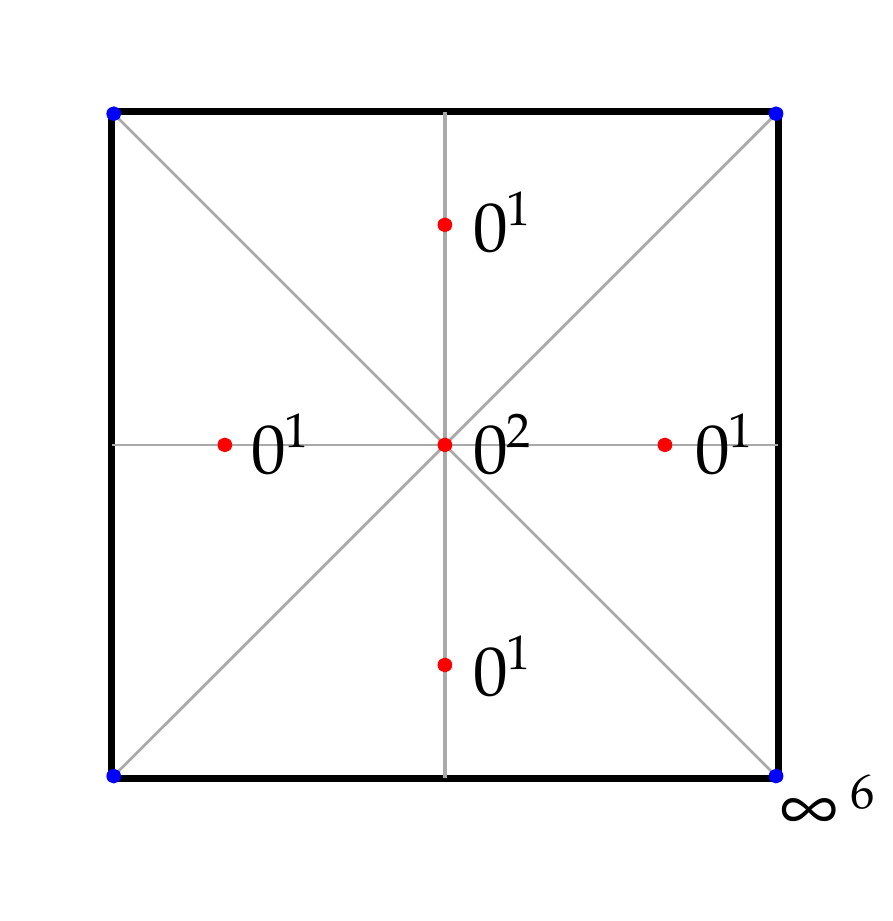}
              \includegraphics[scale=.33]{./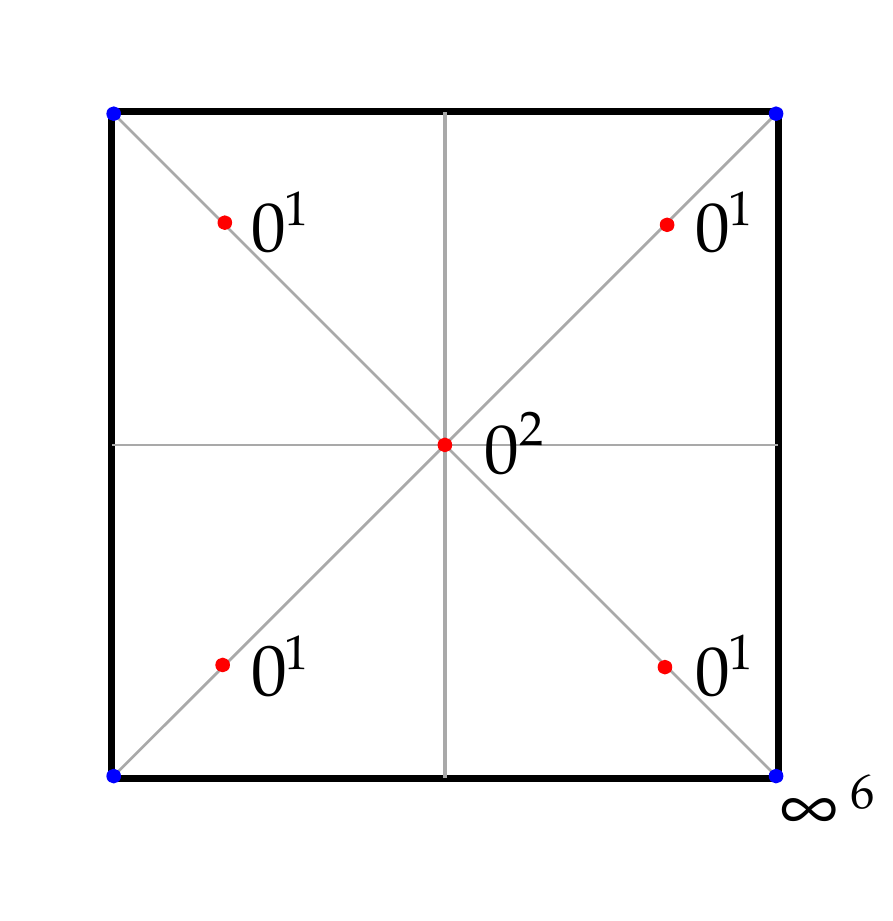}} \hskip 14pt
  \subfloat[]{\includegraphics[scale=.33]{./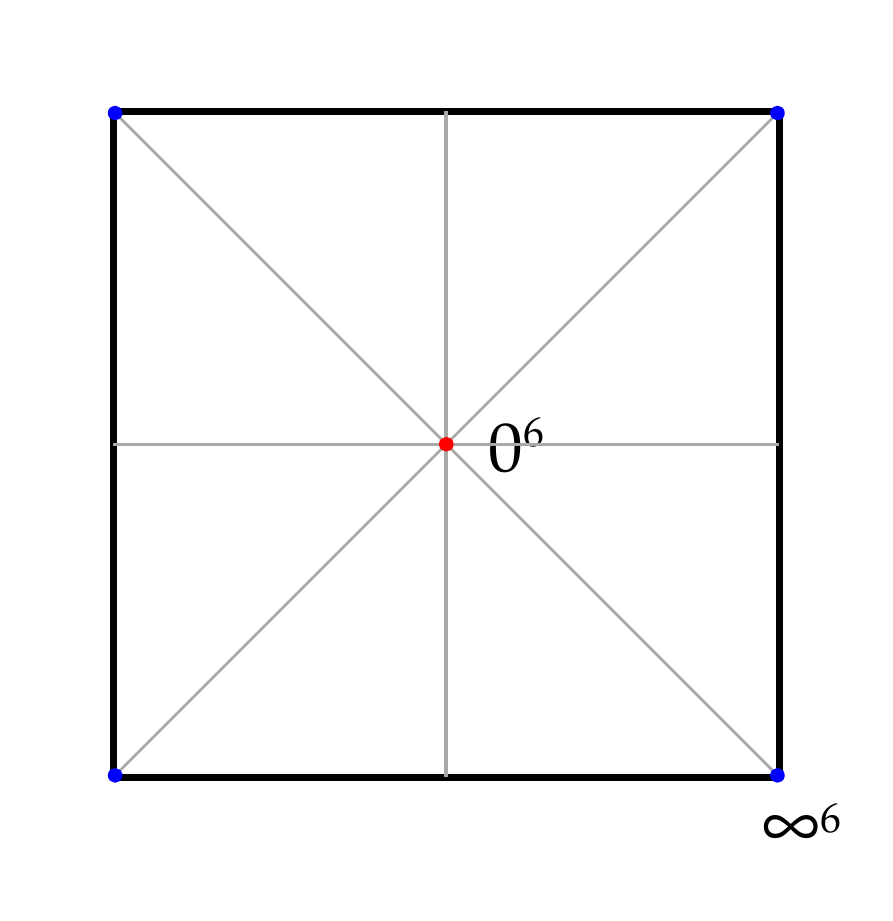}}
  \caption{\label{fig:torus-to-square-hopf-possibilities}The possible Hopf differential configurations for Corollary \ref{cor:torus_crusher}}
\end{figure}

In arrangement $(a)$, the Hopf differential is a square of a holomorphic one-form. Furthermore, two of these double zeros lie an a subset where $\mathcal{J} >0$ and orientation is preserved, while the third double zero lies on a subset where $\mathcal{J} <0$ and the orientation is reversed. Nonetheless, in the arrangements described by $(b)$ and $(c)$, the Hopf differential is {\em not} the square of a one-form.
\end{corollary}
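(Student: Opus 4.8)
The plan is to combine the existence statement of Theorem~\ref{thm:crusher_existence} with the uniqueness of Theorem~\ref{thm:crusher_uniqueness}, promoting the symmetries of the square torus and of $\mathcal{P}_4$ into an equivariance for $h$, and then reading off the divisor of $\Phi^h$ from this equivariance together with a degree count. Write the square torus as $T = \C/(\Z + i\Z)$ with the puncture placed at the order-$4$ fixed point $0$, so that $\Sigma_1 = T \setminus \{0\}$. Theorem~\ref{thm:crusher_existence} with $g=1$, $k=4$ produces $h$ and tells us that $\Phi^h$ is a meromorphic quadratic differential on $T$ with a single pole of order $6$ at $0$ and $4g+k-2 = 6$ zeros counted with multiplicity. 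Since $dz$ trivializes the canonical bundle of $T$, I would write $\Phi^h = \phi\,dz^2$ with $\phi$ an elliptic function whose divisor has degree $0$, consistent with $6$ zeros against the order-$6$ pole.

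First I would establish equivariance. Let $R(z) = iz$ be the order-$4$ rotation of $T$ fixing the puncture, and let $S$ be the rotation of $\mathcal{P}_4$ by $\pi/2$ about its center. Then $S \circ h \circ R^{-1}$ is again a harmonic map with image the interior of $\mathcal{P}_4$ and the same asymptotics at the puncture, so its pointwise distance to $h$ satisfies the hypothesis of Theorem~\ref{thm:crusher_uniqueness}; uniqueness forces $h \circ R = S \circ h$. Because $S$ is an orientation-preserving isometry it preserves Hopf differentials, so $R^*\Phi^h = \Phi^h$, which in the coordinate $z$ reads $\phi(iz) = -\phi(z)$; in particular $\phi$ is even. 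I would next examine this functional equation at the fixed points of $R$. Evaluating $\phi(iz) = -\phi(z)$ at $z = (1+i)/2$, the other order-$4$ fixed point, shows that $(1+i)/2$ must be a zero, and a local expansion forces its order to be $\equiv 2 \pmod 4$, hence $2$ or $6$. At the order-$2$ orbit $\{1/2, i/2\}$ the relation $\phi(-z)=\phi(z)$ forces even (possibly zero) vanishing order, equal at the two points since $R$ permutes them, while all remaining zeros lie on free $R$-orbits of size $4$. Thus $6 = \operatorname{ord}_{(1+i)/2} + 2\operatorname{ord}_{1/2} + 4(\cdots)$ leaves exactly the three possibilities (a), (b), (c). The reflection symmetries of $\mathcal{P}_4$, promoted to anti-holomorphic equivariances of $h$ by the same uniqueness argument, then constrain the four simple zeros in case (b) to lie on the mirror axes, which accounts for the three configurations in Figure~\ref{fig:torus-to-square-hopf-possibilities}(b).

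For the square-of-a-one-form assertions I would apply Abel's theorem on $T$. A global square root $\omega$ with $\omega^2 = \Phi^h$ is a meromorphic one-form $f\,dz$, and such $f$ exists precisely when the prescribed zeros and poles have even order and satisfy the Abel relation (sum of zeros $\equiv$ sum of poles modulo the lattice). In case (a), $\omega$ would have simple zeros at $(1+i)/2, 1/2, i/2$ and a pole of order $3$ at $0$, and indeed $(1+i)/2 + 1/2 + i/2 \equiv 0$, so $f$ exists and $\Phi^h/(f\,dz)^2$ is a nowhere-zero elliptic function, hence constant: $\Phi^h$ is a square. In case (b) the simple zeros have odd order, so no global square root exists. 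In case (c) a square root would need an order-$3$ zero at $(1+i)/2$ against the order-$3$ pole at $0$, but $3(1+i)/2 \equiv (1+i)/2 \not\equiv 0$ violates Abel's relation, so $\Phi^h$ is again not a square.

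Finally, I would determine the signs of $\mathcal{J} = \mathcal{H} - \mathcal{L}$ at the three double zeros in case (a), using $|\Phi^h|^2 = \mathcal{H}\,\mathcal{L}$: a zero of $\Phi^h$ is either a point where $\mathcal{L} = 0$ (so $\mathcal{J} > 0$, orientation preserved) or where $\mathcal{H} = 0$ (so $\mathcal{J} < 0$, orientation reversed). The two zeros in the orbit $\{1/2, i/2\}$ share a sign by equivariance, while $(1+i)/2$, which maps to the fixed center of $\mathcal{P}_4$, is the locus where the handle is folded. I expect this last step to be the main obstacle: the divisor alone does not see $\mathcal{J}$, so I must analyze the local model of $h$ at each fixed point. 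Writing $h$ in a linearizing coordinate at the center and using $h \circ R = S \circ h$, the leading term is holomorphic or anti-holomorphic according to whether the matched target rotation is $+\pi/2$ or $-\pi/2$; the handle-crushing, degree-one nature of $h$ — forcing $\int_{\Sigma_1}\mathcal{J}\,dA = \operatorname{Area}(\mathcal{P}_4) > 0$ with a compensating fold — should pin this down so that $(1+i)/2$ is the orientation-reversing ($\mathcal{J}<0$) zero and $1/2, i/2$ are the orientation-preserving ($\mathcal{J}>0$) zeros, exactly as claimed.
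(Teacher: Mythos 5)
Your overall strategy is the same as the paper's: get $h$ from Theorem \ref{thm:crusher_existence}, promote the $D_4$ symmetries to an equivariance of $h$, convert equivariance into divisor constraints, and settle the square-of-a-one-form claims on the elliptic curve. Your divisor bookkeeping in the flat model is correct and arguably cleaner than the paper's fundamental-domain count (your local-expansion proof that the order at $(1+i)/2$ is $\equiv 2 \pmod 4$ is more rigorous than the paper's appeal to the foliation, and your Abel's-theorem computations — $(1+i)/2+1/2+i/2 \equiv 0$ so (a) is a square via $\wp'\,dz$, while $3(1+i)/2 \equiv (1+i)/2 \not\equiv 0$ rules out (c) — supply detail the paper only asserts). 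However, there are two genuine gaps.

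First, your equivariance step invokes Theorem \ref{thm:crusher_uniqueness} without verifying its hypothesis: you must show $(\cosh\circ\, d)-1 \in \mathcal{L}^p(\Sigma_1)$ for $d(z)=\mathrm{dist}_{\HH}\left(h(z),\, S\circ h\circ R^{-1}(z)\right)$, and this does not follow from ``the same asymptotics at the puncture.'' The polygon $\mathcal{P}_4$ has infinite diameter, both maps are proper (they escape to the ideal boundary as $z$ approaches the puncture), and the paper establishes no quantitative asymptotics for $h$ near the puncture that would even make $d$ bounded, let alone make $\cosh d$ integrable. The paper sidesteps this entirely by putting the symmetry into the construction: each approximating map $h_s$ solves the Dirichlet problem \eqref{eqn:Ds} on a $D_4$-symmetric compactum with equivariant boundary data $w|_{\gamma_s}$ (the Scherk map with $\Phi^w=z^2\,dz^2$ is itself equivariant), so uniqueness of the Dirichlet solution on a \emph{compact} domain — exactly the argument used in Proposition \ref{prop:origin_is_real} — forces each $h_s$ to be equivariant, and equivariance passes to the limit. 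You should replace the appeal to Theorem \ref{thm:crusher_uniqueness} by this argument, or else supply the missing $\mathcal{L}^p$ estimate, which is substantially harder.

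Second, your determination of the sign of $\mathcal{J}$ at the three double zeros in arrangement (a) is, as you concede, only a heuristic, and the heuristic cannot be completed as stated: the positivity of $\int_{\Sigma_1}\mathcal{J}\,dA$ says nothing about the sign of $\mathcal{J}$ at three specified points, and is equally consistent with the fold passing through $\{1/2,\,i/2\}$ rather than through $(1+i)/2$. The paper's argument is local: equivariance under the orientation-reversing diagonal reflections (on domain and target simultaneously) leaves $\mathcal{J}$ unchanged, so the two zeros in the orbit $\{1/2,\,i/2\}$ carry the same sign; all three double zeros map to the center of $\mathcal{P}_4$ (they are fixed by $R$ or by $R^2$, hence their images are fixed by $S$ or $S^2$); and then one traces the image of the horizontal segment joining $i/2$ to $(1+i)/2$, which by symmetry is a loop based at the center lying on a symmetry axis of $\mathcal{P}_4$, so the tangential derivative of $h$ along it must reverse sign between the two endpoints, forcing opposite orientation behavior at $(1+i)/2$ versus at $i/2$ and $1/2$. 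Some such curve-tracing (or an analysis of the leading Taylor coefficients of $h$ at the two rotation-fixed points) is needed; without it your last paragraph does not identify which zero is the orientation-reversing one.
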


By the square punctured torus, we mean the conformally unique punctured torus with a dihedral $D_4$ symmetry group of order $8$ which fixes the puncture. The Scherk map is the harmonic map $w:\C \rightarrow \HH$ with Hopf differential $\Phi^w (z) = z^2 dz^2$ and whose image lies in an ideal square. It is described in more detail in Theorem \ref{thm:scherkmap}. For visualization, it is helpful to realize it as the projection of the Scherk type minimal surface in $\HH \times \R$, discovered by Nelli and Rosenberg, onto $\HH$ (c.f. section 4 of \cite{NR02}). We approximate it numerically in Figure \ref{fig:scherksurface}.

\begin{figure}
\begin{center}
\includegraphics[scale=.3]{./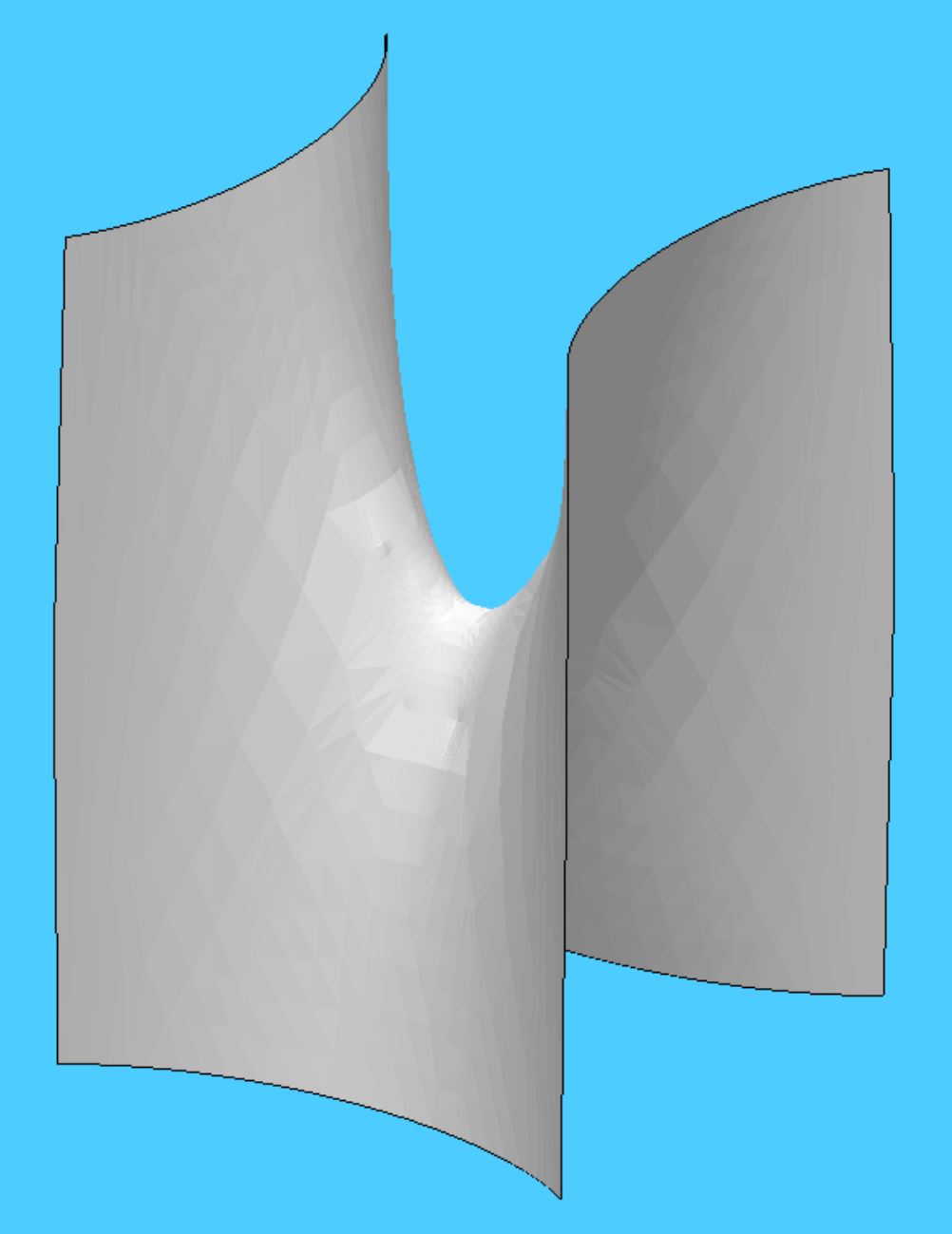}
\caption{\label{fig:scherksurface}Suspension of $u: \C\rightarrow \HH$ with $\Phi ^u (z) = z^2 dz^2$.\protect\footnotemark}
\end{center}
\end{figure}
\footnotetext{Produced using Ken Brakke's \emph{Surface Evolver} program.}

From only the Existence and Uniqueness Theorems \ref{thm:crusher_existence} and \ref{thm:crusher_uniqueness}, we will not be able to conclude precisely which of these three configurations is the unique Hopf differential realized. Nonetheless, we conjecture:
\begin{conjecture}
The Hopf differential $\Phi ^h$ of Corollary \ref{cor:torus_crusher} has divisor data described by arrangement (a).
\end{conjecture}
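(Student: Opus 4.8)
The plan is to reduce the conjecture to a single vanishing statement for $\Phi^h$ at a half-period and then attack that statement through the fold geometry of $h$. First I would upgrade the $D_4$-symmetry of the data to an exact equivariance of the map. Writing $\Sigma_1=\C/(\Z+i\Z)$ with the puncture at $0$, let $\rho:z\mapsto iz$ generate the order-$4$ rotation and let $\iota=\rho^2:z\mapsto -z$ be the elliptic involution, and for $g\in D_4$ let $\bar g\in\mathrm{Isom}(\HH)$ be the corresponding symmetry of $\mathcal{P}_4$. Then $\bar g^{-1}\circ h\circ g$ is again harmonic with image $\mathcal{P}_4$, and provided it lies in the same uniqueness class as $h$ (this is where Theorem \ref{thm:crusher_uniqueness} enters, and it requires checking that the two maps share the same exponentially decaying asymptotics at the puncture), uniqueness forces $h\circ g=\bar g\circ h$. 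Because the Hopf differential depends only on the isometry-invariant tensor $h^*\rho$, post-composition by any $\bar g$ leaves it unchanged, so equivariance under $\iota$ gives $\iota^*\Phi^h=\Phi^h$; writing $\Phi^h=\phi(z)\,dz^2$ this is exactly $\phi(-z)=\phi(z)$, i.e. $\Phi^h$ is invariant under the elliptic involution.

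This $\iota$-invariance means $\Phi^h$ descends through the degree-two quotient $\wp:\Sigma_1\to\Sigma_1/\iota=\PP^1$ (the Weierstrass map, branched at the four half-lattice points, with the puncture lying over $u=\infty$) to a meromorphic quadratic differential $\Psi$ on $\PP^1$ with $\wp^*\Psi=\Phi^h$. Tracking orders through the ramified pullback, the order-$6$ pole of $\Phi^h$ at the puncture forces a pole of order $4$ for $\Psi$ at $\infty$, while at each of the three finite branch values $e_j=\wp(p_j)$, with $p_j\in\{1/2,\,i/2,\,(1+i)/2\}$, one has $\mathrm{ord}_{e_j}\Psi=m_j-1$ where $2m_j=\mathrm{ord}_{p_j}\Phi^h$. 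Since a quadratic differential on $\PP^1$ has total degree $-4$, the only possibility with no poles away from $\infty$ is $\Psi=c\,du^2$, and then $\Phi^h=c\,(\wp')^2\,dz^2=c\,(d\wp)^2$ is the global square of the one-form $\sqrt c\,\wp'\,dz$ — precisely arrangement (a). Conversely $\Psi$ acquires a simple pole at $e_j$ exactly when $m_j=0$, i.e. when $\Phi^h$ fails to vanish at the half-period $p_j$, and this is what produces the simple zeros of (b) and the concentration of (c). Thus the conjecture is equivalent to the assertion that $\Phi^h$ vanishes at all three half-periods. The vanishing at the center $(1+i)/2$ is automatic: its stabilizer is all of $D_4$, so $dh$ there intertwines two copies of the standard two-dimensional representation, hence is either conformal or anti-conformal, forcing $\Phi^h((1+i)/2)=0$. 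By the order-$4$ symmetry it then suffices to prove $\Phi^h(1/2)=0$, i.e. that $h$ has a branch point at $1/2$.

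The obstacle is that, unlike at the center, the stabilizer of $1/2$ is only a Klein four-group, and an equivariance computation shows it does \emph{not} force $dh_{1/2}$ to degenerate; so the branching at $1/2$ is a genuinely analytic fact rather than a symmetry one. I would try to extract it from the fold structure. The edge $[\,1/2,(1+i)/2\,]$ lies on a reflection axis, both its endpoints map to the center $O$ of $\mathcal{P}_4$, and its image is confined to the geodesic fixed by the corresponding target reflection; this forces a ``there-and-back'' fold and hence a curve $\{\mathcal{J}=0\}$ separating an orientation-reversing region (containing the anti-conformal center) from the orientation-preserving region. On the symmetry-reduced fundamental triangle I would then run a Poincar\'e--Hopf count for the real line field of $\Phi^h$, using the reality of $\phi$ along the three reflection edges, the known order-$6$ pole behaviour at the puncture, and the fact that the zeros of $\partial h$ and of $\bar\partial h$ are isolated with strictly positive index (via the Bers--Vekua similarity principle applied to $h_{z\bar z}+(\log\rho)_h\,h_z h_{\bar z}=0$), with the aim of showing the count leaves no room for the branch point to avoid $1/2$.

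As an alternative I would anchor a continuity argument at the genus-$0$ Scherk map $w:\C\to\mathcal{P}_4$ of Theorem \ref{thm:scherkmap}, whose Hopf differential $z^2\,dz^2=(z\,dz)^2$ is a square. Pinching a cycle of $\Sigma_1$ produces a family of harmonic maps $h_t$ degenerating to $w$, and orientability of the horizontal foliation of $\Phi^{h_t}$ is a locally constant $\Z/2$-invariant that can only change when an odd-order zero is created; so if orientability persists along the family, then every $h_t$, including $h$, realizes arrangement (a). The crux — and, I suspect, the reason the statement remains a conjecture — is exactly to forbid the two symmetry-compatible degenerations along this family: a double zero at $1/2$ splitting into a $\rho$-orbit of four simple zeros that migrate off the half-periods (giving (b)), or the half-period zeros colliding into the center to form a single order-$6$ zero (giving (c)). Neither is excluded by $D_4$-symmetry, so ruling them out requires a quantitative input: a lower bound on the separation of the zeros of $\Phi^{h_t}$ (equivalently, of the branch points of $h_t$) uniform in $t$. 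I expect establishing such a uniform estimate — presumably via a monotonicity property of the Laurent coefficients of the Hopf differential obtained from the parachute-map energy comparison — to be the main difficulty.
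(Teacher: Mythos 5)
You have correctly recognized what the paper itself concedes: this statement is an open conjecture, not a theorem, and the paper offers no proof of it at all --- only the numerical evidence of the approximate minimal suspension in Figures \ref{fig:chen-gack-H2R} and \ref{fig:foliation-torus_crusher_a}, together with the conditional Corollary \ref{cor:chen-gack-H2R}. Against that backdrop, your reduction is correct and genuinely sharpens the problem. The $D_4$-equivariance you want is already available from the construction (each $h_s$ is equivariant by uniqueness of the Dirichlet solution to \eqref{eqn:Ds}, and equivariance passes to the limit), so you do not even need to verify the $\mathcal{L}^p$ hypothesis of Theorem \ref{thm:crusher_uniqueness}. Granting that, your bookkeeping checks out: $\iota$-invariance makes $\phi$ even about each half-period, the descent through the Weierstrass quotient gives $\mathrm{ord}_{e_j}\Psi = m_j-1$ with a fourth-order pole at $\infty$, and the conjecture is equivalent to $\Psi = c\,du^2$, i.e.\ to the single vanishing statement $\Phi^h(1/2)=0$; vanishing at the center $(1+i)/2$ is indeed automatic from the rotational symmetry alone (this is exactly the paper's constraint that $n_S\in\{2,6\}$). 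One remark: the printed tuples for (b1)--(b3) in the paper's proof of Corollary \ref{cor:torus_crusher} are inconsistent with the constraint $4n_P+4n_Q+2n_R+n_S+4n_T=6$ (they give total $8$); the consistent reading, which your reduction implicitly uses, is that in arrangements (b) and (c) the double or sextuple zero sits at the center and the half-periods $1/2,\,i/2$ are \emph{not} zeros.

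The gap --- which, to your credit, you flag yourself --- is that neither of your two proposed attacks actually establishes $\Phi^h(1/2)=0$. The Poincar\'e--Hopf count on the fundamental triangle can only use the inputs you list: reality of $\phi$ along the reflection edges, the order-$6$ pole at the puncture, and positivity of the indices of the isolated zeros of $\partial h$ and $\bar\partial h$. But these are precisely the inputs from which the paper derives the Diophantine count $4n_P+4n_Q+2n_R+n_S+4n_T=6$, and that count is exactly what produces all five arrangements; so an index argument of this type provably cannot ``leave no room'' for (b) and (c). Likewise, the fold curve $\{\mathcal{J}=0\}$ you extract from the edge $[1/2,(1+i)/2]$ exists in every arrangement (the there-and-back behavior on the reflection axis is forced by symmetry regardless of where the branch points lie), so it carries no information distinguishing (a). The continuity argument fares no better as stated: it presupposes a pinching family $h_t$ degenerating to the Scherk map, which is itself one of the open Questions in the paper's concluding section, and the uniform lower bound on the separation of zeros of $\Phi^{h_t}$ is asserted as ``expected,'' not proved --- it is exactly the missing quantitative content. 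So the honest summary is: a correct and useful equivalent reformulation (branch point at the half-period $1/2$, equivalently $\Psi=c\,du^2$ downstairs), but no proof; the statement remains open after your argument, consistent with its status in the paper.
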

Observe that this configuration has a pole of order 6 at the puncture, fixed by the $D_4$ symmetry of the square torus, and three zeros of order 2, each on a line of reflectional symmetry. Thus, we can express $\Phi ^h$ in terms of the Weierstrass $\wp$ and $\wp '$ functions, and in fact as a square of a holomorphic $1$-form. Then, we have:

\begin{corollary}\label{cor:chen-gack-H2R}
If the Hopf differential $\Phi^h$ of the map $h:\Sigma _1 \rightarrow \HH$ has divisor data as in configuration (a) described in Corollary \ref{cor:torus_crusher}, then there exists a harmonic function $f:\Sigma _1 \rightarrow \R$ for which $\Phi ^h =- \Phi ^f $. In particular, this provides a minimal suspension $\tilde{h}:=(h, f):\Sigma _1 \rightarrow \HH \times \R$ with $\tilde{h}(\Sigma _1)$ a genus one minimal surface in $\HH \times \R$.
\end{corollary}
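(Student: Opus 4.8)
The plan is to recognize the desired object $\tilde h = (h,f)$ as a conformal harmonic map and to build the missing $\R$-component $f$ by a Weierstrass-type integration. Recall the standard correspondence for minimal surfaces in a Riemannian product: a map $\tilde h = (h,f): \Sigma_1 \to \HH \times \R$ is a (branched) minimal immersion precisely when it is both harmonic and conformal. Harmonicity decouples across the product, so it amounts to $h$ being harmonic (which is given) together with $f$ being a harmonic function; conformality is the vanishing of the total Hopf differential. Writing $\Phi^f = (\partial f)^2 = (f_z\,dz)^2$ for the $(2,0)$-part of $df \otimes df$, the pullback $\tilde h^*(g_{\HH} + dt^2) = h^* g_{\HH} + df \otimes df$ has $(2,0)$-component $\Phi^h + \Phi^f$. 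Hence producing the minimal suspension reduces entirely to finding a single-valued real harmonic function $f$ on $\Sigma_1$ with $\Phi^f = -\Phi^h$.

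Next I would exploit configuration (a). By hypothesis $\Phi^h$ is the square of a holomorphic one-form $\omega$ on the punctured torus $\Sigma_1$, and the $D_4$-symmetry together with the divisor data (a pole of order $3$ at the puncture and three simple zeros on the reflection axes) identifies $\omega$, up to a constant, as $\wp'(z)\,dz$; here $\wp$ is the Weierstrass function of the square lattice, for which $g_3 = 0$. Indeed $\wp'\,dz$ has a pole of order $3$ at the origin and simple zeros at the three half-periods, so $(\wp'\,dz)^2$ has exactly the divisor of $\Phi^h$ from Corollary \ref{cor:torus_crusher}. The equation $\Phi^f = -\Phi^h = (i\omega)^2$ then tells me to solve $\partial f = i\omega$.

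The observation that makes this solvable is that $\omega$ is \emph{exact} on $\Sigma_1$: since $\omega$ is a constant multiple of $\wp'\,dz = d\wp$ and $\wp$ is elliptic, $\omega$ (and hence $i\omega$) has a single-valued primitive on $\Sigma_1$; equivalently, all of its periods over the homology of the punctured torus vanish, including the loop about the puncture, because $\wp'$ has vanishing residue at the origin. Let $F$ be the resulting single-valued holomorphic function on $\Sigma_1$ with $F'\,dz = i\omega$, and set $f := 2\,\mathrm{Re}\,F$. Then $f$ is a single-valued real harmonic function (the real part of a holomorphic function, with ellipticity of $\wp$ guaranteeing single-valuedness and the pole at the puncture as the only singularity), and $\partial f = F'\,dz = i\omega$, so $\Phi^f = (i\omega)^2 = -\omega^2 = -\Phi^h$. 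Concretely, $f$ is a constant multiple of $\mathrm{Im}\,\wp$.

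Finally I would assemble the conclusion: with $h$ harmonic by hypothesis and $f$ harmonic by construction, $\tilde h = (h,f)$ is harmonic, and $\Phi^{\tilde h} = \Phi^h + \Phi^f = 0$ makes it conformal; a conformal harmonic map is a (branched) minimal immersion, so its image $\tilde h(\Sigma_1)$ is a genus-one minimal surface in $\HH \times \R$. The one genuine obstacle in this argument is the single-valuedness of $f$, i.e.\ the vanishing of the real periods of $i\omega$; this is exactly where the perfect-square hypothesis of configuration (a), refined to $\omega \propto \wp'\,dz$, is essential, and it is the ellipticity of $\wp$ that supplies it. A secondary point worth checking is the immersion property: since $df = 2\,\mathrm{Re}(i\omega)$ vanishes only at the zeros of $\omega$, namely the three half-periods, any branch points of $\tilde h$ are confined to these points, where a local analysis of $dh$ decides whether the suspension is immersed or merely branched there.
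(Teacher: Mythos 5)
Your proposal is correct and takes essentially the same route the paper itself sketches: configuration (a) forces $\Phi^h = c\,(\wp'\,dz)^2$ on the square lattice, and the minimal suspension is obtained (following the cited construction of Wolf) by pairing $h$ with a real harmonic function whose Hopf differential is $-\Phi^h$. Your key observation that $\wp'\,dz = d\wp$ is exact, so that $F = ia\wp$ is single-valued and $f = 2\,\mathrm{Re}(ia\wp)$ is a well-defined harmonic function on the punctured torus, is exactly the period-vanishing detail the paper delegates to its reference, and it is the right one (the only quibble is the aside that $f \propto \mathrm{Im}\,\wp$, which presumes $c>0$; reflection symmetry makes $c$ real, but for $c<0$ one gets a multiple of $\mathrm{Re}\,\wp$ instead, with no effect on the statement).
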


The construction of a \emph{minimal suspension} $\tilde{h}$ is directly adapted from \cite{Wolf98}. We numerically approximate a compact subset of $\tilde{h}(\Sigma _1)$ in Figure \ref{fig:chen-gack-H2R}. A depiction of the horizontal and vertical foliations is produced in Figure \ref{fig:foliation-torus_crusher_a}. This approximation of the minimal suspension provides numerical evidence for configuration (a).

\begin{figure}
  \centering
  {\includegraphics[scale=.3]{./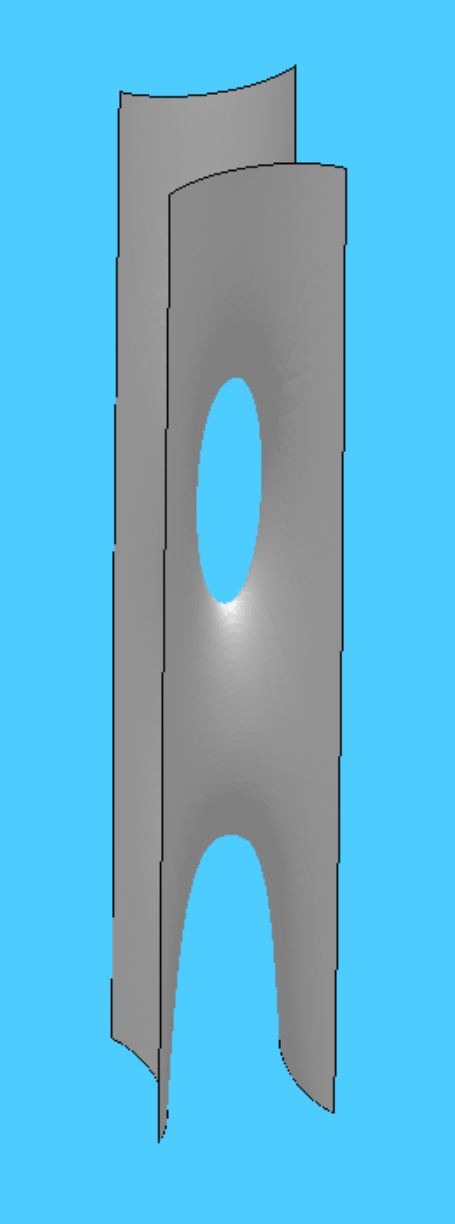}}\caption{\label{fig:chen-gack-H2R}
  Approximate minimal suspension of map from Corollary \ref{cor:chen-gack-H2R}.\protect\footnotemark}
\end{figure}
\footnotetext{Produced using Ken Brakke's \emph{Surface Evolver} program.}

\begin{figure}
  \centering
  \subfloat [$\mathcal{F}_\mathcal{H}$]
  {\includegraphics[scale=.25]{./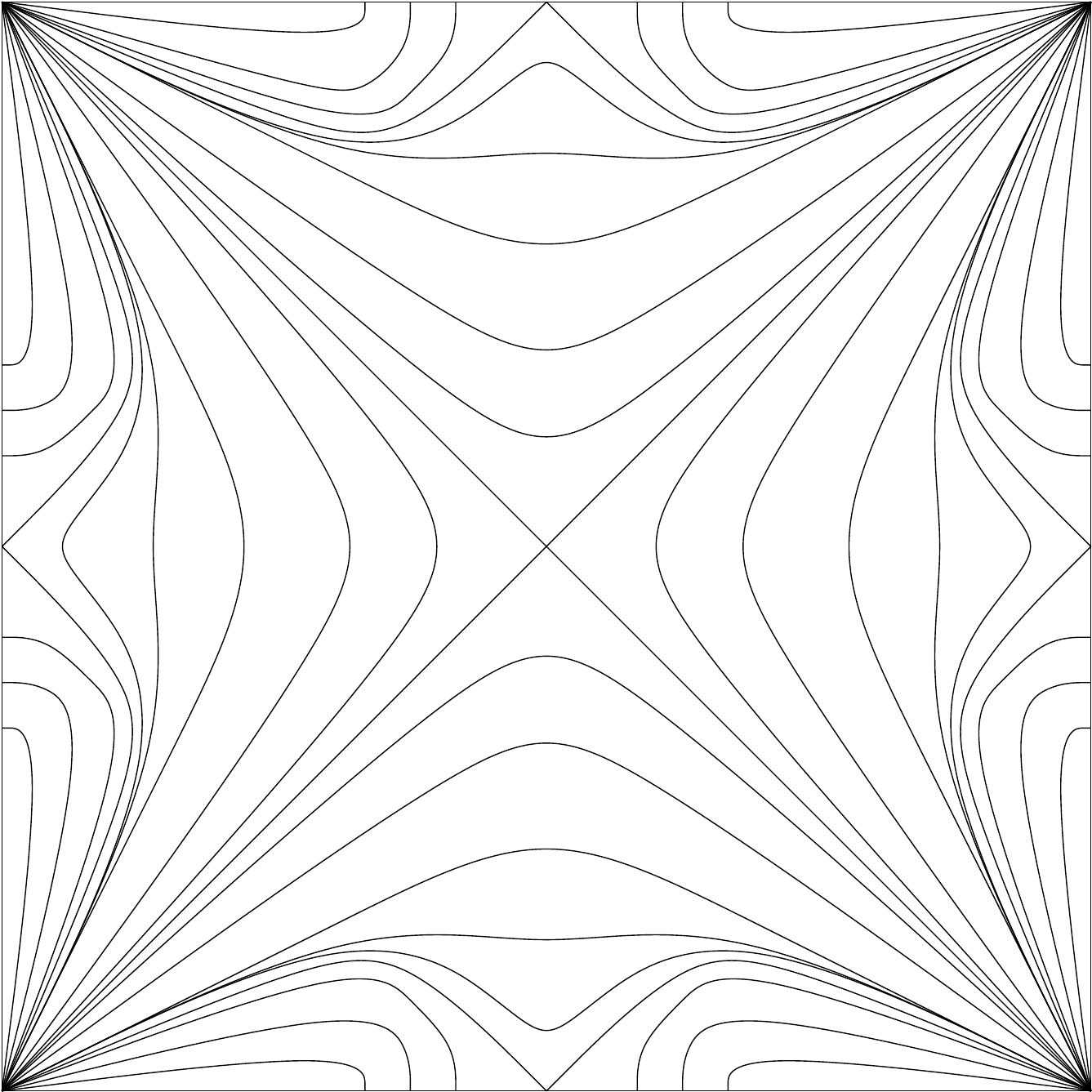}} \hskip 36pt
  \subfloat[$\mathcal{F}_\mathcal{V}$]
  {\includegraphics[scale=.25]{./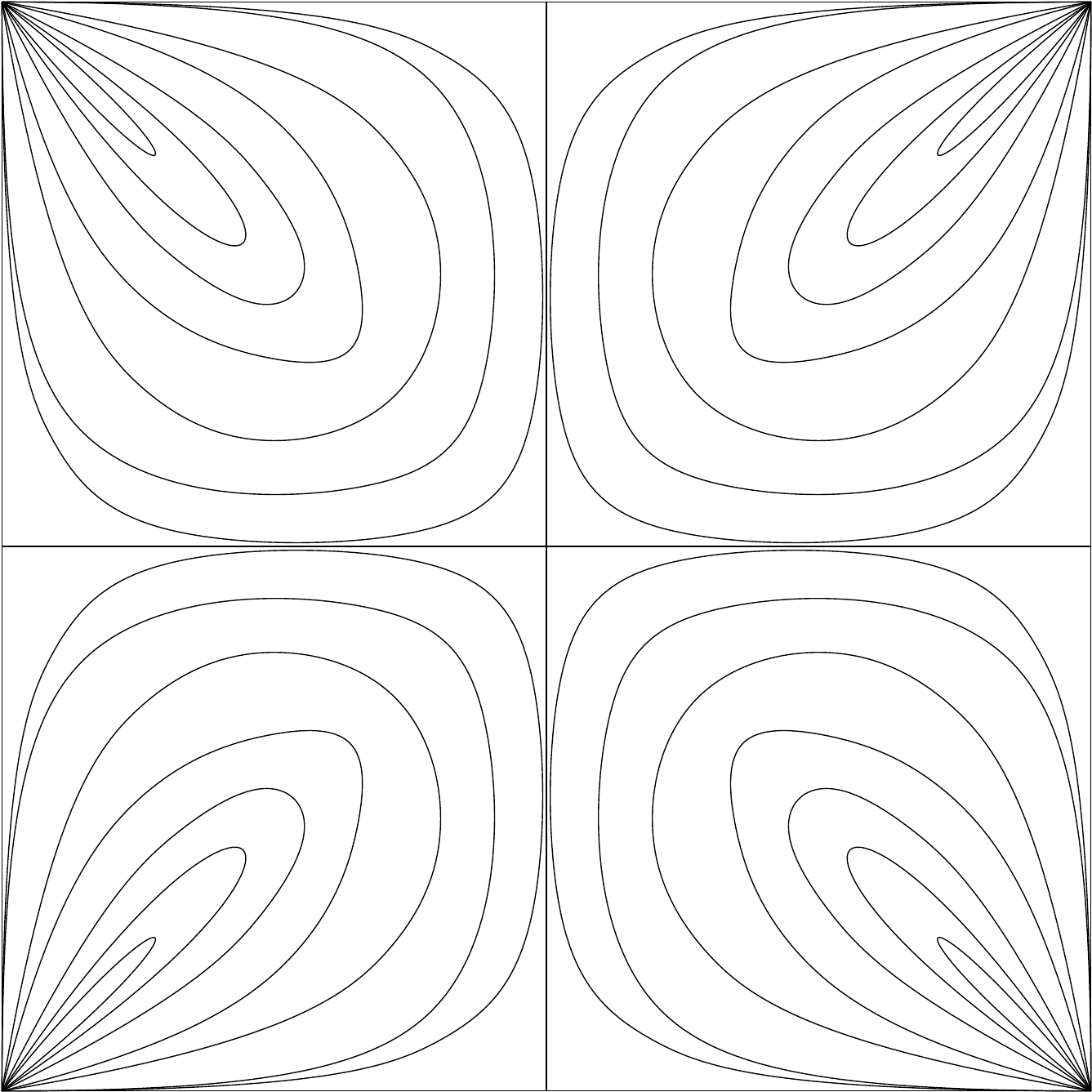}}  \caption{\label{fig:foliation-torus_crusher_a}
  Arrangement (a) horizontal and vertical foliations for $\Phi^h$ on $\Sigma _1$}
\end{figure}

\subsection{Background}
We first describe the development of the theory of harmonic maps from Riemann surfaces, in order to frame the peculiarities of our problem and also put our theorems in context.

\subsubsection{Surface harmonic mapping history}

Harmonic maps between compact Riemann surfaces of the same topological complexity have been well studied.
They are well-behaved - for example, such harmonic maps which are isotopic to the identity are in fact diffeomorphisms (\cite{SY78}, \cite{Sampson78}).

Harmonic maps between surfaces of the same genus have also found applications in Teichm\"uller theory. Starting with a harmonic diffeomorphism between Riemann surfaces, smoothly deforming the target Riemann surface structure induces a smooth variation of the harmonic map \cite{EL81}. Wolf showed that, under these deformations, the associated Hopf differential on the domain Riemann surface is uniquely determined \cite{Wolf89} and, furthermore, that the Teichm\"uller space $\mathcal{T}_g$ of a genus $g$ compact Riemann surface is homeomorphic to the vector space $QD(\Sigma _g)$ of holomorphic quadratic differentials on the fixed Riemann surface domain. Preceding this, Sampson observed that the mapping of the target hyperbolic metric to its corresponding Hopf differential is injective \cite{Sampson78}, and Hitchin constructed hyperbolic metrics for each holomorphic quadratic differential \cite{hitchin87}.

For harmonic maps between surfaces of different genera, less has been said. At a local scale, Wood gave a classification of admissible folds and cusps for harmonic maps between compact surfaces through a description of the admissible germs of a harmonic map \cite{Wood77paper}. On a global scale, some examples have been obtained. Non-holomorphic harmonic maps from positive genus surfaces to the sphere $\C P ^1$ have been produced by cutting and gluing methods \cite{Lemaire78}. A qualitative example of a harmonic map from a genus two surface to the torus $\C / \mathbb{Z}^2$ was described by integrating a pair of Abelian differentials with appropriate periods \cite{Wood74thesis}. Less is known for harmonic maps between compact negatively curved Riemann surfaces of different genera.

Somewhat tangentially, harmonic maps from $\C$ to $\HH$ have recently received a lot of attention \cite{wan92}, \cite{WA94}, \cite{HTTW95}, \cite{ST02}. Sparked by Schoen's question on the nonexistence of a harmonic diffeomorphism $\C \rightarrow \HH$ (see \cite{Schoen93}), considerable effort has been put toward studying the Bochner equation on $\C$, a semi-linear elliptic partial differential equation necessarily satisfied by the holomorphic energy density of a harmonic map from $\C$. It is remarkable that furnishing a solution to this equation satisfying some geometric constraints provides sufficient data to produce a harmonic diffeomorphismof $\C$ into an ideal polygon of $\HH$, called the \emph{Scherk maps}, through the methods of \cite{WA94}, \cite{HTTW95}, and \cite{ST02}.

This produces for us many examples of harmonic diffeomorphisms from a simply connected domain onto negatively curved surfaces. In a sense, these maps can be seen as local models for harmonic maps from a punctured sphere to a negatively curved target which are only diffemorphisms. Our task is to extend these models to produce handle collapses, harmonically mapping a punctured Riemann surface to the hyperbolic plane.

This paper establishes the existence and uniqueness of harmonic maps from once-punctured Riemann surfaces with arbitrary but finite topology to the interior of a regular ideal polygon in the hyperbolic plane. The existence of these maps can be viewed both as an extension and application of the harmonic mapping theory of $\C \rightarrow \HH$. On the one hand, we generalize the punctured sphere domain $\C$ to a punctured Riemann surface domain $\Sigma$ with positive genus. On the other hand, the construction will involve a local blow-up argument; this blow-up forces us to analyze the behavior of these $\C \rightarrow \HH$ Scherk maps.

\subsubsection{Remark on approach}

As mentioned, a common approach to exhibiting the existence of a harmonic diffeomorphism from a planar domain is through constructing solutions to the Bochner equation. It is worth noting that the Bochner equation on a punctured Riemann surface has many solutions. The development of these solutions to produce folding, non-injective harmonic maps is not straight-forward, since it is not clear where and how to induce the folding.

For a concrete example of this phenomenon and difficulty, it is enough to compare two harmonic maps whose holomorphic energy densities solve the same Bochner equation: Fix a Riemann surface domain $\Sigma _g$. Choose any harmonic map $v:\Sigma _g \rightarrow \Sigma _h$ which is non-injective, and denote its Hopf differential by $\Phi ^v$. Since $QD(\Sigma _g)\approx \mathcal{T}_g$, there exists a Riemann surface $\Sigma _g ^{'}$ also of genus $g$ and for which there exists a harmonic diffeomorphism $u:\Sigma _g \rightarrow \Sigma _g ^{'}$ with Hopf differential $\Phi ^u \equiv \Phi ^v$. Since the Hopf differentials agree, their Bochner equations are the same, so we cannot foresee the folding behavior from the information of the holomorphic energy densities (the solutions to the Bochner equation) alone.

So, we abandon the approach of solving for and developing a solution of the Bochner equation, and choose instead the approach for proving existence by compact exhaustion. The main difficulties arise in estimating the energy of the harmonic maps on compacta from above and below, along a sequence of compact exhaustion. Our energy estimates of the handle crushing harmonic map on a compactum revolves around comparing two other harmonic maps to each other - the Scherk maps and the \emph{parachute maps} (introduced in \S\ref{section:toyproblem}).

Heuristically, we describe the reasoning of our approach as follows: We remove a disk from the Scherk map (which is to say: an open disk of $\C$ and its image under the Scherk map), and deform this mapping of an annulus to the unique energy minimizer in its homotopy class with the same ideal boundary conditions (the parachute map). We show that the boundary of the removed disk does not need to move very far relative to its image under the Scherk map. This suggests that, if we replaced the disk with a compact surface with non-trivial topology, the map would be nearly harmonic.

\subsubsection{Remark on technique}
This paper presents at least four methods for dealing with proper non-injective harmonic maps from non-compact surfaces:
\begin{enumerate}
\item To obtain a lower bound on the energy of a harmonic map on a compact subset, we observe that a partially-free boundary value harmonic mapping problem solution has less energy than its completely Dirichlet boundary value harmonic mapping problem counterpart (see the Doubling Lemma \ref{lem:doublinglemma}).
\item To estimate the length of the image of the free boundary for harmonic maps solving a partially-free boundary value problem, we relate the pointwise Hopf differential norm on the free boundary to the energy density on the free boundary (see Proposition \ref{prop:qd-equals-core-energy}).
\end{enumerate}
In our proof, we need to show that a sequence of harmonic mappings from a sequence of nested compact domains to a regular ideal polygon of $\HH$ (solving a sequence of Dirichlet boundary value problems) have locally bounded Hopf differentials. We achieve this through an analysis of the Laurent series expansions for their sequence of Hopf differentials (see Proposition \ref{prop:bounded-qd-subseqn}):
\begin{enumerate}
\item[3.] We bound the infinite tails of the Laurent series (in norm) by applying the Cauchy integral formula along the prescribed Dirichlet boundaries. This uniformly bounds all but finitely many coefficients, so that divergence of the Hopf differentials could only occur by divergence of these central coefficients.
\item[4.] We control the central terms of the Laurent series using a blow-up argument on large disks in the Hopf differential norm.
\end{enumerate}

Using these methods, we are able to compare the energy of a harmonic map on a compactum of the punctured Riemann surface to the difference between energy of two different harmonic maps, the Scherk map and the parachute map, on an \emph{annular} domain. We are able to bound this difference because the energy of the parachute map can be estimated in terms of its energy density on the free boundary, which in turn we are able to bound by showing that the Laurent series for its Hopf differential is bounded.

\subsection{Organization of paper}
Throughout the paper, we will assume that $g>1$ and $k>2$ an even positive integer have been fixed.

In \S\ref{section:preliminaries}, we recall definitions and previous results on harmonic maps.

In \S\ref{section:construction}, we prove Theorem \ref{thm:crusher_existence} (Existence). This section begins with a brief outline of the proof. We will assume the Energy Estimate (Lemma \ref{lem:energyestimate}) and the results on harmonic maps recalled in \S\ref{section:preliminaries}. The Energy Estimate will be proven in \S\ref{section:toyproblem},

In \S\ref{section:uniqueness}, we prove Theorem \ref{thm:crusher_uniqueness} (Uniqueness), describing the senses in which the harmonic maps from Theorem \ref{thm:crusher_existence} are unique.

In \S\ref{section:square-example}, we specialize Theorems \ref{thm:crusher_existence} and \ref{thm:crusher_uniqueness} to the case of a punctured square torus mapped to an ideal square. We investigate the Hopf differential of this map, deducing Corollary \ref{cor:torus_crusher}.

In \S\ref{section:toyproblem},  we study a toy problem: does there exist a symmetric harmonic map from an infinite cylinder to the hyperbolic plane? After a brief outline, we pose and solve this problem, whose solutions we call \emph{parachute maps}. The section closes with a derivation of the Energy Estimate (\ref{lem:energyestimate}).


\section{Preliminaries and notation}\label{section:preliminaries}
In this section, we recall basic definitions and collect some results on harmonic maps for later use. For these results, we expound on the relevant background and also briefly mention the context of our application. We also include a glossary of the notation we will utilize throughout this discussion.

\subsection{Definitions}
Let $(M^2,\sigma)$ be a Riemann surface and $(N^n,\rho)$ be a smooth Riemannian $n$-manifold. Let $u:(M,\sigma)\rightarrow (N,\rho)$ be a $\mathcal{C}^{\infty}$ mapping. The \emph{energy} of the map $u$ is defined as
$$ E_M(u):= \int_{M} ||Du||_{TM\tensor u^{*}(TN)} ^2 \sigma (z) \, dz \, d\zbar.$$
Critical points of this energy functional are characterized by its Euler-Lagrange equation, a system of elliptic semi-linear partial differential equations which form the components of the \emph{tension field} of $u$:
$$\tau _u (z) \equiv 0.$$
Adopting a local complex coordinate $z\in M$, the tension field for $u$ reduces to
$$ \tau _u (z) = u_{z \zbar} (z) + (log \rho) _z u_z u_{\zbar}.$$
We call these critical points \emph{harmonic maps}.

The Riemann surface structure on $(M^2, \sigma)$ defines a natural complex structure $z$ on $M$. Using this complex structure, the boundary operator $d:T^* M \rightarrow T^*M$ on the deRham cohomology of $M$ decomposes as $$d = dz\wedge  \nabla_{\partial _z} + d\zbar \wedge \nabla_{\partial _{\zbar}}.$$ This allows us to decompose the vector bundles of symmetric tensors $M$ into \emph{types}. In particular, we have the decomposition $$ T^*M \odot T^* M = \langle dz \odot dz \rangle \oplus \langle dz \odot d\zbar \rangle  \oplus \langle d\zbar \odot d\zbar \rangle$$ into the types $(2,0)$, $(1,1)$, and $(0,2)$, where $\langle dz \odot dz \rangle$ denotes the $\mathcal{C}^{\infty}(M)$ span of $dz \odot dz$ in $T^*M \odot T^*M$, etc. Note that a $(2,0)$ tensor field is not necessarily holomorphic (for example, $f(\zbar)dz ^2$ is not holomorphic for any $f\neq 0$ holomorphic with respect to $z$).

The \emph{Hopf differential} $\Phi ^u$ of $u\in \mathcal{C}^{\infty}(M^2, N^2)$ is the $(2,0)$-part of the pullback metric:
$$ \Phi ^u (z) := (u^*(\rho))^{(2,0)}.$$
The holomorphic and anti-holomorphic energies, $\mathcal{H}^u$ and $\mathcal{L}^u$, of the map $u$ are given by
\begin{align*}
\mathcal{H} ^u :=||\partial u || ^2 &= \frac{\rho}{\sigma} |u_z|^2 \\
\mathcal{L}^u := ||\overline{\partial} u || ^2 &= \frac{\rho}{\sigma} |u_{\zbar}|^2
\end{align*}
Using a complex coordinate $z$ on $M$, we have the following formulae:
\begin{align*}
e^u & =\mathcal{H}^u + \mathcal{L}^u\\
\mathcal{J}^u & =\mathcal{H}^u - \mathcal{L}^u\\
||\Phi ^u ||^2 &= \mathcal{H}^u \cdot \mathcal{L}^u \cdot \sigma\\
\Delta _M &\equiv \frac{4}{\sigma} \frac{\partial ^2}{\partial z \, \partial \zbar}\\
K^M &= -\frac{1}{2} \Delta _M log (\sigma) \\
K^N &= -\frac{1}{2} \Delta _N log (\rho)
\end{align*}
These curvatures are related to the holomorphic and anti-holomorphic energies of a harmonic map. Recall that $\mathcal{J}^u = \mathcal{H}^u - \mathcal{L}^u$. We have the following \emph{Bochner formulae for harmonic maps}:
\begin{align*}
\Delta _M \log \left(\mathcal{H}^u\right) & = -K^N \mathcal{J}^u + K^M\\
\Delta _N \log \left( \mathcal{L}^u\right)&= K^N \mathcal{J}^u + K^M
\end{align*}

We remark that \emph{orientation preserving} harmonic maps $u$ can be analyzed a priori through their Bochner equations by imposing the ansatz $\mathcal{J}^u \geq 0$. Using the change of variables $w=\frac{1}{2}\log \left(\mathcal{H}^u\right)$, this yields a desirable sign on the holomorphic energy Bochner equation
$$ \Delta _{M} w = e^{2w} - ||\Phi||^2 e^{-2w}$$
so that a version of the Maximum Principle holds. This Maximum Principle has been widely applied (c.f. \cite{SY78}, \cite{Sampson78}, \cite{Wolf89}, \cite{WA94}, \cite{TD13}). Our handle crushing harmonic maps cannot be orientation preserving, so we have to develop different techniques for their analysis.

\subsection{Previous results on harmonic maps}
Here, we collect previous results on harmonic maps which we will apply in our proof of Theorem \ref{thm:crusher_existence}. 

\subsubsection{Harmonic maps $\C \rightarrow \HH$}
The Scherk maps are the basis for our investigation. We will use them to provide appropriate boundary conditions for the harmonic maps on compacta $\Sigma _s$ of the punctured Riemann surface $\Sigma$.

\begin{theorem}\label{thm:scherkmap}\emph{(Scherk maps)}
Let $\Phi$ be a polynomial holomorphic quadratic differential of degree $k$ on $\C$. If $\Phi$ is not identically zero, then there exists a unique $\mathcal{C}^{\infty}$ solution to $$\Delta W = e^{2W} - |\Phi|^2 e^{-2W}$$ for which $e^{2W}-|\Phi|^2 e^{-2W} \geq 0$ and $e^{2W} |dz|^2$ is a complete Riemannian metric on $\C$. Furthermore, these solutions are in bijection with harmonic maps $w:\C \rightarrow \HH$ for which the Hopf differential $\Phi_w$ of $w$ is identically $\Phi$. Furthermore, $w(\C)$ forms the interior of an ideal $(k+2)$-gon.
\end{theorem}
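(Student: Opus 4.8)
The plan is to read the displayed PDE as the \emph{Bochner equation} governing the holomorphic energy density of an orientation-preserving harmonic map into $\HH$, and to treat the two halves of the claimed bijection separately before analyzing the image. Setting $\mathcal{H}=e^{2W}$ and using $\|\Phi\|^2=\mathcal{H}\mathcal{L}\sigma$ with the flat domain metric $\sigma\equiv 1$, we get $\mathcal{L}=|\Phi|^2 e^{-2W}$, so the equation is precisely the first Bochner formula of \S\ref{section:preliminaries} specialized to $K^N=-1$ and $K^M=0$, while the sign condition $e^{2W}-|\Phi|^2 e^{-2W}=\mathcal{J}\geq 0$ encodes orientation preservation. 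The direction (harmonic map) $\mapsto$ (solution) is then immediate: given a $w$ with Hopf differential $\Phi$, its holomorphic energy produces a $W$ solving the equation, the sign condition holds because such a $w$ is orientation preserving, and the metric $e^{2W}|dz|^2$ is complete because $\mathcal{H}$ grows at infinity comparably to $|\Phi|\sim|z|^k$, as the asymptotics below will confirm.

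For existence of $W$ I would use the sub/supersolution method along an exhaustion of $\C$ by disks $D_n$. The function $\tfrac12\log|\Phi|$ is a subsolution: it makes the right-hand side $f(\tfrac12\log|\Phi|)=|\Phi|-|\Phi|^2/|\Phi|=0$ vanish, while $\Delta(\tfrac12\log|\Phi|)\geq 0$ is a nonnegative sum of Dirac masses at the zeros of $\Phi$, so $\Delta(\tfrac12\log|\Phi|)\geq f(\tfrac12\log|\Phi|)$ weakly. Since the nonlinearity $f(W)=e^{2W}-|\Phi|^2 e^{-2W}$ is increasing in $W$, the comparison principle applies; solving the Dirichlet problem on each $D_n$ with boundary data forced to $+\infty$ yields a monotone family whose limit is the maximal solution, and the blow-up of boundary data is exactly what forces completeness of $e^{2W}|dz|^2$. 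Uniqueness within the class of complete solutions with $\mathcal{J}\geq 0$ then follows from a maximum principle of Omori--Yau/Cheng type on the complete surface $(\C,e^{2W}|dz|^2)$ applied to the difference of two solutions, as in \cite{wan92}, \cite{WA94}.

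The reverse direction, reconstructing $w$ from $W$, is the fundamental-theorem-of-surfaces step. Given holomorphic $\Phi$ and a solution $W$, the data $\mathcal{H}=e^{2W}$, $\mathcal{L}=|\Phi|^2 e^{-2W}$ assemble into the structure equations of a map into $\HH=\mathrm{PSL}(2,\R)/\mathrm{SO}(2)$, and the holomorphicity of $\Phi$ together with the Bochner equation for $W$ are exactly the integrability (flatness) conditions for the associated $\mathfrak{sl}(2,\R)$-valued connection. Because $\C$ is simply connected, this flat connection integrates to a developing map, well defined up to post-composition by an isometry of $\HH$, producing a harmonic $w$ with Hopf differential $\Phi$ and the prescribed energies; the sign $\mathcal{J}\geq 0$ makes $w$ orientation preserving. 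This is inverse to the first construction, which establishes the bijection.

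It remains to identify $\overline{w(\C)}$ as an ideal $(k+2)$-gon, and this asymptotic geometric analysis is the part I expect to be the main obstacle. The natural coordinate $\zeta=\int\sqrt{\Phi}$ behaves like $\tfrac{2}{k+2}z^{(k+2)/2}$ near infinity, so the horizontal and vertical foliations of $\Phi$ organize a neighborhood of $\infty$ into $k+2$ asymptotic half-plane sectors. Completeness together with $\mathcal{J}\to 0$ at infinity forces the pulled-back metric to be asymptotically the $\Phi$-flat metric, and a Gauss--Bonnet estimate on the geodesic curvature of the images of horizontal leaves (as in \cite{HTTW95}, \cite{ST02}) shows these images converge to complete geodesics. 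Each of the $k+2$ sectors then limits onto a single ideal vertex, with adjacent sectors sharing such a vertex, so the closure of the image is an ideal polygon with exactly $k+2$ geodesic sides. The delicate points here---uniform control of the leaf images all the way out to the ends, and ruling out any degeneration of the vertex and side count---are where the genuine work lies.
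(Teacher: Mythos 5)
Your proposal is correct in outline, but be aware that the paper does not actually prove Theorem \ref{thm:scherkmap}: it is quoted as a known result, stated explicitly to be ``an application of theorems from \cite{WA94}, \cite{HTTW95}, and \cite{ST02},'' with only a short sketch of how those pieces fit together. You agree with that sketch at the two ends: the displayed PDE is read as the Bochner equation for $\tfrac12\log\mathcal{H}$ with $K^M=0$, $K^N=-1$, and the identification of $\overline{w(\C)}$ as an ideal $(k+2)$-gon is delegated to the sector decomposition in the coordinate $\zeta=\int\sqrt{\Phi}$ and the geodesic-curvature estimates of \cite{HTTW95}, \cite{ST02}. Where you genuinely diverge is the middle step of recovering $w$ from a solution $W$: the paper, following \cite{WA94}, passes through Minkowski space --- a solution $W$ produces a constant mean curvature, totally spacelike graph in $\R^{2,1}$, and $w$ is recovered as its generalized Gauss map into the hyperboloid model of $\HH$, harmonicity of the Gauss map being equivalent to constancy of the mean curvature \cite{Milnor83} --- whereas you recover $w$ intrinsically, reading holomorphicity of $\Phi$ together with the Bochner equation as the flatness of an $\mathfrak{sl}(2,\R)$-valued connection and integrating it over the simply connected domain. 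Both routes are standard and correct; the Gauss-map route buys the geometric picture and is what the cited papers actually do, while yours is more self-contained and does not need the Lorentzian detour. Your existence scheme (weak subsolution $\tfrac12\log|\Phi|$, large solutions on disks, monotone limit to the maximal solution) and the Omori--Yau/Cheng-type uniqueness argument are likewise the arguments of \cite{wan92}, \cite{WA94}, so no gap there.

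One imprecision is worth correcting: completeness of $e^{2W}|dz|^2$ is not ``forced by the blow-up of boundary data'' --- that blow-up lives on $\partial D_n$ and disappears in the limit $n\to\infty$. What completeness actually follows from is the comparison bound you already have, $W\geq\tfrac12\log|\Phi|$, which gives $e^{2W}|dz|^2\geq|\Phi|\,|dz|^2$; the flat singular metric $|\Phi|\,|dz|^2$ is complete on $\C$ whenever $\Phi$ is a nonzero polynomial (its length element grows like $|z|^{k/2}\,|dz|$ near infinity), and completeness is inherited by any larger metric. The role of the infinite boundary data is only to make the limit the \emph{maximal} solution, which is what the uniqueness step then identifies with any other complete solution satisfying the sign condition.
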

This statement is an application of theorems from \cite{WA94}, \cite{HTTW95}, and \cite{ST02}. In \cite{WA94}, the Bochner equation is solved to produce constant mean curvature cuts in Minkowski space. These are graphical, totally space-like surfaces of constant meant curvature in $\R^{2,1}$ parameterized by $W:\C \rightarrow \R$, where we identify $\C \cong \{(x,y,0)|x,y\in \R\} \subset \R^{2,1}$. Their generalized Gauss maps $w:\C\rightarrow \HH$ have image in the space-like sphere $\HH = \{ (x,y,z)|x^2 + y^2 - z^2 = -1\}$. Furthermore, $w$ is harmonic precisely when the graph of $W$ has constant mean curvature \cite{Milnor83}.

The proof of Theorem \ref{thm:scherkmap} exploits the fact that the partial differential equation solved by $W$ coincides with the Bochner equation for the holomorphic energy of a harmonic map $w$. It is remarkable that a solution to the Bochner equation is sufficient to produce a harmonic map $w:\C \rightarrow \HH$, since this $\mathcal{H}^w =\frac{1}{2}log ||\partial w||$ necessarily solves the Bochner equation when $w$ is harmonic.

In \cite{HTTW95}, the closure of the image of a Scherk map is characterized to be an ideal polygon. In fact, they also prove that any harmonic map $\C \rightarrow \HH \times \R$ with image lying in the convex hull of finitely many ideal points (i.e., an ideal polygon) must have a polynomial Hopf differential.

\subsubsection{Courant-Lebesgue Lemma} Classically, harmonic maps from a fixed surface domain into a compact target space have been shown to exist by the direct method: consider a sequence of maps with decreasing energy in a fixed homotopy class of continuous maps, and show that the limit exists. The uniform bound on the energy of the maps leads to a uniform bound on the modulus of continuity by the Courant-Lebesgue Lemma \cite{Courant37}:

\begin{theorem}\label{thm:courantlebesgue} \emph{ (Courant-Lebesgue Lemma)}
Consider a $\mathcal{C}^2$ map $u :\Sigma ^2\rightarrow N$ with energy $E(\Sigma ^2, u) \leq K$. Then, for all $p\in \Sigma ^2$ and $\delta \in (0,1)$ there exists $\rho \in (\delta, \sqrt{\delta})$ so that, for all $x, y \in \{ q\in \Sigma ^2 | d_{\Sigma ^2}(q,p)=\rho \}$, we have $$d_{N}(u(x),u(y)) \leq \sqrt{\frac{8 \pi K}{\log \left(\delta^{-1}\right)}},$$ where $d_N$ denotes the intrinsic distance on $N$.
\end{theorem}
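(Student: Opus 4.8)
The plan is to run the classical pigeonhole argument over concentric circles centred at $p$, exploiting the conformal invariance of the two-dimensional Dirichlet energy so that the domain geometry never enters the constant. First I would fix a conformal coordinate $z=\rho e^{i\theta}$ centred at $p$ with $z(p)=0$, and use that in two real dimensions the energy integrates to the same value whether computed with the surface metric $\sigma$ or with the flat metric $|dz|^2$. In these polar coordinates the energy takes the form
$$ E(\Sigma^2,u)=\frac12\int\!\!\int\left(|u_\rho|^2+\frac{1}{\rho^2}|u_\theta|^2\right)\rho\,d\rho\,d\theta, $$
where $|\cdot|$ denotes the target norm of the indicated partial derivative. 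Discarding the non-negative radial term and restricting to the annulus $\{\delta<\rho<\sqrt{\delta}\}$ isolates the one inequality the whole proof rests on:
$$ 2K\ \ge\ \int_\delta^{\sqrt{\delta}}\frac{I(\rho)}{\rho}\,d\rho,\qquad I(\rho):=\int_0^{2\pi}|u_\theta|^2\,d\theta. $$

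The next step is the mean-value selection of the radius. Substituting $t=\log\rho$ turns the left side into an integral of $I(e^t)$ over the interval $(\log\delta,\tfrac12\log\delta)$, whose length is exactly $\tfrac12\log(\delta^{-1})$, and over which $\rho=e^t$ sweeps out $(\delta,\sqrt{\delta})$. Since the integral of $I$ over this interval is bounded by $2K$, the infimum of $I(\rho)$ over the annulus cannot exceed its average, so there is a radius $\rho\in(\delta,\sqrt{\delta})$ with
$$ I(\rho)\ \le\ \frac{2K}{\tfrac12\log(\delta^{-1})}\ =\ \frac{4K}{\log(\delta^{-1})}. $$
Continuity of $I$ for a $\mathcal{C}^2$ map makes this an honest minimum rather than merely an essential infimum.

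Finally I would convert this angular-energy bound into a diameter bound on the image circle. On the coordinate circle $C_\rho=\{|z|=\rho\}$ the length of the image curve is $\int_0^{2\pi}|u_\theta|\,d\theta$, and Cauchy--Schwarz gives
$$ \mathrm{length}\big(u(C_\rho)\big)\ \le\ \sqrt{2\pi\,I(\rho)}\ \le\ \sqrt{\frac{8\pi K}{\log(\delta^{-1})}}. $$
Since any two image points $u(x),u(y)$ with $x,y\in C_\rho$ are joined by an arc of this closed curve, the intrinsic distance $d_N(u(x),u(y))$ is at most its length, which is precisely the claimed bound.

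The main obstacle is not the estimate itself, which is robust, but reconciling the statement's geodesic distance circles $\{d_{\Sigma^2}(q,p)=\rho\}$ with the conformal coordinate circles used above while keeping the constant exactly $8\pi$. Conformal invariance of the two-dimensional Dirichlet integral is what does the work here: the metric area factor that would otherwise appear in the annulus estimate drops out entirely, which is what pins the constant down to $8\pi$ independently of the curvature of $\Sigma^2$. One then only needs that, for small radii, geodesic circles and coordinate circles can be matched after a harmless reparametrisation of $\delta$, using that the conformal factor near $p$ is bounded above and below. A secondary point to verify is the Fubini/regularity step guaranteeing that the energy genuinely splits as the radial integral of $I(\rho)$ and that $I$ is finite on the chosen circle; the hypothesis $u\in\mathcal{C}^2$ renders all of this elementary.
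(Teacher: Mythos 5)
Your argument is correct, and it is the standard classical proof: conformal invariance of the two-dimensional energy, the pigeonhole/mean-value selection of a radius against the logarithmic measure $d\rho/\rho$ on the annulus $\{\delta<\rho<\sqrt{\delta}\}$ (whose total mass $\tfrac12\log(\delta^{-1})$ is exactly what produces the constant), and Cauchy--Schwarz to convert the angular energy bound into a length bound on the image circle. The constant works out precisely to $\sqrt{8\pi K/\log(\delta^{-1})}$ as claimed. Note, however, that the paper itself does not prove this statement: it is recalled as a classical result attributed to Courant, so there is no internal proof to compare against. The one caveat you flag yourself --- reconciling the geodesic circles $\{q : d_{\Sigma^2}(q,p)=\rho\}$ of the statement with the conformal coordinate circles of the proof --- is the genuine point of friction; for a fixed $\delta$ this matching costs a bounded distortion of the radius interval and hence of the constant, which is harmless for every application in the paper (the lemma is invoked only for equicontinuity of the restrictions $h_s|_{\Sigma_r}$, with constants allowed to depend on $r$), but it does mean the statement with geodesic circles and the exact constant $8\pi$ should really be read as holding after the reparametrisation of $\delta$ you describe, or else stated for coordinate circles as in the classical sources.
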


For our non-compact setting, we will look at a sequence of harmonic maps on a compact exhaustion. In order to get them to uniformly sub-converge, it will be required to restrict them onto a common compact domain. We will apply the Courant-Lebesgue Lemma to obtain equicontinuity of these restrictions.

\subsubsection{Wood's analysis of images of harmonic maps}
The following two results are contained in \cite{Wood74thesis}, but we recall their cleaner statements from different sources. The following was adapted from the proof of Theorem 5.1 (and its corollary) of \cite{Wood77paper}, and places restrictions on the Euler characteristic of the image of a harmonic map:

\begin{theorem}\label{thm:wood_gauss-bonnet}\emph{(Gauss-Bonnet formula for images of harmonic maps)}
Let $u:(M,\sigma)\rightarrow (N,\rho)$ be a harmonic map between real analytic surfaces, where $M$ may have boundary $\partial M$. In the case $\partial M\neq \emptyset$, we only assume $\left.u\right|_{\partial M}$ is smooth on the boundary and that $\left.u\right|_{int(M)}$ is harmonic in the interior of $M$. The total curvature $\mathcal{K}(u(M))$, the geodesic curvature of the boundary image, and the Euler characteristic $\chi (u(M))$ of the image of $u$ are related by the inequality
\begin{align*}
\mathcal{K}(u(M)) + \sum_{\gamma \in \partial u(M)} \kappa_{\gamma} \geq 2\pi \chi (u(M)).
\end{align*}
In particular, if $\sum_{\gamma \in \partial M} \kappa_{\gamma} = 0$ and if the metric $\rho$ on $N$ has non-positive curvature, then $u(M)$ cannot be contractible.
\end{theorem}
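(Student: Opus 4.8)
The plan is to apply the Gauss--Bonnet theorem directly to the image $\Omega := u(M)$, regarded as a compact region in $(N,\rho)$ with piecewise-smooth boundary, and then to argue that the only error terms one incurs—the exterior (turning) angles at the corners of $\partial\Omega$—carry a favorable sign. The inequality will then be exactly the Gauss--Bonnet identity with these nonpositive corner contributions discarded.

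First I would invoke Wood's local structure theory for harmonic maps between real-analytic surfaces (the content of Theorem 5.1 of \cite{Wood77paper}). Because $u$ is harmonic its Hopf differential $\Phi^u$ is holomorphic, so the zeros of $\Phi^u$—and hence the interior branch points, where $du$ drops rank—are isolated, while the fold set $\{\mathcal{J}^u = 0\}$ is a locally finite union of real-analytic arcs meeting and terminating in finitely many cusp points. Away from this critical set $u$ is a local diffeomorphism onto an open subset of $N$, so the pullback metric $u^*\rho$ is there a genuine Riemannian metric and $u$ is a local isometry of $(M,u^*\rho)$ onto its image; in particular an interior branch point (local model $z\mapsto z^{m}$) has a full neighborhood as image and so maps to an ordinary interior point of $\Omega$. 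Consequently $\Omega$ is a compact surface carrying the ambient metric $\rho$, whose frontier $\partial\Omega$ is a finite union of smooth arcs—images of $\partial M$ and of fold curves—joined at finitely many corners located at images of cusps.

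Next I would write the Gauss--Bonnet formula for $\Omega$ as a surface with corners,
\begin{align*}
\int_{\Omega} K^N \, dA + \int_{\partial\Omega} \kappa \, ds + \sum_i \varepsilon_i = 2\pi \chi(\Omega),
\end{align*}
where $\varepsilon_i$ is the exterior angle at the $i$-th corner. Identifying $\int_{\Omega} K^N\, dA = \mathcal{K}(u(M))$ and $\int_{\partial\Omega}\kappa\, ds = \sum_{\gamma\in\partial u(M)}\kappa_\gamma$, the asserted inequality $\mathcal{K}(u(M)) + \sum_\gamma \kappa_\gamma \geq 2\pi\chi(u(M))$ is equivalent to the single claim that $\sum_i \varepsilon_i \leq 0$, i.e.\ that every corner of the image is reflex (interior angle at least $\pi$).

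This sign statement for the corner angles is the main obstacle and the crux of the argument, and it is where Wood's classification of admissible harmonic germs does the essential work: the cusps of a harmonic map between surfaces are \emph{folding} cusps, at which the two bounding arcs of $\partial\Omega$ meet tangentially with the image occupying the reflex side, so the turning angle there is $-\pi \leq 0$, while ordinary fold points contribute smooth boundary arcs with no angle defect. Granting that every corner contributes $\varepsilon_i \leq 0$, we obtain $\mathcal{K}(u(M)) + \sum_\gamma\kappa_\gamma = 2\pi\chi(u(M)) - \sum_i\varepsilon_i \geq 2\pi\chi(u(M))$, as desired. Finally, for the last assertion, if the boundary image is geodesic then $\sum_\gamma\kappa_\gamma = 0$, and if $K^N \leq 0$ then $\mathcal{K}(u(M)) = \int_{\Omega} K^N\, dA \leq 0$; the inequality then forces $2\pi\chi(u(M)) \leq 0$, so $\chi(u(M)) \leq 0$ and $\Omega$ cannot be contractible, since a contractible surface has Euler characteristic $1$.
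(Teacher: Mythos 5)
The paper does not actually supply a proof of this theorem: it states the result as an adaptation of Theorem 5.1 of \cite{Wood77paper} and remarks only that the adaptation ``is proven exactly as the original theorem, without omitting the prescribed boundary value terms.'' So your proposal must be judged as a reconstruction of Wood's argument, and its skeleton is indeed Wood's: apply Gauss--Bonnet to the image $\Omega=u(M)$ as a region with piecewise-analytic boundary, observe that interior branch points map to interior points, and control the sign of the corner contributions. Your treatment of branch points and your final deduction of non-contractibility are fine. The gap is exactly at the step you identify as the crux: the claim that every corner of $\partial\Omega$ has non-positive exterior angle because harmonic cusps are ``folding cusps \ldots with the image occupying the reflex side, so the turning angle is $-\pi$.'' That claim is false, and it also misidentifies where the corners of $\Omega$ come from.

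Harmonic maps do carry genuine Whitney cusps: for flat metrics, $u(z)=z+\overline{z}^{2}$ is harmonic, its fold circle is $\{|z|=\tfrac12\}$, and three points of it are cusps. At a Whitney cusp the image of a full neighborhood of the cusp point is a \emph{full} neighborhood of its image (one side of the fold maps onto the thin cuspidal wedge, but the other side already covers a disk), so cusps interior to $M$ create no frontier corners at all; the corners of $\partial\Omega$ arise instead at crossings of frontier arcs and along $u(\partial M)$. Moreover, when a cuspidal corner \emph{does} lie on the frontier --- which your hypotheses allow, namely when singular points sit on $\partial M$ --- its turning is $+\pi$, not $-\pi$: take $M=\{|z|\le\tfrac12\}$ and $u(z)=z+\overline{z}^{2}$; then $u$ is harmonic inside, smooth up to the boundary, and $u(M)$ is the three-cusped deltoid region, each corner having exterior angle $+\pi$. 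In that example $\sum_i\varepsilon_i=3\pi>0$, the smooth arcs contribute $\int_{\partial\Omega}\kappa=-\pi$, and your inequality with the corners discarded would read $0-\pi\ge 2\pi$, which is false; the statement only holds because the corner turning is retained inside the boundary term $\sum_{\gamma}\kappa_\gamma$ (this is precisely how Wood's ``prescribed boundary value terms'' survive). The terms one is allowed to discard are the fold-related ones, and their non-positivity --- both the curvature sign of smooth fold-image arcs and the reflexivity of crossings of such arcs --- is not a consequence of the classification of germs at all: it rests on the separate maximum-principle fact that near a fold the image lies on the \emph{concave} side of the fold image, i.e.\ the geodesic curvature vector of each fold-type frontier arc points out of the image (the fact this paper invokes from \cite{JK79} in the proof of Proposition \ref{prop:parachute-maps_non-negative-J}); the complement of $\Omega$ near a crossing is then an intersection of half-plane-like sectors, hence of angle at most $\pi$. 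Your proposal never states or proves this concavity, yet it is the actual engine of the sign argument. Finally, the degenerate case in which $u$ has rank $\le 1$ everywhere (image a geodesic arc) must be excluded or treated separately, since there Gauss--Bonnet on the image does not apply.
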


This adaptation is proven exactly as the original theorem, without omitting the prescribed boundary value terms. 

\subsubsection{Dumas-Wolf's uniqueness of orientation-preserving harmonic maps through Hopf differentials}

We will consider parachute maps to study the energies of the harmonic maps from compacta $\Sigma _s$ to $\HH$. In studying the behavior of the quadratic differentials of these parachute maps, we will use a blow-up analysis of the parachute maps, leading to harmonic maps from $\C\rightarrow \HH$. We will characterize the possibilities for those blow-ups using the following result which holds in the simply connected domain setting (Theorem 5.3 of \cite{DW15}):
\begin{theorem}\label{thm:dumas-wolf_uniqueness}
For any polynomial holomorphic differential $\Phi$ of degree $k$, there is a unique complete and non-positively curved metric $\sigma (z ) |dz|^2$ on $\C$ for which the curvature satisfies $$K_{\sigma} = \left( -1 + |\Phi| _{\sigma} ^2\right)$$
\end{theorem}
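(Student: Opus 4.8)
The plan is to recognize the prescribed-curvature equation as the Bochner equation already solved in Theorem \ref{thm:scherkmap}, and to read off existence and uniqueness from there. I would write the unknown metric in conformal form $\sigma(z)|dz|^2 = e^{2w(z)}|dz|^2$ and the quadratic differential as $\Phi = \phi(z)\,dz^2$, where $\phi$ is a polynomial of degree $k$ (hence not identically zero). Everything then reduces to translating the two defining constraints, namely the curvature identity and the sign of the curvature, into the language of the conformal factor $w$.

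First I would record the two standard conformal formulas. The Gaussian curvature of $e^{2w}|dz|^2$ is $K_\sigma = -e^{-2w}\Delta w$, where $\Delta = 4\,\partial_z\partial_{\zbar}$ is the flat Laplacian (this is consistent with the paper's $K^M = -\tfrac12 \Delta_M \log \sigma$), and the pointwise norm of the quadratic differential is $|\Phi|_\sigma^2 = |\phi|^2 e^{-4w}$. Substituting these into $K_\sigma = -1 + |\Phi|_\sigma^2$ and multiplying through by $-e^{2w}$ gives
$$\Delta w = e^{2w} - |\phi|^2 e^{-2w},$$
which is exactly the equation of Theorem \ref{thm:scherkmap} with $W = w$ and with $|\phi|^2$ playing the role of the coefficient norm $|\Phi|^2$ there. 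Next I would observe that the geometric hypothesis $K_\sigma \leq 0$ translates, under the same substitution, into $|\phi|^2 e^{-4w} \leq 1$, i.e. $e^{2w} - |\phi|^2 e^{-2w} \geq 0$; and completeness of $e^{2w}|dz|^2$ is verbatim the completeness hypothesis of Theorem \ref{thm:scherkmap}. Thus the class of complete, non-positively curved metrics solving the curvature equation corresponds bijectively to the class of complete solutions of the Bochner equation satisfying the sign condition. Applying Theorem \ref{thm:scherkmap} to the nonzero polynomial $\phi$ then yields a unique such $w$, hence a unique metric $\sigma$, proving the theorem.

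The point that needs the most care is the exact matching of the two constraint classes: the non-strict inequality $K_\sigma \leq 0$ must correspond to the non-strict sign condition $e^{2W} - |\phi|^2 e^{-2W} \geq 0$ (including the locus where $|\Phi|_\sigma = 1$ and the curvature vanishes), and I would confirm that the pointwise identification $\sigma = e^{2w}$ introduces no completeness subtlety. If instead one wanted a uniqueness argument not routed through Theorem \ref{thm:scherkmap}, the main obstacle would be the non-compactness of $\C$: given two solutions $w_1, w_2$, their difference $h = w_1 - w_2$ satisfies $\Delta h = (e^{2w_1} - e^{2w_2}) - |\phi|^2(e^{-2w_1} - e^{-2w_2})$, whose right-hand side is strictly monotone in $h$ with the favorable sign, but the comparison can no longer be closed at an interior maximum and would instead require a maximum principle at infinity of Omori--Yau type, which is precisely where completeness and the curvature bound would enter. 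Reducing to Theorem \ref{thm:scherkmap} sidesteps this analytic difficulty entirely.
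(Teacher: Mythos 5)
You should know at the outset that the paper contains no proof of this statement to compare against: Theorem \ref{thm:dumas-wolf_uniqueness} is imported verbatim as Theorem 5.3 of \cite{DW15}, one of the background results collected in \S\ref{section:preliminaries}. So the only question is whether your argument stands on its own, and it does. The dictionary you set up is exact: with $\sigma = e^{2w}$ one has $K_\sigma = -e^{-2w}\Delta w$ and, for a \emph{quadratic} differential, $|\Phi|_\sigma^2 = |\phi|^2 e^{-4w}$, so the curvature identity is precisely the Bochner equation of Theorem \ref{thm:scherkmap}, the condition $K_\sigma \le 0$ is precisely the sign condition $e^{2w} - |\phi|^2 e^{-2w} \ge 0$, and completeness is literally the same hypothesis on both sides; since a polynomial of degree $k$ is not identically zero, Theorem \ref{thm:scherkmap} applies and transfers existence and uniqueness through the bijection $\sigma \leftrightarrow w = \tfrac12 \log \sigma$. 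The one point worth stating explicitly is regularity: a Riemannian metric is smooth and positive, so $w$ lands in the class $\mathcal{C}^\infty$ in which Theorem \ref{thm:scherkmap} asserts uniqueness.

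Two caveats on what this buys. First, your argument shows the theorem \emph{as stated in this paper} is equivalent to Theorem \ref{thm:scherkmap}, which is itself quoted from \cite{WA94}, \cite{HTTW95}, \cite{ST02}; that is a legitimate and tight reduction within the paper's logical framework, but it is not an independent proof. Dumas--Wolf prove their result directly, and in greater generality: their theorem covers polynomial differentials of arbitrary order $n$, for which the prescribed-curvature equation becomes $\Delta w = e^{2w} - |\phi|^2 e^{-2(n-1)w}$ and no longer coincides with the harmonic-map Bochner equation, so your route is confined to the quadratic case --- which is, however, the only case this paper states and uses (it is applied to Hopf differentials of blow-up limits in the proof of Proposition \ref{prop:bounded-qd-subseqn}). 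Second, your closing remark correctly identifies where the real analytic content lives: a direct uniqueness argument on $\C$ cannot be closed at an interior maximum and requires a maximum principle at infinity of Omori--Yau/Cheng--Yau type, which is exactly the role completeness and the curvature bound play in \cite{DW15}; citing Theorem \ref{thm:scherkmap} outsources that difficulty rather than resolving it.
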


This theorem leads to a characterization of harmonic maps because the pull-back metric of a harmonic map from $\C$ to $\HH$ has a metric with Gaussian curvature expressed by this formula.

\subsubsection{Cheng's Lemma}
The classical Liouville Theorem states that a bounded entire function on $\C$ must be constant. Yau extended the Liouville theorem by generalizing the domain of the harmonic function: any bounded harmonic function on a manifold of non-negative Ricci curvature must be constant \cite{Yau75}.
Cheng takes this a step further by generalizing the target manifold: if $M$ is a Riemannian manifold on non-negative Ricci curvature, and $N$ is a simply connected Riemannian manifold of non-positive sectional curvature, then any harmonic map $u:M \rightarrow N$ whose image $u(M)$ lies in a compact subset of $N$ must be constant \cite{Cheng80}.

This final generalization was obtained in \cite{Cheng80} as a consequence of a gradient estimate for harmonic maps:

\begin{theorem}\label{thm:chenglemma} \emph{(An interior gradient estimate)}
Let $M$ be an $m$-dimensional Riemannian manifold with Ricci curvature bounded below by $-K \leq 0$. Let $B_a (x_0)$ be the closed geodesic ball of radius $a$ and center $x_0$ in $M$, and let $r$ be the distance function from $x_0$. Let $f: M \rightarrow N$ be a harmonic map into $N$ which is simply connected and having non-positive sectional curvature. Let $y_0$ lie outside of $f(B_a(x_0))$ and let $\rho$ denote the distance from $y_0$. Let $b > sup \{\rho(f(x)) | x\in B_a (x_0)\}$. Then we have on $B_a(x_0)$: $$\frac{(a^2-r^2)^2|\nabla f| ^2}{(b^2 - \rho ^2 \circ f) ^2} \leq c_m max\left\{\frac{K a^4}{\beta}, \frac{a^2(1+Ka)}{\beta}, \frac{a^2b^2}{\beta} \right\} $$ where $c_m$ is a constant depending only on the dimension of $M$ and $\beta = inf \{b^2 - \rho ^2 \circ f (x)|  x\in B_a (x_0)\}$.
\end{theorem}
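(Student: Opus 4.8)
The plan is to run the classical Bochner/maximum-principle machinery of Yau and Cheng on a suitably weighted energy density. Write $\phi = a^2 - r^2$ and $\psi = b^2 - \rho^2\circ f$, and consider the test function
\[ F = \frac{\phi^2\,|\nabla f|^2}{\psi^2} \]
on $B_a(x_0)$. By hypothesis $\psi > 0$ throughout $B_a(x_0)$, while $\phi > 0$ in the interior and $\phi = 0$ on $\partial B_a(x_0)$; hence $F \geq 0$ and $F$ vanishes on the boundary sphere, so $F$ attains its maximum at some interior point $x^\ast$. If $|\nabla f|(x^\ast)=0$ the estimate is trivial, so I assume $|\nabla f|(x^\ast) > 0$. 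The whole estimate then follows by analyzing the first- and second-order conditions $\nabla\log F = 0$ and $\Delta\log F\leq 0$ at $x^\ast$ and reading off a bound for $F(x^\ast)$, since $F\leq F(x^\ast)$ everywhere on $B_a(x_0)$.

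First I would assemble the three geometric inputs that feed the maximum principle. (i) The Bochner formula for the harmonic map $f$, $\tfrac12\Delta|\nabla f|^2 = |\nabla df|^2 + \langle \mathrm{Ric}^M\nabla f,\nabla f\rangle - \sum_{i,j} R^N\bigl(df(e_i),df(e_j),df(e_i),df(e_j)\bigr)$, combined with $K^N\leq 0$ (which removes the target-curvature term with a favorable sign), the lower bound $\mathrm{Ric}^M\geq -K$, and the Kato inequality $|\nabla df|^2\geq \bigl|\nabla|\nabla f|\bigr|^2$, yields
\[ \Delta|\nabla f|^2 \geq \frac{\bigl|\nabla|\nabla f|^2\bigr|^2}{2\,|\nabla f|^2} - 2K|\nabla f|^2 . \]
(ii) Because $N$ is simply connected with non-positive sectional curvature, it is Cartan--Hadamard and $\rho^2$ is smooth and uniformly convex everywhere (no cut locus): the Hessian comparison theorem gives $\mathrm{Hess}_N(\rho^2)\geq 2\,g_N$, so composing with the tension-free $f$ gives $\Delta(\rho^2\circ f)\geq 2|\nabla f|^2$. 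This is exactly the term producing the essential good sign $+4|\nabla f|^2/\psi$ inside $-2\Delta\log\psi$. (iii) The Laplacian comparison theorem under $\mathrm{Ric}^M\geq -K$ bounds $\Delta r^2$ from above by a quantity of the form $c_m(1+\sqrt K\,r)$.

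With these in hand, I would expand $\log F = 2\log\phi + \log|\nabla f|^2 - 2\log\psi$ and compute $\Delta\log F$ via $\Delta\log g = \Delta g/g - |\nabla g|^2/g^2$ for each factor. Substituting (i)--(iii) produces at $x^\ast$ a single scalar inequality whose leading good term is $4|\nabla f|^2/\psi$ and whose error terms are $c_m(1+\sqrt K\,r)/\phi$, $K$, $|\nabla\phi|^2/\phi^2 = 4r^2/\phi^2$, $|\nabla\psi|^2/\psi^2$, and $\bigl|\nabla|\nabla f|^2\bigr|^2/|\nabla f|^4$. The first-order condition $\nabla\log F = 0$ lets me eliminate the last term by writing $\nabla|\nabla f|^2/|\nabla f|^2 = -2\nabla\phi/\phi + 2\nabla\psi/\psi$ and applying Young's inequality. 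After multiplying through by $\phi^2/\psi$ and using $r\leq a$, $\rho\circ f < b$, $\phi\leq a^2$, and $\psi\geq\beta$, each error term is dominated by one of $Ka^4/\beta$, $a^2(1+Ka)/\beta$, $a^2b^2/\beta$, which is precisely the claimed right-hand side.

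The main obstacle is the term-absorption bookkeeping in that last step: the cross terms $|\nabla\psi|^2/\psi^2$ that appear both directly and through the first-order substitution must be reabsorbed into the good term $|\nabla f|^2/\psi$ without overwhelming it, which forces a careful choice of the Young weights together with genuine use of $b > \sup(\rho\circ f)$ (so that $\psi$ is bounded below by $\beta > 0$). A secondary technical point is that $r$, and hence $\phi$, is not smooth along the cut locus of $x_0$; I would handle this in the standard way by Calabi's trick, replacing $r$ near $x^\ast$ with a smooth upper barrier for which the comparison inequality (iii) persists, so that the maximum-principle computation is unaffected.
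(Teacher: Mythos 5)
You should first be aware that the paper does not prove this statement at all: Theorem \ref{thm:chenglemma} is quoted as background from \cite{Cheng80} and is invoked only as a black box in \S\ref{section:toyproblem-the_energy_estimate} (to bound $|\nabla u_s|$ along $\gamma_r$ in the proof of the Energy Estimate, Lemma \ref{lem:energyestimate}). So the only meaningful benchmark is Cheng's original argument, and your proposal is exactly that argument: the Yau-style maximum principle applied to $F=\phi^2|\nabla f|^2/\psi^2$, fed by (i) the harmonic-map Bochner formula with $K^N\leq 0$, the Ricci lower bound, and Kato's inequality, (ii) the Cartan--Hadamard Hessian comparison $\mathrm{Hess}_N(\rho^2)\geq 2g_N$ composed with the vanishing tension field to produce the good term $4|\nabla f|^2/\psi$, (iii) the Laplacian comparison for $\Delta r^2$, together with the first-order condition at the interior maximum, Young's inequality, and Calabi's trick at the cut locus. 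All of these ingredients are correctly identified and correctly assembled; as a reconstruction of the cited proof, the architecture is sound.

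There is, however, one concrete inaccuracy in your final claim that the absorption lands ``precisely'' on the stated right-hand side. Carrying out your own scheme honestly: at the maximum the substitution of the first-order condition makes the two $|\nabla\psi|^2/\psi^2$ terms cancel, leaving the cross term $4\langle\nabla\psi,\nabla\phi\rangle/(\psi\phi)$, which is bounded by $16br|\nabla f|/(\psi\phi)=16br\sqrt{F}/\phi^2$. Multiplying through by $\phi^2/4$ and using $\psi\geq\beta$ then gives an inequality of the shape $\beta F \leq A + 4br\sqrt{F}$ with $A\leq c_m\left(a^2(1+\sqrt{K}a)+Ka^4\right)$, and solving the quadratic yields
$$F \;\leq\; c_m\max\left\{\frac{Ka^4}{\beta},\; \frac{a^2}{\beta},\; \frac{a^2b^2}{\beta^2}\right\}.$$
Note the power $\beta^{2}$ in the last slot: because the cross term scales like $\sqrt{F}$ rather than $F$, no choice of Young weights can produce $a^2b^2/\beta$ there. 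Dimensional analysis confirms this is not a defect of your argument but of the statement as transcribed: $\beta$ carries the dimensions of (target length)$^2$, so $a^2b^2/\beta$ is not commensurable with the left-hand side, whereas $a^2b^2/\beta^2$ is (the same remark applies to ``$1+Ka$,'' which should be $1+\sqrt{K}a$ or absorbed into $\max\{a^2,Ka^4\}$). Since the paper uses the lemma purely qualitatively, with $a$, $b$, $\beta$ fixed, the discrepancy is harmless downstream; but your write-up should prove the dimensionally correct estimate and flag the transcription slip, rather than assert exact agreement with the quoted form.
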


\subsubsection{Choe's iso-energy inequality}
The parachute maps we will introduce solve a partially-free boundary value harmonic mapping problem. In order to estimate its 1-dimensional energy on the free boundary, we can relate this energy to the energy of a harmonic map from a disk ``filling in" the free boundary image. We will be able to do this via (Theorem 2.4 of \cite{Choe98}):
\begin{theorem}\label{thm:isoenergy}\emph{(Iso-energy inequality)}
Let $u$ be harmonic map from a ball $B$ in $\R ^n$ into a non-positively curved manifold. Let $E(u, B)$ denote the energy of $u$ and $E(u|_{\partial B}, \partial B)$ denote the energy of $u|_{\partial B}$. Then $$(n-1) E(u,B) \leq E(u|_{\partial B}, \partial B) .$$
\end{theorem}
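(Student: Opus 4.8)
The plan is to combine two ingredients. The first is a conservation law (a Pohozaev--Rellich identity) arising from the stress--energy tensor of $u$, which uses only harmonicity. The second is the subharmonicity of the energy density, which is exactly the place where the non-positive curvature of the target enters. It is enough to treat the unit ball $B$; I write $e(u)=\tfrac12|\nabla u|^2$ for the energy density (the common scaling is irrelevant to the final inequality), $\nu$ for the outward normal on $\partial B$, and split $|\nabla u|^2 = |\partial_\nu u|^2 + |\nabla_T u|^2$ into normal and tangential parts, so that $E(u,B)=\int_B e(u)$ and $E(u|_{\partial B},\partial B)=\tfrac12\int_{\partial B}|\nabla_T u|^2=:P$. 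I also abbreviate $Q:=\tfrac12\int_{\partial B}|\partial_\nu u|^2$, so that $\int_{\partial B}e(u)=P+Q$.

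For the first ingredient, recall that the stress--energy tensor $S_{ij}=e(u)\,\delta_{ij}-\langle\partial_i u,\partial_j u\rangle$ is divergence free whenever $u$ is harmonic, since $\mathrm{div}\,S=-\langle\tau_u,du\rangle=0$; no curvature hypothesis is needed here. Testing the identity $\mathrm{div}(S\cdot X)=(\mathrm{div}\,S)\cdot X + S:\nabla X$ against the radial field $X=x$, for which $\nabla X=\mathrm{Id}$ and $X=\nu$ on $\partial B$, and using $\mathrm{tr}\,S=(n-2)e(u)$ together with $S(\nu,\nu)=e(u)-|\partial_\nu u|^2$, the divergence theorem gives
\[ (n-2)\,E(u,B)=\int_{\partial B}\big(e(u)-|\partial_\nu u|^2\big)=P-Q. \]

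For the second ingredient, the Bochner formula for harmonic maps, with the domain flat (so its Ricci curvature vanishes) and the target non-positively curved (so the target-curvature term has a favorable sign), yields $\tfrac12\Delta|\nabla u|^2\ge|\nabla du|^2\ge 0$; hence $e(u)$ is subharmonic on $B$. For a subharmonic function the solid mean is dominated by the boundary spherical mean, because the spherical average is monotone in the radius; on the unit ball this reads $n\,E(u,B)\le\int_{\partial B}e(u)=P+Q$. Eliminating $Q$ with the Pohozaev relation gives $n\,E(u,B)\le P+\big(P-(n-2)E(u,B)\big)$, that is $(n-1)\,E(u,B)\le P=E(u|_{\partial B},\partial B)$, which is the assertion.

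The only substantive input beyond bookkeeping is the subharmonicity of $e(u)$, and this is the step I expect to require the most care: it is precisely where non-positive curvature is used (through the sign of the target curvature term in Bochner), and it presupposes enough regularity of $u$. Both identities involve integration by parts up to $\partial B$; since harmonic maps are smooth in the interior, I would derive them first on concentric balls $B_r$ with $r<1$ and then let $r\to1$, using finiteness of the boundary energy. As a consistency check one notes that affine maps make $e(u)$ constant, hence harmonic, so that every inequality above becomes an equality and the constant $(n-1)$ is sharp; this is the sense in which the estimate is an \emph{iso-energy} inequality.
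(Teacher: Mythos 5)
Your argument is correct, but note first that there is no internal proof to compare it against: the paper imports Theorem \ref{thm:isoenergy} verbatim as Theorem 2.4 of \cite{Choe98} and uses it as a black box in the proof of the Energy Estimate (Lemma \ref{lem:energyestimate}). So what you have produced is a self-contained derivation of a quoted result. Both of your ingredients check out. Divergence-freeness of the stress--energy tensor uses only $\tau_u=0$, and pairing it with the position field $X=x$ gives $\mathrm{tr}\,S=(n-2)e(u)$ and hence $(n-2)E(u,B)=P-Q$ on the unit ball, exactly as you compute. The Eells--Sampson Bochner formula with flat domain and non-positively curved target gives $\tfrac12\Delta|\nabla u|^2\ge|\nabla du|^2\ge 0$, so $e(u)$ is subharmonic and monotonicity of spherical means yields $nE(u,B)\le P+Q$. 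Adding the two relations eliminates $Q$ with the correct sign, and your affine-map check correctly shows the constant $n-1$ is sharp.

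One caveat deserves to be explicit, because it affects how the theorem can be quoted: the inequality is \emph{not} scale-invariant, so ``it is enough to treat the unit ball'' is not a harmless normalization. Under $x\mapsto Rx$ the interior energy scales by $R^{n-2}$ while the boundary energy scales by $R^{n-3}$, so on a ball of radius $R$ your argument (correctly rerun with $X=x$, for which $X=R\nu$ on $\partial B_R$) yields $(n-1)E(u,B_R)\le R\,E(u|_{\partial B_R},\partial B_R)$, with a factor of the radius on the right. The statement as printed in the paper --- an arbitrary ball, no radius factor --- is literally false for $R>1$: the identity map of $\R^n$ restricted to $B_R$ is harmonic into a flat (hence non-positively curved) target, and there $(n-1)E(u,B_R)=\tfrac{n(n-1)}{2}\omega_nR^{n}$ while the boundary energy is $\tfrac{n(n-1)}{2}\omega_nR^{n-1}$. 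So the correct normalization is the unit ball (equivalently, the version with the radius factor), which is what you proved and what is surely intended; in the paper's own application the disk $\Omega_r$ has large radius $r$, so the inequality used there should carry the factor $r$, which is harmless since only an $r$-dependent bound uniform in $s$ is needed. Finally, your limiting argument $r\to 1$ for boundary regularity needs $u$ smooth up to $\partial B$ (tacit in the statement, and true in the paper's application where the boundary data is smooth); finiteness of the trace energy alone would not guarantee that the boundary integrals on $\partial B_r$ converge to those on $\partial B$.
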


\subsubsection{Glossary of notation}
Throughout the construction, we will compare so many maps to each other that it may be useful to have a dossier for the maps, their domains, and some associated quantities. We collect below the many symbols used throughout our discussion.

\begin{align*}
& \mbox{\bf{Domains}} \\
\Sigma & := \mbox{ the punctured Riemann surface}\\
\Sigma _s & : = \mbox{ a compactum of a compact exhaustion for the punctured Riemann surface}\\
\Omega _s & : = \mbox{ the disk of radius } s\\
\Omega _{r,s} & := \mbox{ the annulus } \Omega _s \setminus \Omega _r \mbox{ (identified with } \Sigma _{r,s} := \Sigma _s \setminus \Sigma _r ) \mbox{ for } r>>1\\
\gamma _s &: = \mbox{ the boundary } \partial \Omega _s\\
\Gamma _s &: = \mbox{ the image of } \gamma _s \mbox{ under } w\\
\mathcal{P}_k & : = \mbox{ the $k$-sided regular ideal polygon in $\HH$}\\
& \\
& \mbox{\bf{Functions and associated quantities}} \\
\tau_f &:= \mbox{ the tension field of a } \mathcal{C}^2 \mbox{ map } f\\
\Phi ^f (z) & := \phi _f (z) dz^2 = \mbox{ the Hopf differential of the map } f\\
\mathcal{H}^f &= \mbox{ the holomorphic energy of the map } f\\
\mathcal{L}^f &= \mbox{ the anti-holomorphic energy of the map } f\\
e^f(z) &= \mbox{ the (pointwise) energy density of the map } f\\
E(D, f) &= \mbox{ the energy of the map } f|_{D} \mbox{ on the domain }D\\
\\
& \mbox{\bf{Auxilliary harmonic maps (in order of appearance)}} \\
w & : = \mbox{ the Scherk map } \C \rightarrow \HH \mbox{ with an ideal square image}\\
h_s & := \mbox{ a harmonic map } \Sigma _s \rightarrow \HH \mbox{ with Dirichlet boundary conditions}\\
u_s & := \mbox{ the parachute map } \Omega _{r,s} \rightarrow \HH \mbox{ solving a partially free BVP}\\
\end{align*}

\section{Construction: Proof of Theorem \ref{thm:crusher_existence}}\label{section:construction}
In this section, we prove Theorem \ref{thm:crusher_existence}, assuming the The Energy Estimate (Lemma \ref{lem:energyestimate}), along with the previous results on harmonic maps from \S\ref{section:preliminaries}. The proof will proceed by producing a sub-converging sequence of harmonic maps from compact domains exhausting a genus $g$ punctured Riemann surface to an ideal $k$-sided polygon in $\HH$. To organize ideas, we structure the proof in stages. Each sub-section is devoted to a stage of the proof:

\begin{itemize}
\item[\S\ref{section:construction-set_up}:]We will define harmonic maps $h_s$ on compacta of a punctured Riemann surface $\Sigma$.
  \begin{enumerate}
    \item Define a genus $g$ Riemann surface domain $\Sigma$ and a compact exhaustion $\cup_{s} \Sigma _s$ of $\Sigma$.
    \item Define a $k$-sided ideal polygon $\mathcal{P}_k \subset \HH$.
    \item Define harmonic maps $h_s$ from these compacta $\Sigma _s$ to $\mathcal{P}_k \subset \HH$.
  \end{enumerate}
\item[\S\ref{section:construction-compacta_subconvergence_condition}:]We will determine conditions for sub-convergence for these maps $h_s$.
  \begin{enumerate}
    \item Aiming to diagonalize, fix $r>0$ and restrict all maps $h_s$ (for $s>r$) to the domain $\Sigma _r$.
    \item Bound the energies $E(\Sigma _r , h_s)$ of these restrictions  $\left. h_s \right|_{\Sigma _r}$ in terms of the energies on \emph{annuli} $\Omega_{r,s}$ of the Scherk map and the $h_s$ away from the handles.
    \item Deduce a sufficient condition for uniform sub-convergence of the restrictions, expressed as a difference between the energies of the restrictions $\left.h_{s}\right|_{\Sigma _r}$ and the Scherk map $w$ on \emph{annuli} $\Omega_{r,s}$.
  \end{enumerate}
\item[\S\ref{section:construction-comparison_maps}:]We will produce comparison maps $u_s$ to obtain a more amenable Relaxed Sufficient Condition (Lemma \ref{lemma:relaxed-sufficient-condition}), so that we only need to bound the difference between the energies of the $u_s$ and the Scherk map $w$ on \emph{annuli} $\Omega_{r,s}$. Then, we cite the Energy Estimate to verify that the Relaxed Sufficient Condition this is satisfied by the restrictions $\left. h_s \right|_{\Sigma _r}$.
  \begin{enumerate}
    \item Define the \emph{parachute maps} $u_s$. Their analysis is deferred to \S\ref{section:toyproblem}.
    \item The energies of these $u_s$ are compared to the energies of the restrictions  $\left. h_s \right|_{\Sigma _r}$ via the Doubling Lemma (Lemma \ref{lem:doublinglemma}).
    \item Use the Energy Estimate (Lemma \ref{lem:energyestimate}) to estimate the energy of the parachute maps.
  \end{enumerate}
\item[\S\ref{section:construction-proof}:]We will provide a proof of Theorem \ref{thm:crusher_existence}.
  \begin{enumerate}
    \item Use the estimates from \S\ref{section:construction-comparison_maps} to show that the restrictions $\left. h_s \right|_{\Sigma _r}$ satisfy the Relaxed Sufficient Condition (Lemma \ref{lemma:relaxed-sufficient-condition}) for sub-convergence from \S\ref{section:construction-compacta_subconvergence_condition}.
    \item Check for non-collapsing of the limit.
    \item Analyze the Hopf differential and finish proof of \ref{thm:crusher_existence}.
  \end{enumerate}
\end{itemize}

\subsection{Set-up for construction}\label{section:construction-set_up}

\subsubsection{Notation and set-up} Let us begin by establishing some notation for various domain and range spaces involved, and collect some required known results. For $\HH$, we will use the Poincar\'e unit disk model $$\HH = \left(\{|\zeta|<1| \zeta \in \C \}, \frac{2}{\sqrt{1-|\zeta|^2}} d\zeta \right).$$

{\em Scherk map:} There is a harmonic map $w: \C \rightarrow \HH$ having Hopf differential $\Phi ^w (z) = z^{k-2} dz ^2$ whose image is the ideal $k$-gon in $\HH$ and which is symmetric with respect to the $x$-axis, in the sense that pre- and post- composition with the $x$-axis reflectional symmetries on $\C$ and $\HH$ leaves the map invariant (existence by Theorem \ref{thm:scherkmap}).

This map can be realized as the composition of the projection of the Scherk-type minimal surface in $\HH \times \R$ onto $\HH$ following the parameterization from $\C$ of the minimal surface \cite{Wolf07}. The minimal surface is conformally of type $\C$ because it has finite total curvature. The Scherk surface exists \cite{MRR11} and has a polynomial Hopf differential \cite{Wolf07}. That its Hopf differential is $-z^{k-2} dz^2$ follows by study of symmetric maps in \cite{ST02} and \cite{AW05}.

{\em Domain data:} The punctured Riemann surface serving as our domain is constructed as an identification space, and is denoted by $\Sigma = \Sigma _g$. The conformal model for $\Sigma$ used in this construction is obtained by identifying $2g +1$ subsets of $\C$ by translation:
\begin{eqnarray*}
L &=& \C \setminus \left\{ \left(\left[-\frac{1}{2},\frac{1}{2}\right] + \frac{i}{2} \right) \cup \left( \left[-\frac{1}{2},\frac{1}{2}\right] - \frac{i}{2}\right) \right\} \\
H_+^j &=& \left\{ z\in \C \suchthat -\frac{1}{2} \leq Re(z) \leq \frac{1}{2}, -\frac{i}{2} \leq Im(z) \leq \frac{i}{2} \right\} \\
H_-^j &=& \left\{ -z\in \C \suchthat -\frac{1}{2} \leq Re(z) \leq \frac{1}{2}, -\frac{i}{2} \leq Im(z) \leq \frac{i}{2} \right\}
\end{eqnarray*}
for $1\leq j \leq g$. The identifications are depicted in Figure \ref{fig:identification-surface-model}, with decorations indicating opposite sides of a geodesic segment in the identification. We will denote by $\Sigma _s$ the compact subset $$\Sigma _s := (\Omega _s \cap L)\cup \left[\mathop{\cup}_{j=1}^g \left[H_+^j \cup H_-^j\right]\right]$$ of $\Sigma$ obtained by the same identifications. Note that $$\Sigma = \mathop{\cup} _{s =1} ^ \infty \Sigma _s$$ provides a compact exhaustion of $\Sigma$.

\begin{figure}
\begin{center}
\includegraphics[scale=.65]{./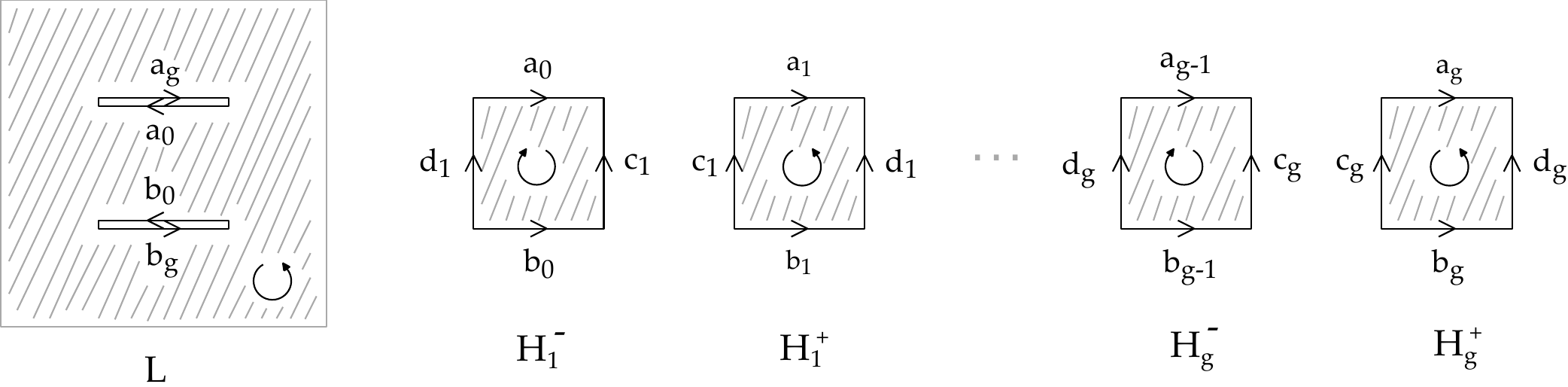}
\caption{\label{fig:identification-surface-model}Identification space model for $\Sigma _g$}
\end{center}
\end{figure}

\emph{Other domains:} Let $\Omega _r$ be the disk $\{ z \suchthat |z| \leq r \} \subset \C$, so that it has boundary $\gamma _r \:= \partial (\Omega _r)$. Let $\Gamma _s$ denote the image of $\gamma _s$ under $w$. The curves $\Gamma _s := w(\gamma _s)$ parameterized by $w|_{\gamma _s}$ will serve as boundary data for the sequence of harmonic mappings $h_s$ on compacta of the punctured Riemann surface $\Sigma$. The Scherk map is defined on $\Omega _r$, and the parachute map will be defined on the cylinder $\Omega_{r,s}$. We will also identify $\Sigma _s \setminus \Sigma _r $ by $\Omega _{r,s}$ for all $s>r$. A depiction of the Scherk map is provided in Figure \ref{fig:scherk-map}.

\begin{figure}[h!]
\begin{center}
\includegraphics[height=1.4in]{./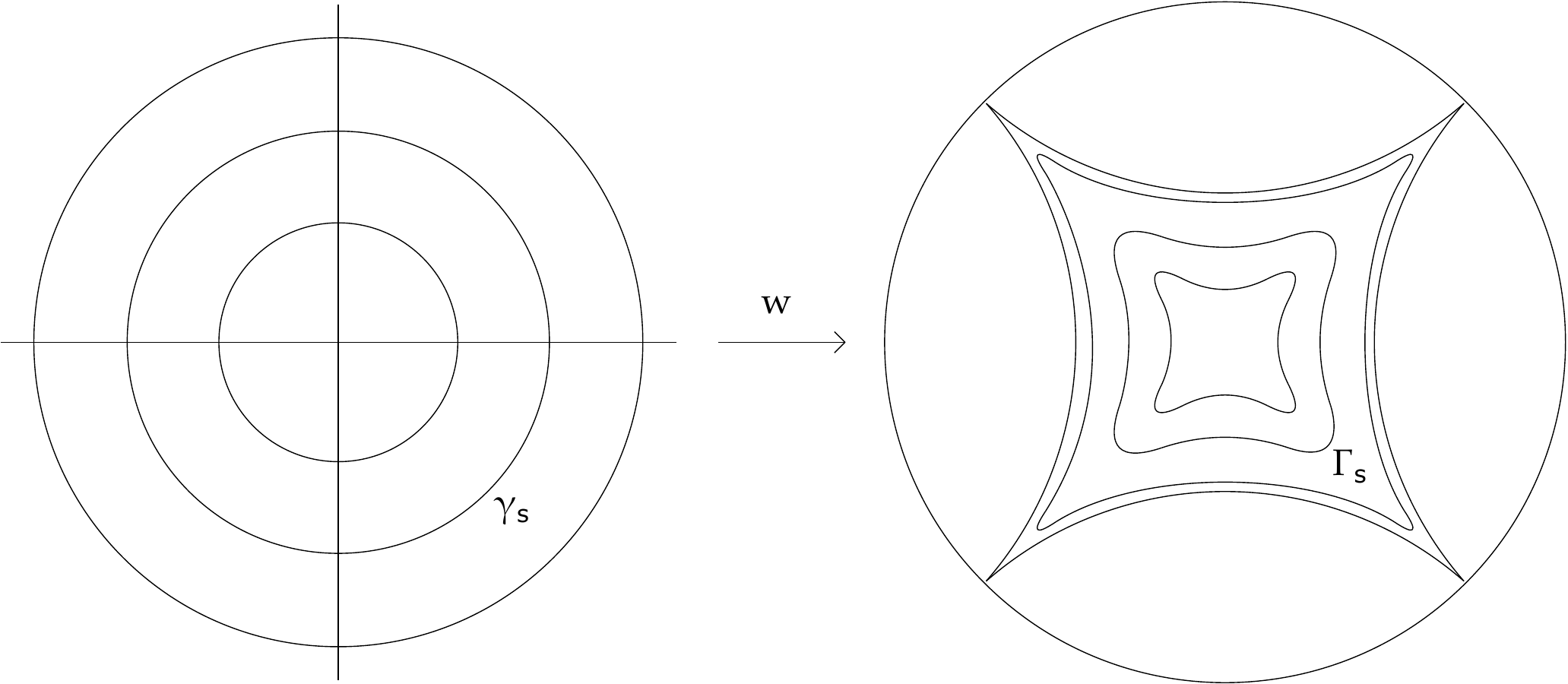}
\caption{\label{fig:scherk-map}The Scherk map $w:\C\rightarrow\HH$ with $\Phi ^w = -z^2 dz^2$.}
\end{center}
\end{figure}

{\em Target data:} The regular ideal $k$-sided polygon $\mathcal{P}_k$ we will use as the target for our harmonic maps is given by the convex hull of the the ideal points $$\{\xi _0 , \xi _2, \ldots, \xi  _{k-1} \}\subset \partial \HH$$
where we are using the Poincare disk model for $\HH$ and
$$ \xi_j := e^{2\pi i\frac{1}{2k}}\cdot e^{2\pi i \frac{j}{k}}. $$
It is fixed set-wise by the involution reflecting over the $x$-axis and also by the rotations about the origin by integer multiples of $\frac{2\pi}{k}$. Note the factor of $e^{2\pi i\frac{1}{2k}}$ to create the reflectional symmetry over the $x$-axis. The polygon $\mathcal{P}_k$ is depicted in Figure \ref{fig:ideal-polygon-examples} for various values of $k$.

\begin{figure}[h!]
  \centering
  \subfloat[$\mathcal{P}_4$]{\includegraphics[scale=.3]{./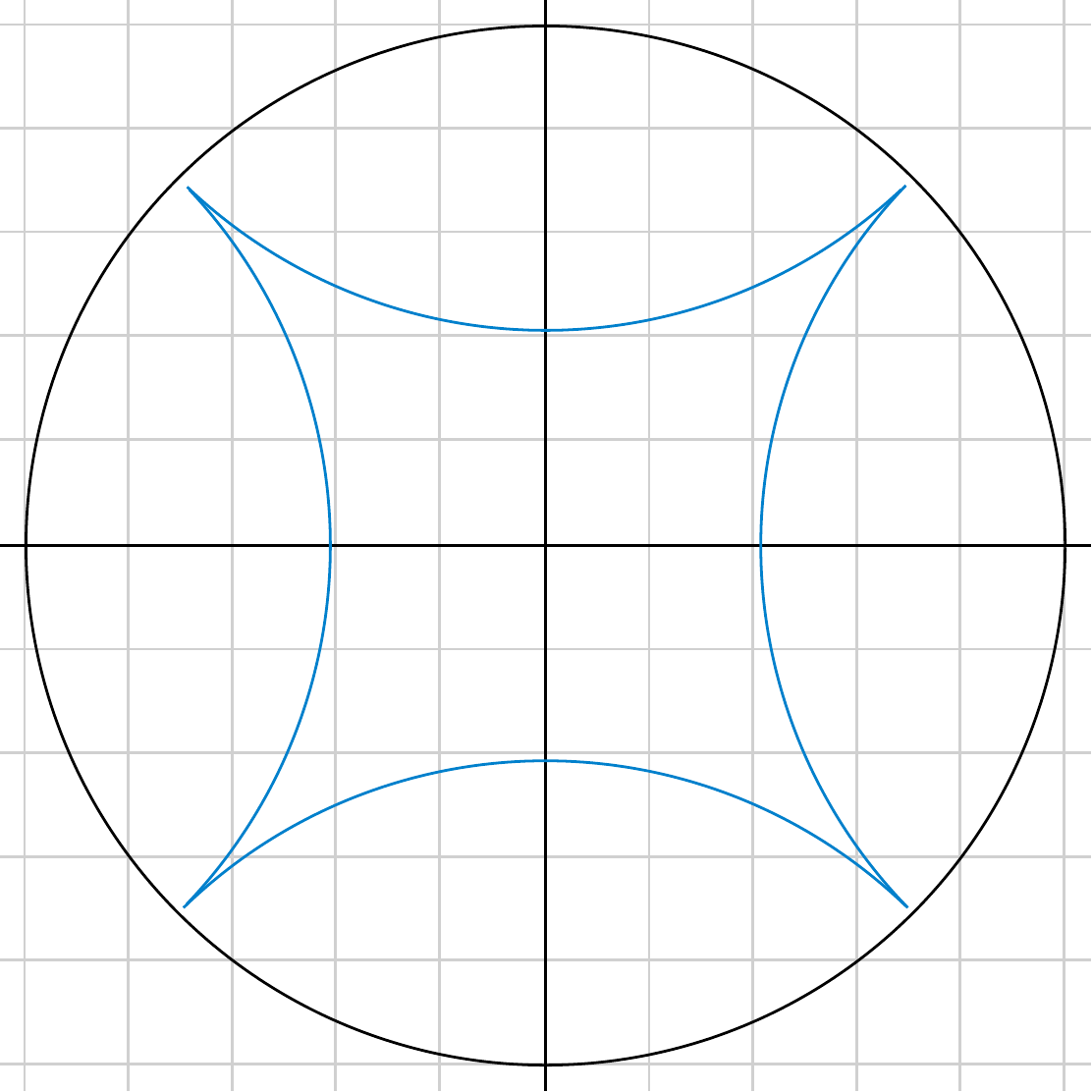}} \hskip 11pt
  \subfloat[$\mathcal{P}_6$]{\includegraphics[scale=.3]{./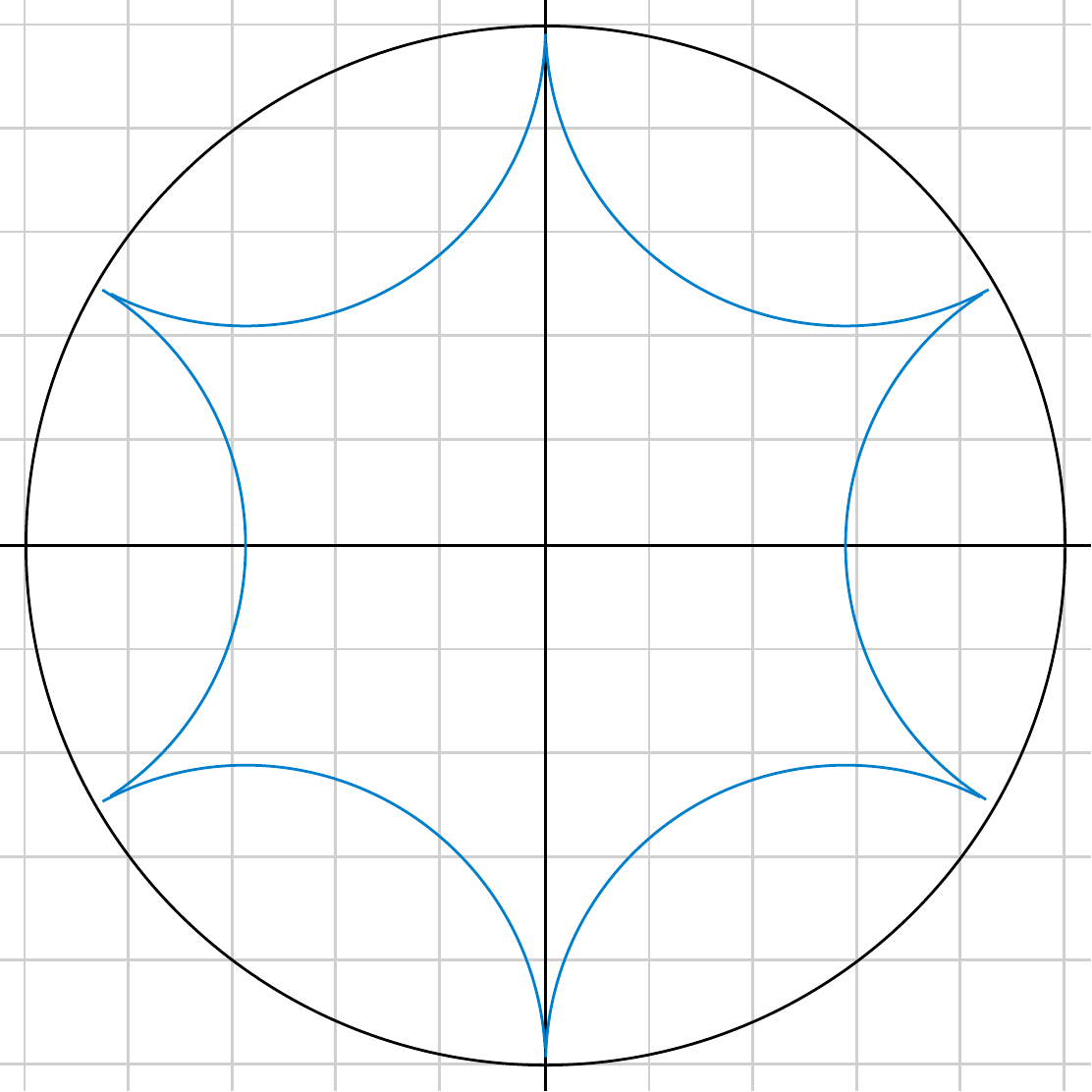}}\hskip 11pt
  \subfloat[$\mathcal{P}_8$]{\includegraphics[scale=.3]{./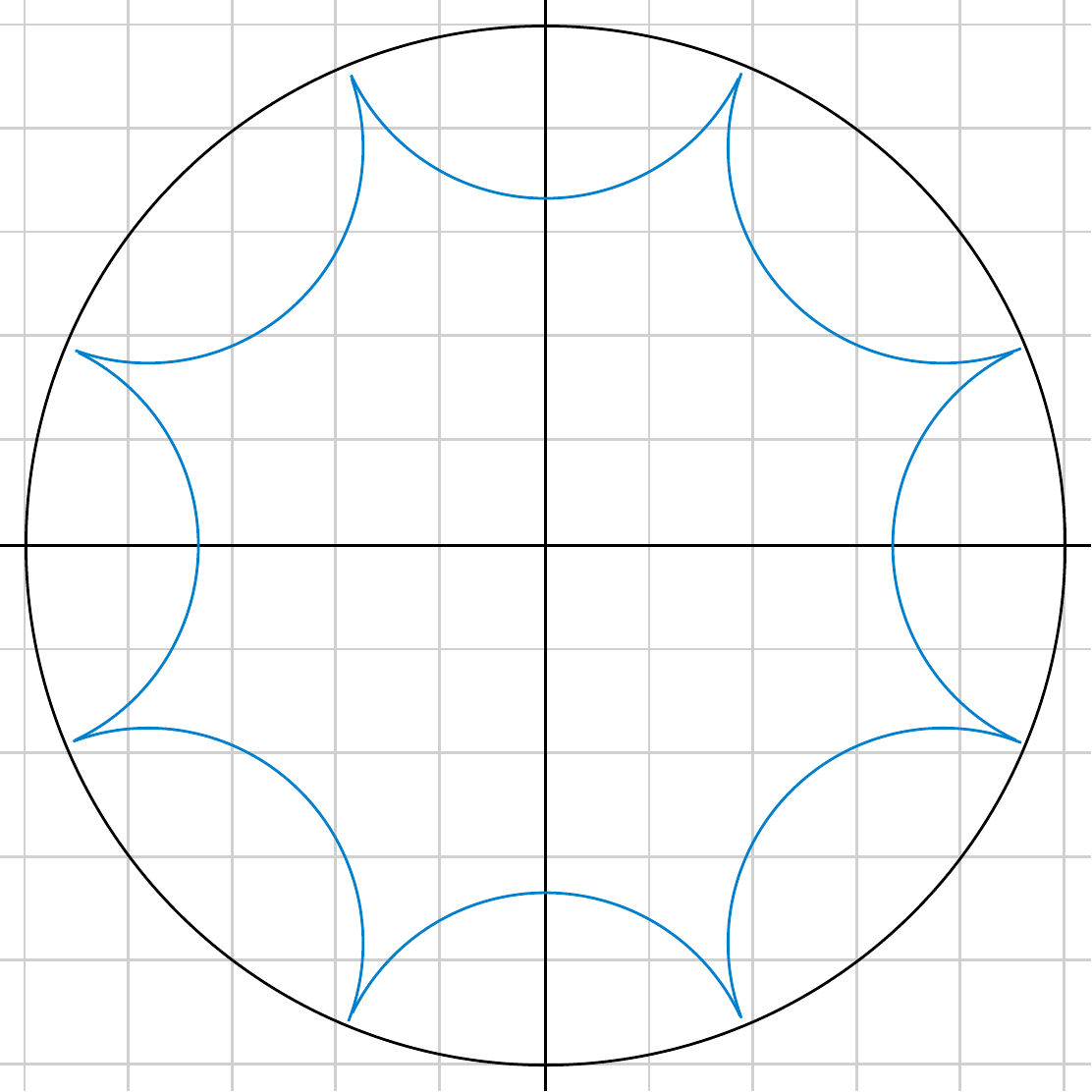}} \caption{\label{fig:ideal-polygon-examples}Example $\mathcal{P}_k$ for $k =4$, $6$, and $8$.}
\end{figure}

{\em Comparison model (the parachute maps):} Along the way, we will study harmonic maps $u_s$ from cylinders into $\HH$ which have comparable energy to the harmonic mappings $h_s$ of $\Sigma _s$ on the subsets $\Omega_{r,s} \equiv \Sigma_s \setminus \Sigma_r $ (for $r < s$). This allows us to pass from studying the (energy of the) harmonic mapping of $\Omega_s$ to harmonic mappings of $\Omega _{r,s}$ - reducing the consideration of a punctured torus domain to an analytically and topologically simpler annular domain.

\subsubsection{Sequence $\{h_s : \Omega _s \rightarrow \HH\}$ of harmonic mappings on compacta}
When $r>\frac{1}{2}$, the domain $\Omega _r$ is a neighborhood of the handles formed by identifying $\cup_{j=1}^g\left[H _+ ^j \cup H_- ^j\right]$ to $L$. So, fix $r>>\frac{1}{2}$; the precise size of $r$ will be chosen later, when it matters in estimating the geometry of the boundary values $\Gamma _r$ we will prescribe.

For each $s >r$, we pose the Dirichlet harmonic mapping problem
\begin{equation}\label{eqn:Ds}
\begin{split}
h_s :\Omega _s \rightarrow \HH \\
h_s |_{\gamma _s}=w|_{\gamma _s}\\
\end{split}\tag{Ds}
\end{equation}
noting the parameterized boundary condition. This boundary condition makes sense to impose because, although the domains for $h_s$ and $w|_{\Omega _s}$ (being $\Sigma _s$ and $\Omega _s$) are different, their charts have $\gamma _s$ as identical boundaries. There is a unique solution $h_s$ to  (\ref{eqn:Ds}) which minimizes energy in its homotopy class fixing the boundary, shown to exist by Lemaire in \cite{Lemaire82}. 

This sequence of harmonic mappings $$ h_s : \Sigma _s \rightarrow \HH$$ is the focus of our construction, from which we will extract a sub-convergent sequence. We note that this sequence of harmonic maps has an increasing amount of energy $E(\Sigma _s, h_s) \nearrow +\infty$, so that it is not immediate that there should exist a convergent subsequence.

\subsection{Conditions for sub-convergence}\label{section:construction-compacta_subconvergence_condition}
Since we wish to show that $h_s$ converges uniformly on compact subsets, we now restrict the maps $h_s$ to a common compact domain $\Sigma _r$, for a fixed $r$ and for all $s>r$. We appeal to the Arzela-Ascoli Theorem to guarantee a convergent subsequence of $\{ h_s |_{\Sigma_r} \}$ on each $\Sigma_r$, i.e., it suffices to show that $\{ h_s |_{\Sigma_r} \}$ is uniformly bounded and equicontinuous. These will both follow from an application of the Courant-Lebesgue Lemma \ref{thm:courantlebesgue} and a geometric consideration of $h_s$. We can pin down one point so that its image does not escape to infinity along the entire subsequence:

\begin{proposition}\label{prop:origin_is_real}
For all $s$, we have $d_{\HH}\left(h_s (0) , \mathcal{O}\right)\leq C(k),$ where the constant $C(k)$ is given by $$\label{constant:origin_bound} C(k):=\log\left(\frac{1 + \sec\left(\frac{\pi}{k}\right) - \tan\left(\frac{\pi}{k}\right)}{1 - \sec\left(\frac{\pi}{k}\right) + \tan\left(\frac{\pi}{k}\right)}\right)$$
and $mathcal{O}$ is the origin in $\HH$. In particular, using the Poincare disk model for $\HH$, we have that $0\in L$ maps to a point of $\{x+i\cdot 0\suchthat |x| <C(k) \}\subset \HH$.
\end{proposition}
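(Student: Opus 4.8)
The plan is to exploit the reflectional symmetry of the entire configuration across the real axis, and then to confine $h_s(0)$ using the convex-hull property of harmonic maps into a Hadamard manifold. First I would record the symmetry at the level of the domain and target. Let $c:\Sigma_s \to \Sigma_s$ be the involution induced by complex conjugation $z \mapsto \bar z$ on each chart $L$, $H_+^j$, $H_-^j$: since the slits removed from $L$ sit at $\mathrm{Im}\,z = \pm\tfrac12$ and the squares $H_\pm^j$ are conjugation-invariant, the gluing data is respected, so $c$ is a well-defined anti-holomorphic isometry of $\Sigma_s$ that fixes $0\in L$ and carries $\gamma_s$ to itself. Let $R$ denote the reflection of $\HH$ across the real-axis geodesic (the fixed locus of $\zeta \mapsto \bar\zeta$ in the disk model). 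The stated $x$-axis symmetry of the Scherk map means $R\circ w \circ c = w$, equivalently $w\circ c = R\circ w$; restricting to $\gamma_s$ and using that $R$ is an involution gives $R\circ (w|_{\gamma_s})\circ c = w|_{\gamma_s}$.

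Next I would transfer this symmetry to $h_s$ by uniqueness. Set $\tilde h_s := R\circ h_s \circ c$. As a pre- and post-composition of a harmonic map with isometries, $\tilde h_s$ is harmonic, and on the boundary $\tilde h_s|_{\gamma_s} = R\circ(w|_{\gamma_s})\circ c = w|_{\gamma_s} = h_s|_{\gamma_s}$. Because the target $\HH$ is a Hadamard manifold, the Dirichlet problem~(\ref{eqn:Ds}) has a unique solution, so $\tilde h_s = h_s$. Evaluating at the fixed point $0$ yields $h_s(0) = R(h_s(0))$, i.e.\ $h_s(0)$ lies on the real-axis geodesic of $\HH$; this already gives the last assertion that $h_s(0)\in\{x+i\cdot 0\}$.

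It then remains to bound the displacement along this geodesic. The prescribed boundary values $\Gamma_s = w(\gamma_s)$ lie in the geodesically convex set $\overline{\mathcal{P}_k}$; since the distance to a convex subset of a Hadamard manifold is a convex function, $z\mapsto d_{\HH}(h_s(z),\overline{\mathcal{P}_k})$ is subharmonic on $\Sigma_s$ and vanishes on $\partial\Sigma_s$, so the maximum principle forces $h_s(\Sigma_s)\subseteq\overline{\mathcal{P}_k}$. Hence $h_s(0)$ lies on the segment $\overline{\mathcal{P}_k}\cap\{\mathrm{Im}\,\zeta = 0\}$, which runs between the midpoints of the two sides of $\mathcal{P}_k$ meeting the real axis — the geodesic joining $\xi_0 = e^{i\pi/k}$ to $\overline{\xi_0}=e^{-i\pi/k}$ and its antipode. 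A direct computation shows the circle through $e^{\pm i\pi/k}$ orthogonal to $\partial\HH$ has Euclidean center $\sec(\pi/k)$ and radius $\tan(\pi/k)$, so its midpoint sits at Euclidean radius $x_0 = \sec(\pi/k)-\tan(\pi/k)\in(0,1)$, at hyperbolic distance $\log\frac{1+x_0}{1-x_0} = C(k)$ from $\mathcal{O}$. Therefore $d_{\HH}(h_s(0),\mathcal{O})\le C(k)$.

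The genuinely delicate step is the first one: checking that conjugation descends to a well-defined isometry of the identification space $\Sigma_s$ that is compatible with the prescribed boundary parameterization, so that uniqueness for~(\ref{eqn:Ds}) can legitimately be invoked to force equivariance of $h_s$. Once that is in place, the convex-hull confinement and the elementary hyperbolic-trigonometry evaluation of $C(k)$ are routine.
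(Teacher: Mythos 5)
Your proposal is correct and follows essentially the same route as the paper: reflection symmetry together with uniqueness of the solution to~(\ref{eqn:Ds}) forces $h_s(0)$ onto the real geodesic, and containment of the image in $\overline{\mathcal{P}_k}$ plus the same hyperbolic-trigonometry computation of the crossing point $\sec(\pi/k)-\tan(\pi/k)$ yields the bound $C(k)$. The only difference is that you make explicit two steps the paper leaves implicit --- the verification that conjugation descends to a well-defined involution of the identification space $\Sigma_s$, and the convex-hull/maximum-principle argument for $h_s(\Sigma_s)\subseteq\overline{\mathcal{P}_k}$ --- which is a strengthening, not a departure.
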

\begin{proof}
Recall that the polygon $\mathcal{P}_k$ has the ideal points $$\{\xi _0 , \xi _2, \ldots, \xi  _{k-1} \}\subset \partial \HH$$
where
$$ \xi_j := e^{2\pi i\frac{1}{2k}}\cdot e^{2\pi i \frac{j}{k}}. $$
Indexed this way, the involution
\begin{align*}
\iota : \HH &\rightarrow \HH\\
 p \mapsto \overline{p}
\end{align*}
takes $\xi _j \mapsto \xi _{(k-j) \mod k}$. Note that $k$ is even, so there are no ideal fixed points.

Observe that the maps $h_s(z)$ and $\overline {h_s} (\zbar)$ agree pointwise for all $z\in \gamma _s=\Sigma _s$. Since there is a unique energy minimizing solution to the Dirichlet boundary value problem \eqref{eqn:Ds}, it must be that $$h_s(z)=\overline {h_s} (\zbar)$$ on the interior of $\Sigma _s$ as well. Thus, we must have
$$h_s(0)=\overline {h_s} (\overline{0}).$$

This implies that $h_s (0)$ is real. Since $h_s (\Sigma _s) \subset \mathcal{P}_k$, the image of $0\in L$ must lie in the geodesic segment
\begin{align*}
&\mathcal{P}_k \cap \{x+i\cdot 0 | x\in(-1,1)\}\\
&\equiv \left(\tan\left(\frac{\pi}{k}\right) - \sec\left(\frac{\pi}{k}\right) , \sec\left(\frac{\pi}{k}\right) - \tan\left(\frac{\pi}{k}\right)\right)\times \{ 0\} \subset \HH
\end{align*}
along the $x$-axis is fixed by the isometry $i$. That the endpoints are these follows from an elementary exercise in hyperbolic geometry. Finally, note that the length of this line segment is
$$ 2\log\left(\frac{1 + \sec\left(\frac{\pi}{k}\right) - \tan\left(\frac{\pi}{k}\right)}{1 - \sec\left(\frac{\pi}{k}\right) + \tan\left(\frac{\pi}{k}\right)}\right),$$
and that $\mathcal{O}$ is the midpoint of this line segment.
\end{proof}

We now deduce a sufficient condition for sub-convergence of our sequence of restrictions $\left.h_s\right|_{\Sigma _r}$:

\begin{proposition}\label{prop:precompactness}
If $E(\Omega _r, h_s)<M(r)$ is bounded for all $s>r$, then $\{ h_s | _{\Omega _r}\}$ is precompact.
\end{proposition}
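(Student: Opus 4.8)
The plan is to verify the two hypotheses of the Arzelà–Ascoli theorem for the family $\{h_s|_{\Omega_r}\}_{s>r}$ of maps into the complete metric space $\HH$: uniform equicontinuity, and pointwise relative compactness (the images lying in a fixed compact subset of $\HH$). Once both are in hand, Arzelà–Ascoli furnishes a subsequence converging uniformly on $\Omega_r$, which is precisely the asserted precompactness. The hypothesized energy bound $E(\Omega_r, h_s) < M(r)$ and Proposition \ref{prop:origin_is_real} are the two inputs that make this work uniformly in $s$.

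First I would establish equicontinuity from the energy bound. Fix an interior point $p$ and a small $\delta \in (0,1)$; the Courant–Lebesgue Lemma (Theorem \ref{thm:courantlebesgue}), applied with the uniform bound $K = M(r)$, produces a radius $\rho \in (\delta, \sqrt{\delta})$ for which the $h_s$-image of the circle $\{d_{\Omega_r}(\cdot, p) = \rho\}$ has diameter at most $\sqrt{8\pi M(r)/\log(\delta^{-1})}$, a quantity \emph{independent of $s$}. To upgrade this oscillation estimate along circles into an estimate on the enclosed disk, I would invoke the maximum principle for harmonic maps into the nonpositively curved target $\HH$: for any fixed $q \in \HH$ the function $z \mapsto d_\HH(h_s(z), q)$ is subharmonic, so its maximum over the disk is attained on the bounding circle. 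Taking $q = h_s(z_0)$ for some $z_0$ on the good circle then bounds $d_\HH(h_s(y), h_s(p))$ by twice that diameter for every $y$ interior to the disk. This yields a modulus of continuity depending only on $M(r)$ and $\delta$, uniform in $s$.

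Next I would deduce uniform boundedness of the restricted images. Proposition \ref{prop:origin_is_real} pins the image of the fixed point $0 \in L$, giving $d_\HH(h_s(0), \mathcal{O}) \leq C(k)$ for all $s$. Chaining the uniform modulus of continuity along finitely many small balls covering the connected compactum $\Omega_r$ bounds $\mathrm{diam}\, h_s(\Omega_r)$ by a constant $D(r)$ independent of $s$. Combining the two estimates gives $\sup_{z \in \Omega_r} d_\HH(h_s(z), \mathcal{O}) \leq C(k) + D(r)$, so every image $h_s(\Omega_r)$ lies in the single closed geodesic ball $\overline{B}_\HH(\mathcal{O}, C(k)+D(r))$, which is compact since $\HH$ is complete. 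Arzelà–Ascoli then applies.

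The main obstacle is the equicontinuity step — specifically, the passage from the circle-diameter control supplied by Courant–Lebesgue to genuine pointwise equicontinuity on the disk. Courant–Lebesgue only controls oscillation along specially chosen circles, not directly between an interior point and its neighbors, and it is here that the nonpositive curvature of the target is essential, via the subharmonicity of the distance function. I would also take care that $\Omega_r$ is an interior compactum, so that the $h_s$ are genuinely harmonic and hence $\mathcal{C}^2$ there, as both the Courant–Lebesgue Lemma and the maximum principle require.
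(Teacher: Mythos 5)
Your proof is correct and takes essentially the same route as the paper: the energy bound feeds the Courant--Lebesgue Lemma for equicontinuity, Proposition \ref{prop:origin_is_real} pins $h_s(0)$ to give uniform boundedness, and Arzel\`a--Ascoli concludes. Your write-up is in fact more complete than the paper's, which passes directly from the Courant--Lebesgue circle estimate to the claimed bound $d_{\HH}(h_s(z),h_s(w)) \leq \sqrt{8\pi M/\log\left(\mu^{-1}\right)}$ for all $z,w\in\Omega_r$, omitting the maximum-principle upgrade from circle oscillation to interior oscillation and the chaining argument that you supply.
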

In the following sections, the bulk of our work will be toward obtaining an energy bound of the form $E(\Omega _r, h_s) < M(r)$ for all $s>r$.

\begin{proof}
With the energy bound in hand, the Courant-Lebesgue Lemma implies that $h_s |_{\Omega_r}$ is equicontinuous and uniformly bounded, because $E(\Omega _r, h_s) < M$ gives us the inequality
$$d_{\HH}(h_s (z),h_s (w)) \leq \sqrt{\frac{8 \pi M}{\log \left(\mu^{-1}\right)}}.$$
By applying the triangle inequality and Proposition \ref{prop:origin_is_real}, the uniform bound
\begin{align*}
d_{\HH}(h_s (z),\mathcal{O}) &= d_{\HH}(h_s (z),h_s (0)) + d_{\HH}(h_s (0), \mathcal{O})\\
& \leq \sqrt{\frac{8 \pi M}{\log \left(\mu ^{-1}\right)}} + C(k)
\end{align*}
for all $z, w \in \Omega_r$, and where the constants $\mu=\mu(r)$ is defined as in Theorem \ref{thm:courantlebesgue} depending only on the diameter of $\Sigma _r$ and $C(k)$ is defined in Proposition \ref{prop:origin_is_real}. Thus, we can apply the Arzela-Ascoli theorem to the family $\{ h_s |_{\Omega _r}\}$.
\end{proof}

\subsubsection{Relating sub-convergent condition to the Scherk maps}

For now, let us first consider $h_s$ on its full domain $\Omega_s$, on which we can obtain a crude energy bound.

\begin{proposition}\label{prop:crude_upper_bound} For all $s>1$, we have the energy upper bound
$$E(\Sigma _s,h_s) \leq \ E(\Omega _s,w) + K$$ where the constant $K=K(g,k)$ depends only on $g$ and $k$ (through $w$) and is given by $$K = 2g \cdot E(H_+ ^1,w) < \infty $$
\end{proposition}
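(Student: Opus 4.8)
The plan is to exploit the variational characterization of $h_s$. Since $h_s$ is the energy minimizer among maps $\Sigma_s \rightarrow \HH$ agreeing with $w$ on $\gamma_s$ (the homotopy class is immaterial here, because $\HH$ is $\mathrm{CAT}(0)$, so any two maps with the same boundary trace are joined by the fiberwise geodesic homotopy rel boundary, and thus every such map is an admissible competitor in $h_s$'s class), it suffices to exhibit a single competitor $\tilde h : \Sigma_s \rightarrow \HH$ with $\tilde h|_{\gamma_s} = w|_{\gamma_s}$ and $E(\Sigma_s, \tilde h) \leq E(\Omega_s, w) + 2g\, E(H_+^1, w)$. The minimizing property then yields $E(\Sigma_s, h_s) \leq E(\Sigma_s, \tilde h)$, which is the claim.

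The competitor I would build uses the Scherk map itself as a template on every piece of the conformal model $\Sigma_s = (\Omega_s \cap L) \cup \left[\cup_{j} (H_+^j \cup H_-^j)\right]$. On the planar part $\Omega_s \cap L$, set $\tilde h := w$; this already realizes the prescribed boundary values on $\gamma_s$. On each of the $2g$ handle squares $H_\pm^j$ — all isometric copies of the unit square $H_+^1 = [-\tfrac12,\tfrac12]^2$ — set $\tilde h$ to be either $w$ or its reflection $\iota_\HH \circ w$, where $\iota_\HH(p) = \bar p$ is the $x$-axis reflection of $\HH$ fixing $\mathcal{P}_k$. The choice of label on each square is dictated by the edge identifications so that the boundary traces agree across every gluing curve.

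The one point requiring care — and the technical heart of the argument — is verifying that $\tilde h \in W^{1,2}(\Sigma_s, \HH)$, i.e. that the traces of $\tilde h$ from the two sides of each identification curve coincide (otherwise $\tilde h$ is not of finite energy on $\Sigma_s$). This is where the reflectional symmetry of the Scherk map enters: because the two slits $[-\tfrac12,\tfrac12]\pm \tfrac i2$ are interchanged by $z \mapsto \bar z$ and $w$ satisfies $w(\bar z) = \iota_\HH(w(z))$, a gluing of the form $p \leftrightarrow \bar p$ matches $w$ on one side with $\iota_\HH \circ w$ on the other, since at a glued pair $(\iota_\HH \circ w)(\bar p) = \iota_\HH(\iota_\HH(w(p))) = w(p)$. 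Arranging the labels $w$ versus $\iota_\HH \circ w$ consistently around each handle — and invoking the $D_k$-symmetry of $w$ for the remaining identifications — produces a globally well-defined, finite-energy competitor with the correct boundary values on $\gamma_s$.

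Finally I would tally the energy. Since the two slits have two-dimensional measure zero and $w$ is smooth, $E(\Omega_s \cap L, w) = E(\Omega_s, w)$; and since post-composition with the isometry $\iota_\HH$ leaves the energy density unchanged, each handle square contributes exactly $E(H_\pm^j, \tilde h) = E(H_+^1, w)$. Summing over the $2g$ squares gives $E(\Sigma_s, \tilde h) = E(\Omega_s, w) + 2g\, E(H_+^1, w)$, and this constant $K$ is finite because $w$ is smooth and $H_+^1$ is compact. I expect the only real obstacle to be the trace-matching across the handle identifications; once the symmetric placement of the slits at $\pm\tfrac i2$ and the symmetry $w(\bar z) = \iota_\HH(w(z))$ are deployed as above, the remaining energy bookkeeping is routine.
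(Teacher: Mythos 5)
Your proposal is correct and is essentially the paper's own proof: the paper likewise builds a piecewise-Scherk competitor $v_s$ on the conformal model (taking $w$ on $L$ and on each $H_+^j$, and $w(-z)$ on each $H_-^j$), invokes the energy-minimizing property of $h_s$, and tallies the energy as $E(\Omega_s,w) + 2g\,E(H_+^1,w)$. The only cosmetic differences are which symmetry of $w$ is used to match traces across the identifications (the paper precomposes with $z \mapsto -z$ on the $H_-^j$ pieces where you postcompose with $\iota_{\HH}$, equivalently precompose with conjugation) and your explicit remark that the homotopy class is immaterial because the target is contractible, a point the paper leaves implicit.
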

\begin{proof}
To estimate $E(\Omega _s, h_s)$, let us construct a map $v_s : \Omega _s \rightarrow \HH$ satisfying the boundary conditions for the Dirichlet harmonic mapping problem \eqref{eqn:Ds}, and whose energy we can compute. Since $v_s$ will be a candidate to the mapping problem \eqref{eqn:Ds}, and $h_s$ minimizes energy among all candidates to \eqref{eqn:Ds}, we will have
\begin{align*}
E(\Omega _s,h_s) &\leq E(\Omega _s,v_s).
\end{align*}
To define an appropriate $v_s$, recall that the conformal model for $\Sigma$ (and $\Sigma _s$ by restriction) is given by identifying the $2g +1$ pieces:
\begin{align*}
L &= \C \setminus \left\{ \left(\left[-\frac{1}{2},\frac{1}{2}\right] + \frac{i}{2} \right) \cup \left( \left[-\frac{1}{2},\frac{1}{2}\right] - \frac{i}{2}\right) \right\} \\
H_+ ^j &= \left\{ z\in \C \suchthat -\frac{1}{2} \leq Re(z) \leq \frac{1}{2}, -\frac{i}{2} \leq Im(z) \leq \frac{i}{2} \right\} \\
H_- ^j &= \left\{ -z\in \C \suchthat -\frac{1}{2} \leq Re(z) \leq \frac{1}{2}, -\frac{i}{2} \leq Im(z) \leq \frac{i}{2} \right\}
\end{align*}
for $1\leq j \leq g$. All of these can be considered as subsets of $\C$, the domain for the Scherk map $w$. So, we can define a map $v_s : \Omega _s \rightarrow \HH$ piecewise by restricting $w$ to $L$, $H_+ ^j$, and $H_- ^j$. Respecting changes in orientation, this means, for $1\leq j \leq g$:
\begin{align*}
v_s|_{L}(z) &:= w|_{L}(z) \\
v_s|_{H_+ ^j}(z) &:= w|_{H_+ ^j}(z) \\
v_s|_{H_- ^j}(z) &:= w|_{H_- ^j}(-z)
\end{align*}
To be clear, $v_s$ is defined so that $v_{s'} |_{\Omega _s} = v_s$ for all $s' > s$. Hence,
\begin{align*}
 E(\Omega _s,v_s) & = E(L\cap \Omega _s, w) + \sum _{j=1}^g \left[E(H_+ ^j,w)+E(H_- ^j,w)\right] \\
 \Rightarrow  E(\Omega _s,v_s) & \leq  E(\Omega _s,w) + K
\end{align*}
where the constant $K$ is defined by
\begin{align*}
K &:= \sum_{j=1}^g \left[ E(H_+ ^j,w)+E(H_- ^j,w) \right] \\
& = 2g \cdot E(H_+ ^1,w) < \infty
\end{align*}
is simply the energy of the Scherk map on the pieces that form the handles $\cup _{j=1}^g \left[ H_+^j \cup H_- ^j \right]$ for the domain $\Omega _s$. Thus, we have
\begin{align*}
E(\Sigma _s,h_s) & \leq E(\Sigma _s,v_s)\\
& \leq E(\Omega _s,w) + K
\end{align*}
\end{proof}

Recall that in order to obtain a convergent subsequence, we need to verify the hypothesis for Proposition \ref{prop:precompactness}: for an arbitrary fixed $r$, the restrictions $\{h_s|_{\Sigma _r}\}_{s\geq r}$ have uniformly bounded energy. However, we only have an energy estimate for $h_s$ on its entire domain of definition $\Sigma _s$ by Proposition \ref{prop:crude_upper_bound}. We do not have a priori estimates on the energies of the restricted maps.

Our first step to assuage this problem is by producing an a priori bound on $E(\Sigma _s, h_s)$ in terms of the energies of other harmonic maps which we can better understand - the Scherk map $w$ and the restriction $h_s|_{\Sigma_{r,s}}$. By decomposing the domain $\Sigma _s$ as $\Sigma _s = \Sigma _r \cup \Sigma_{r,s}$, and applying the Proposition \ref{prop:crude_upper_bound}, the energies of the restrictions can be bounded above by comparing maps of \emph{annuli}:

\begin{proposition}\label{prop:compacta-energy-scherk-bound} There exists a non-negative continuous function $G(r)$ so that, for all $s>r$, we have:
$$E(\Sigma _r,h_s) \leq E(\Omega_{r,s},w) - E(\Sigma _{r,s},h_s) + G(r).$$
\end{proposition}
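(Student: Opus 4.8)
The plan is to combine the crude upper bound from Proposition \ref{prop:crude_upper_bound} with a decomposition of the energy over the splitting $\Sigma_s = \Sigma_r \cup \Sigma_{r,s}$, together with a corresponding splitting of the Scherk energy over $\Omega_s = \Omega_r \cup \Omega_{r,s}$. The idea is to peel off the inner piece $\Sigma_r$ (which contains the handles) and absorb the remaining discrepancy into a function $G(r)$ that depends only on $r$.

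First I would write, using additivity of energy over the disjoint decomposition $\Sigma_s = \Sigma_r \cup \Sigma_{r,s}$,
\begin{align*}
E(\Sigma_r, h_s) = E(\Sigma_s, h_s) - E(\Sigma_{r,s}, h_s).
\end{align*}
Next I would apply Proposition \ref{prop:crude_upper_bound} to bound the first term: $E(\Sigma_s, h_s) \leq E(\Omega_s, w) + K$, where $K = 2g\cdot E(H_+^1, w)$ is a finite constant depending only on $g$ and $k$. Substituting, this gives
\begin{align*}
E(\Sigma_r, h_s) \leq E(\Omega_s, w) - E(\Sigma_{r,s}, h_s) + K.
\end{align*}
Now I would split the Scherk energy over $\Omega_s = \Omega_r \cup \Omega_{r,s}$, writing $E(\Omega_s, w) = E(\Omega_r, w) + E(\Omega_{r,s}, w)$, so that
\begin{align*}
E(\Sigma_r, h_s) \leq E(\Omega_{r,s}, w) - E(\Sigma_{r,s}, h_s) + \bigl[E(\Omega_r, w) + K\bigr].
\end{align*}
Finally I would set $G(r) := E(\Omega_r, w) + K$. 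Since $w$ is a fixed smooth harmonic map on $\C$, the quantity $E(\Omega_r, w)$ is finite, non-negative, and continuous (indeed monotone increasing) in $r$, and $K$ is a non-negative constant; hence $G(r)$ is a non-negative continuous function of $r$, as required. This yields exactly the claimed inequality
\begin{align*}
E(\Sigma_r, h_s) \leq E(\Omega_{r,s}, w) - E(\Sigma_{r,s}, h_s) + G(r).
\end{align*}

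The only points requiring genuine care are bookkeeping rather than analytic: I must make sure that energy is genuinely additive across the splitting, which holds because $\Sigma_r$ and $\Sigma_{r,s}$ overlap only along the curve $\gamma_r$ (a set of measure zero), and that the identification $\Sigma_{r,s} \equiv \Omega_{r,s}$ is conformal so that the Scherk energy $E(\Omega_{r,s}, w)$ and the restricted energy $E(\Sigma_{r,s}, h_s)$ are computed over genuinely comparable domains. I do not expect any serious obstacle here; the content of the proposition is really just a rearrangement that isolates the annular energy difference $E(\Omega_{r,s}, w) - E(\Sigma_{r,s}, h_s)$, which is precisely the quantity the subsequent sections (via the parachute maps and the Energy Estimate) will be devoted to controlling.
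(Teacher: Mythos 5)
Your proposal is correct and follows essentially the same route as the paper: decompose the energy over $\Sigma_s = \Sigma_r \cup \Sigma_{r,s}$, apply Proposition \ref{prop:crude_upper_bound}, split the Scherk energy over $\Omega_s = \Omega_r \cup \Omega_{r,s}$, and take $G(r) := E(\Omega_r, w) + K$. The paper's proof is the identical algebraic rearrangement, with the same choice of $G(r)$.
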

\begin{proof}
The proof is entirely algebraic. Observing that $\Sigma _s = \Sigma _r \cup \Sigma_{r,s}$, we deduce:
\begin{align*}
E(\Sigma _s,h_s) &=  E(\Sigma _r,h_s) + E(\Sigma _{r,s},h_s) & \mbox{ since $\Sigma _s = \Sigma _r \cup \Sigma_{r,s}$}\\
  \Rightarrow E(\Sigma _r,h_s) &=   E(\Sigma _s,h_s) - E(\Sigma _{r,s},h_s)& \mbox{ by rearranging terms}\\
   &\leq  \left[ E(\Delta _s,w) + K \right] - E(\Sigma _{r,s},h_s) &\mbox{ by Proposition \ref{prop:crude_upper_bound}}\\
   &\leq  \left[ E(\Delta _r,w) + E(\Omega_{r,s},w) + K \right] - E(\Sigma _{r,s},h_s) & \mbox{ since $\Delta _s = \Delta _r \cup \Omega_{r,s}$} \\
  \Rightarrow E(\Sigma _r,h_s) &\leq  E(\Omega_{r,s},w) - E(\Sigma _{r,s},h_s) + G(r)&
\end{align*}
Taking $G(r):=E(\Omega_r, w) + K$, where $K$ was defined in Proposition \ref{prop:crude_upper_bound}, we have our desired result.
\end{proof}
This inequality estimates the growth of $E(\Omega _r,h_s)$ for all $s>r$. Observe that the right hand side compares the energies of the Scherk map $w$ and the Dirichlet solution $h_s$, both restriction to the common domain $\Omega _{r,s}$. Note that $\Omega _{r,s} \equiv \Sigma_s \setminus \Sigma _r = \Sigma_{r,s},$ so we will begin to use these symbols interchangeably. In the following, we work toward bounding the difference $$E(\Omega_{r,s},w) - E(\Omega _{r,s},h_s) .$$

\subsection{A comparison map for $h_s$ away from the handles}\label{section:construction-comparison_maps}

We will now construct a comparison map whose energy will bound $E(\Sigma _{r,s},h_s)$ from below. This is desirable because such a lower bound in turn bounds $E(\Omega_{r,s},w) - E(\Omega _{r,s},h_s)$ from above.

Loosely speaking, we will show that $\left.h_s\right|_{\Omega _{r,s}}$ has almost as much energy as $w|_{\Omega_{r,s}}$ (or, said differently: we are trying to show that $h_s$ does not concentrate most of its energy in $\Sigma _r$ around the handles, and that it spreads out its energy as well as $w$ does over the annulus $\Sigma _{s} \setminus \Sigma _r \equiv \Omega _{r,s}$).

\subsubsection{Parachute maps: from compact annuli $\Omega_{r,s}$ to $\HH$}
Observe that $h_s | _{\Omega_{r,s}}$ is a candidate to the partially-free Dirichlet boundary value harmonic mapping problem
\begin{equation}\label{eqn:PFDrs}
\begin{split}
u_s :\Omega _{r,s} \rightarrow \HH \\
u_s |_{\gamma _s} = w|_{\gamma _s} \\
u_s |_{\gamma _r} \mbox{ free} \\
\end{split}\tag{PFDrs}
\end{equation}
It is well known (see, for example, \cite{Hamilton75}) that critical points $u_s$ of the energy functional (which are critical with respect to variations of the map fixing the Dirichlet boundary $\gamma_s$, without restriction on the free boundary $\gamma_r$) satisfy $$(\nabla u_s)|_{\gamma_r}(\nu)=0,$$ where $\nu$ is the outward pointing normal of $\Omega _{r,s}$ along $\gamma _r$. A priori, it is not evident that $h_s$ satisfies this condition on $\gamma _r$.

In any case, we call the unique solution of our toy problem \eqref{eqn:PFDrs} a \emph{parachute map} $u_s$. Note that it is an energy minimizer among all candidates to the harmonic mapping problem \eqref{eqn:PFDrs}, so that the energy of $h_s | _{\Omega_{r,s}}$ is no greater than that of $u_s$.

Here, we note that, but uniqueness of the solution to \eqref{eqn:PFDrs}, the parachute maps respects the same $D_k$ dihedral symmetries that the boundary curve $\Gamma _s$ respects. In particular, we have the identities
\begin{align*}
\xi_{-j} \cdot u_s(z) & u_s( \xi_{j} z)\\
\overline{u_s(z) } & = u_s(\overline{z})\\
\end{align*}
for all $z\in \Omega_{r,s}$ and for all $\xi_j = e^{2\pi i \frac{j}{k}}$, for $j = 0, 1, \ldots , k-1$.

\subsubsection{Comparison models estimate $E_{\Omega _{r,s}}(h_s)$}

The crux of the argument is a doubling lemma which allows us to recast the partially-free/Dirichlet boundary value problem as a Dirichlet/Dirichlet boundary value problem on a doubled domain and appropriately reproduced boundary conditions (where the boundary conditions correspond to the inner/outer boundary components).

\begin{lemma}\label{lem:doublinglemma}
\emph{(Doubling Lemma)} Any critical point of (\ref{eqn:PFDrs}) can be reflected to define a critical point of a Dirichlet harmonic mapping problem (defined below) that only has one critical point (which also minimizes energy).
\end{lemma}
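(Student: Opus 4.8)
The plan is to exploit the fact that the free (natural) boundary condition on $\gamma_r$ is precisely a Neumann condition, so that reflecting across $\gamma_r$ produces a genuinely harmonic map on a doubled annulus. Concretely, the free boundary condition recorded above for a critical point $u_s$ of \eqref{eqn:PFDrs} reads $(\nabla u_s)|_{\gamma_r}(\nu)=0$, where $\nu$ is the outward normal along $\gamma_r$. I would reflect the domain annulus $\Omega_{r,s}=\{r\le |z|\le s\}$ across the circle $\gamma_r$ by the inversion $\iota(z)=r^2/\overline{z}$, whose fixed-point set is exactly $\gamma_r$. Since $\iota(\Omega_{r,s})=\{r^2/s\le |z|\le r\}$ and $r^2/s<r$, gluing the two copies along $\gamma_r$ yields the larger doubled annulus $\widetilde{\Omega}:=\{r^2/s\le|z|\le s\}$.

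On $\widetilde{\Omega}$ I would define the doubled map $\widetilde{u}_s$ to equal $u_s$ on $\Omega_{r,s}$ and to equal $u_s\circ\iota$ on the reflected copy $\iota(\Omega_{r,s})$. Because $\iota$ is anti-conformal and harmonicity of a map from a surface depends only on the conformal structure of the domain (energy and the tension field are conformally, hence anti-conformally, invariant in real dimension two), $u_s\circ\iota$ is harmonic on $\iota(\Omega_{r,s})$. The two definitions agree continuously along $\gamma_r$ since $\iota$ fixes $\gamma_r$ pointwise. The point is then to verify that $\widetilde{u}_s$ is $C^1$ across $\gamma_r$: writing the domain near $\gamma_r$ in a boundary coordinate with $\gamma_r=\{t=0\}$ and $\Omega_{r,s}=\{t\ge 0\}$, reflection sends $t\mapsto -t$, so the one-sided normal derivatives of $\widetilde{u}_s$ from the two sides are $\partial_t u_s(0^+)$ and $-\partial_t u_s(0^+)$; these agree if and only if $\partial_t u_s|_{\gamma_r}=0$, which is exactly the free boundary condition. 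A continuous map which is harmonic off the smooth seam $\gamma_r$ and $C^1$ across it is harmonic across it (the $C^1$ matching makes it a weak solution of the elliptic tension-field system along the seam, and elliptic regularity bootstraps to smoothness), so $\widetilde{u}_s$ is harmonic on all of $\widetilde{\Omega}$.

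By construction $\widetilde{u}_s$ takes the prescribed value $w|_{\gamma_s}$ on the outer boundary $\{|z|=s\}$ and the reflected value $(w|_{\gamma_s})\circ\iota$ on the inner boundary $\{|z|=r^2/s\}$; thus it solves a pure Dirichlet problem on $\widetilde{\Omega}$, which is the Dirichlet mapping problem asserted in the statement. Since the target $\HH$ is simply connected with strictly negative curvature, this Dirichlet problem has exactly one critical point: given two harmonic solutions with the same boundary data, the pointwise distance between them is subharmonic and vanishes on $\partial\widetilde{\Omega}$, hence is $\le 0$ by the maximum principle and therefore identically zero (cf. \cite{Hartman67}). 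Consequently that unique critical point is also the energy minimizer. Finally, because the Dirichlet data is invariant under $\iota$, uniqueness forces the minimizer to be $\iota$-symmetric, and an $\iota$-symmetric harmonic map automatically satisfies the Neumann condition on the symmetry circle $\gamma_r$; restricting it to $\Omega_{r,s}$ recovers a solution of \eqref{eqn:PFDrs}, giving the two-way correspondence.

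The step I expect to be the genuine obstacle is the regularity of $u_s$ up to the free boundary $\gamma_r$: the reflection argument requires that $u_s$ be $C^1$ (indeed smooth) up to $\gamma_r$ and that the natural boundary condition hold in the classical pointwise sense rather than merely weakly. For a two-dimensional domain mapping into a smooth non-positively curved target this follows from the boundary regularity theory for energy-minimizing harmonic maps with a smooth free boundary (the free boundary here being a round circle), which upgrades the weak Euler--Lagrange/natural boundary identity of \cite{Hamilton75} to a classical Neumann condition. Once this regularity is secured, the remainder is the elementary reflection bookkeeping carried out above.
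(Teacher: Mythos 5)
Your proposal is correct and takes essentially the same approach as the paper's proof: reflect the critical point of \eqref{eqn:PFDrs} across the free-boundary circle by inversion (Schwarz reflection), recognize the doubled map as a solution of the associated Dirichlet--Dirichlet problem on the doubled annulus, and invoke uniqueness of that Dirichlet problem to conclude that the critical point is unique and energy-minimizing. The differences are purely expository: you work directly at radius $r$ rather than rescaling the free boundary to the unit circle, and you supply details the paper leaves implicit, namely the $C^1$-gluing across the seam, the maximum-principle proof of uniqueness for the Dirichlet problem, and the symmetry argument recovering the Neumann condition.
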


Hence, producing a critical point of (\ref{eqn:PFDrs}), actually produces an energy minimizer among candidates to (\ref{eqn:PFDrs}). We will apply this lemma after its proof.

\begin{proof}
This is proven by a series of observations.

(0) The domain $\Omega _{r,s}$ is conformally equivalent to $\Omega_{1,\frac{s}{r}}$ under the scale $\frac{1}{r}$. So, the rescaled mapping problem on the domain $\Omega_{1,\frac{s}{r}}$ is equivalent to (\ref{eqn:PFDrs}) by parameterizing the boundary appropriately under the rescaling, i.e., $f|_{\gamma _{s/r}}(r\cdot z) = w|_{\gamma _s}(z)$.

(1) The critical points $f: \Omega_{1,\frac{s}{r}} \rightarrow \HH$ of the equivalent (\ref{eqn:PFDrs}) mapping problem
\begin{align*}
f : \Omega_{1,\frac{s}{r}} & \rightarrow \HH \\
f|_{\gamma _{s/r}} &= w|_{r\cdot \gamma _{s/r}} \\
f|_{\gamma _1} & \mbox{ free}
\end{align*}
are harmonic and satisfy $(\nabla f)|_{\gamma_1}(\nu)=0$, where $\nu$ denotes the outward normal along $\gamma _1$. The curve $\gamma_1$ appears as the free boundary of $\Omega_{1,\frac{s}{r}}$ and so, by the regularity of $f$ on $\gamma _1$, $f$ can be reflected across $\gamma_1$ onto $\Omega_{\frac{r}{s},1}$ to produce a map $$\tilde{f} : \Omega_{\frac{r}{s},\frac{s}{r}}\rightarrow \HH$$ using a Schwarz Reflection Principle.

(2) The map $\tilde{f}$ can be considered as a candidate for the Dirichlet harmonic mapping problem \begin{equation*}\label{eqn:Drs}
\begin{split}
&\tilde{f} :  \Omega_{\frac{r}{s},\frac{s}{r}} \rightarrow \HH \\
&\tilde{f}|_{\gamma_{r/s}} (z) = w|_{r\cdot \gamma _{s/r}} \left(\frac{1}{\zbar}\right)\\
&\tilde{f}|_{\gamma _{s/r}} (z) = w|_{r\cdot \gamma _{s/r}} (z).\\
\end{split}\tag{Drs}
\end{equation*}
Observe that the orientation is reversed on the boundary $\gamma_{r/s}$ to preserve the dihedral symmetry (in the target) of the mapping problem. It is automatically satisfied by definition of $\tilde{f}$ by reflection of $f$ across $\gamma _1$.

(3) Observe that (\ref{eqn:Drs}) has a unique solution $\tilde{F}$ which, in particular, minimizes energy among all maps satisfying the Dirichlet boundary condition. Furthermore, it is the unique critical point. So, it must be that $\tilde{f}=\tilde{F}$.

(4) Thus, there is a unique critical point $F$ of the (\ref{eqn:PFDrs}) mapping problem. This critical point $F$ minimizes energy, and can be realized as a rescaling by $r$ of the restriction $\tilde{F}|_{\Omega_{1,\frac{s}{r}}}$ since the domains are conformal, i.e., $$\Omega _{r,s} = r\cdot \Omega_{1,\frac{s}{r}}.$$ In other words, the solution to (\ref{eqn:PFDrs}) is given by
\begin{equation*}
u_s(z)=\tilde{F}|_{\Omega_{1,\frac{s}{r}}} (r\cdot z).
\end{equation*}
since $(\nabla u_s)|_{\gamma_1}(\nu)=0$. This completes the proof of the Doubling Lemma.
\end{proof}

Let us return to studying the energies of the restrictions $\left. h_s \right|_{\Sigma _r}$. Denote by $u_s$ the solution to (\ref{eqn:Drs}). The Doubling Lemma \ref{lem:doublinglemma} implies
\begin{equation*}
E_{\Omega_{r,s}}(u_s) \leq E_{\Omega_{r,s}}(h_s).
\end{equation*}
Recall from Proposition \ref{prop:compacta-energy-scherk-bound} that
\begin{eqnarray*}
E(\Sigma _r,h_s) &\leq E(\Omega_{r,s},w) - E(\Omega _{r,s},h_s) + G(r).
\end{eqnarray*}
Combining these, we finally arrive at
\begin{lemma}\label{lemma:relaxed-sufficient-condition}\emph{(Relaxed Sufficient Condition)}
For each $s >r$, and the for the maps $h_s$, parachute map $u^s$, and Scherk map $w$ defined above, we have
\begin{eqnarray*}
E(\Sigma _r,h_s) &\leq E(\Omega_{r,s},w) - E(\Omega _{r,s},u_s) + G(r),
\end{eqnarray*}
where $G(r)$ is the function defined in \ref{prop:compacta-energy-scherk-bound}.
\end{lemma}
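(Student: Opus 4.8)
The plan is to derive this inequality purely algebraically by splicing together the two facts already assembled: the energy bound of Proposition \ref{prop:compacta-energy-scherk-bound} and the energy-minimizing property of the parachute map $u_s$ furnished by the Doubling Lemma \ref{lem:doublinglemma}. Recall from Proposition \ref{prop:compacta-energy-scherk-bound} that for all $s>r$ we have
$$E(\Sigma _r,h_s) \leq E(\Omega_{r,s},w) - E(\Omega _{r,s},h_s) + G(r),$$
so it remains only to replace the negative term $-E(\Omega_{r,s},h_s)$ by the smaller (more negative) quantity $-E(\Omega_{r,s},u_s)$; this amounts to showing $E(\Omega_{r,s},u_s) \leq E(\Omega_{r,s},h_s)$.

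The first step is to verify that $h_s|_{\Omega_{r,s}}$ is an admissible competitor for the partially-free problem \eqref{eqn:PFDrs}. This is immediate: by construction $h_s$ solves the Dirichlet problem \eqref{eqn:Ds}, so $h_s|_{\gamma_s} = w|_{\gamma_s}$, which is exactly the Dirichlet condition required on $\gamma_s$ in \eqref{eqn:PFDrs}; and since no condition is imposed on $\gamma_r$, the free-boundary requirement is vacuously met. Thus $h_s|_{\Omega_{r,s}}$ lies in the class of candidate maps for \eqref{eqn:PFDrs}.

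The second step invokes the Doubling Lemma \ref{lem:doublinglemma}, which establishes that the parachute map $u_s$ is not merely a critical point but in fact the unique energy minimizer among all candidates to \eqref{eqn:PFDrs}. Since $h_s|_{\Omega_{r,s}}$ is one such candidate, minimality gives
$$E(\Omega_{r,s},u_s) \leq E(\Omega_{r,s},h_s).$$
Negating this and substituting into the bound from Proposition \ref{prop:compacta-energy-scherk-bound} yields the claim. There is no genuine obstacle in the present lemma itself: the substance of the argument has already been absorbed into the Doubling Lemma (which does the analytic work of reflecting the partially-free problem to a Dirichlet problem on a doubled annulus in order to secure minimality, rather than merely criticality) and into Proposition \ref{prop:compacta-energy-scherk-bound}. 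The only point requiring a moment's care is the identification $\Omega_{r,s} \equiv \Sigma_{r,s}$, under which the term $E(\Sigma_{r,s},h_s)$ appearing in Proposition \ref{prop:compacta-energy-scherk-bound} is read as $E(\Omega_{r,s},h_s)$.
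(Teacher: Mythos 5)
Your proposal is correct and follows the paper's own argument exactly: the paper likewise combines Proposition \ref{prop:compacta-energy-scherk-bound} with the minimizing property $E(\Omega_{r,s},u_s) \leq E(\Omega_{r,s},h_s)$ supplied by the Doubling Lemma \ref{lem:doublinglemma}, using that $h_s|_{\Omega_{r,s}}$ is a candidate for \eqref{eqn:PFDrs}. Your write-up is in fact slightly more careful than the paper's, which states the candidacy of $h_s|_{\Omega_{r,s}}$ earlier and (in one sentence) even reverses the direction of the inequality in a way your argument correctly avoids.
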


This inequality allows us to finally control the energies of the restrictions solely in terms of a comparison between the energies of the Scherk map $w$ and the parachute maps $u_s$. We will now use the Energy Estimate to bound $E(\Omega_{r,s},w) - E(\Omega_{r,s},u_s)$:

\newtheorem*{lem:energyestimate}{Lemma \ref{lem:energyestimate}}
\begin{lem:energyestimate}
\emph{(The Energy Estimate)}
There is a continuous increasing function $F(r)$ such that, when $r$ is large enough, for any $s>r$, the following inequality holds:
$$E(\Omega_{r,s},w) - E(\Omega_{r,s},u_s) < F(r)$$
\end{lem:energyestimate}

We defer its proof to \S\ref{section:toyproblem-the_energy_estimate}.

\subsection{Putting it all together}\label{section:construction-proof}
With this in hand, we are finally able to diagonalize $\{ h_s\}_{s\geq r}$ on the domains $\Omega _r$ to obtain a convergent subsequence.
\begin{proof}\label{proof:crusher_existence}
Fix $r >1$ large enough to satisfy the hypotheses of the Energy Estimate (Lemma \ref{lem:energyestimate}). For each $s>r$, the energy of the map $\{ \left.h_s\right|_{\Sigma _r}\}_{s\geq r}$ can be bounded by
\begin{align*}
E(\Sigma _r,h_s) &\leq E(\Omega_{r,s},w) - E(\Omega _{r,s},u_s) + G(r) &\mbox{ by Proposition \ref{lemma:relaxed-sufficient-condition}}\\
&\leq F(r) + G(r) & \mbox{ by the Energy Estimate (Lemma \ref{lem:energyestimate})}
\end{align*}
where $F(r)$ is defined in the Energy Estimate (Lemma \ref{lem:energyestimate}) and $G(r)$ is defined in Proposition \ref{lemma:relaxed-sufficient-condition}. Thus, by Proposition \ref{prop:precompactness}, the family $\{ \left.h_s\right|_{\Sigma _r}\}_{s\geq r}$ is precompact.
\end{proof}

\subsubsection{Non-collapsing of the limit $h_s \rightarrow h$}
With the uniform energy upper bound $E(\Omega_r, h_s)<M(r)$, we can take a limit of the harmonic maps using a diagonalization process. However, we face the possibility of $h_s$ collapsing a subset of the domain $\Omega$. The classification of local singularities for maps between surfaces by Wood is collected in his paper \cite{Wood77paper}, although the derivations are more thorough in his thesis \cite{Wood74thesis}.

We have to rule out the degenerate possibility that the limiting map has an image which does not contain any open disk, i.e., that $h$ is a piecewise constant map (with a $0-$dimensional image) or maps to a union of geodesic segments. It suffices to exhibit an energy growth rate of $E(\Sigma_s, h_s)$ bound from below.

Recall that the comparison models $u_s$ satisfy:
\begin{align*}
E(\Omega_{r,s},u_s) &\leq E(\Omega_{r,s},h_s) & \mbox{ by Lemma \ref{lem:doublinglemma}}\\
E(\Omega_{r,s},w) - E(\Omega_{r,s},u_s) &\leq F(r) & \mbox{ by Lemma \ref{lem:energyestimate}}
\end{align*}
Combining these, we see that there is a lower bound on $E(\Omega_{r,s},h_s)$:
\begin{align*}
E(\Omega_{r,s},w) - F(r) & \leq E(\Omega_{r,s},h_s)
\end{align*}
since $E(\Omega_{r,s},w)$ is of order $k$ polynomial growth in $s$. This, in turn, bounds $E(\Sigma_s,h_s)$ from below since $E(\Omega_{r,s},h_s) \leq E(\Sigma _s ,h_s)$. Finally, since the maps $h_s$ converge uniformly on compact subsets to $h$, the energy densities $E(\Sigma _s ,h_s)$ converge to $E(\Omega _{r,s},h)$, and we deduce that $E(\Sigma _s ,h_s)$ is bounded from below as a function of $s$.

\subsubsection{Hopf differential $\Phi^h$ and energy $e^h$ of $h$}
Now, we turn to studying the limiting Hopf differential $\Phi ^h$. Recall that
\begin{align*}
\mathcal{H}^h&:= ||\partial h||_{\sigma,\rho}^2= |h_z|^2 \frac{\rho (h(z))}{\sigma(z)} \\
\mathcal{L}^h&:= ||\overline{\partial} h||_{\sigma,\rho}^2 = |h_{\overline{z}}|^2 \frac{\rho (h(z))}{\sigma(z)}
\end{align*}
respectively denote the holomorphic and anti-holomorphic energies of $h$. Observe that we have a pointwise inequality which helps us bound the norm of $||\Phi ^h||$:
\begin{align*}
||\Phi ^h||^2 &= \mathcal{H}^h \mathcal{L}^h \\
&\leq ( \mathcal{H}^h + \mathcal{L}^h ) ^2 \\
& \leq (e^h)^2
\end{align*}

So, we have at most polynomial growth of $$ \int _{\Sigma_r} ||\Phi ^h || dvol $$ of order $k$ as a function of $r$. With the lower bound on energy growth rate from the non-collapsing discussion above, we see that in fact the polynomial growth rate is $k$ (otherwise, the map must limit to strictly fewer than $k$ ideal geodesics). This implies that $\Phi ^h$ has a pole of order $k+2$.

Let us apply the Riemann-Roch theorem to deduce the number of zeros of $\Phi ^h$. The degree of the square of the canonical line bundle is equal to the number of zeros minus the number of poles of a non-vanishing meromorphic quadratic differential. Thus,
$$ \# \{zeros\} - \# \{ poles\} = 4g -4$$
where $g$ is the genus of the compact surface. Hence, there are $(4g -4) + (k+2)$ zeros of $\Phi _h$ on $\Sigma$. This proves the final assertion of Theorem \ref{thm:crusher_existence}, describing the Hopf differential $\Phi ^h$ of $h: \Sigma _g \rightarrow \mathcal{P}_k$.

\section{Uniqueness: Proof of Theorem \ref{thm:crusher_uniqueness}}\label{section:uniqueness}
Let us now turn to discussing the existence of the maps established in Theorem \ref{thm:crusher_existence}. We prove:

\newtheorem*{thm:crusher_uniqueness}{Theorem \ref{thm:crusher_uniqueness}}
\begin{thm:crusher_uniqueness}
Suppose $h,v: \Sigma _g \rightarrow \HH$ are two harmonic maps. Denote their pointwise distance function by $d(z) := dist_{\HH} \left(h(z),v(z)\right)$. If we have $$\left(cosh\circ d\right) -1 \in \mathcal{L} ^p (\Sigma _g)$$ for some some $p\in (1, +\infty]$, then $d(z) \equiv 0$, i.e., the maps $v$ and $h$ must agree pointwise.
\end{thm:crusher_uniqueness}

\begin{proof} Suppose we have two harmonic maps $h,v: \Sigma \rightarrow \HH$ and denote their pointwise distance function by $d(z):=dist_{\HH}(h(z),v(z))$. Our goal is to show that $d \equiv 0.$ Observe that it is equivalent to show that the function $$Q(z):=cosh(d(z))-1$$ vanishes identically on $\Sigma$. Since the target manifold $\HH$ is non-positively curved  and simply connected, $Q$ is smooth as a function of $z$. We will now recall a computation from \cite{HW97} to obtain a sign on the Laplacian of $Q$, using the Riemannian chain rule.

Recall that the Riemannian chain rule states, for a map
$$ \Delta _M (f \circ g) (z)= tr _{M} \left[\left.Hess (f) \right|_{g(z)} \circ \left(\left.Dg\right|_{z} , \left.Dg\right|_{z}\right) \right]+ \left.\nabla _N f\right|_{g(z)} \left(\left.\tau ^g\right|_{z}\right)$$

Let us choose a convenient basis for the computation. Given two points $p$ and $q$ in $\HH$, there is a unique geodesic segment $\gamma_{pq}$ between them. There are natural frames associated to the tangent spaces $T_p \HH$ and $T_q \HH$ associated to $\gamma _{pq}$: $T_p \HH = span(Tan_p , N^p)$ and $T_q \HH = span(Tan_q, N^q)$, where $Tan_p$ and $Tan _q$ are unit tangent vectors to $\gamma _{pq}$ at the points $p$ and $q$, respectively, and where $N^p$ and $N^q$ are unit normal vectors orthogonal to $Tan _p$ and $Tan _q$, respectively.

Then, using the local orthonormal frames $\{ e_1, e_2\}$ for $T \Sigma$ and $\{(Tan _p, 0), (0, N^p), (Tan_q), (0, N^q)\}$ for $T_{(p,q)}\HH \times \HH$, we have:
$$Hess_{\HH \times \HH} cosh\circ d = (cosh \circ d) {\bf I} _{4\times 4} + {\bf A} $$
where ${\bf A}$ is a $4\times 4$ matrix whose only non-zero entries are $-1$ at $((N^p,0),(0,N^p))$ and $((N^q,0),(0,N^q))$,  and a $- cosh \circ d$ at $((Tan _p,0),(0,Tan_q))$ and $((Tan _q,0),(0,Tan_p))$.

Thus, the Laplacian of $Q$ can be bounded:
\begin{align*}
\Delta Q(z) \geq & (cosh \circ d(z)) \left[ \left(\langle \nabla h, Tan_{h(z)}\rangle - \langle \nabla v, Tan_{v(z)} \rangle\right) ^2 \right]\\
& + (cosh \circ d(z)-1)\left[ \langle \nabla h,N^{h(z)} \rangle ^2 + \langle \nabla v, N^{v(z)}\rangle ^2\right]\\
& + \langle grad Q , (\tau ^h , \tau ^v) \rangle _{(h,v)(z)}
\end{align*}
where $\langle \nabla u , Tan ^{u(z)} \rangle $ denotes the projection of $du(e_1) + d(e_2)$ onto $Tan^{u(z)}$.

Since $h$ and $v$ are harmonic, the vector field $(\tau ^h , \tau ^v)$ vanishes. This implies that the function $Q(z)$ is subharmonic on $\Sigma$. Now, notice that $\Sigma _g$ is non-compact and yet we have
\begin{align*}
\Delta Q (z)& \geq 0 \\
Q(z) &\geq 0\\
Q(z)& \in \mathcal{L}^p(\Sigma )
\end{align*}
Hence, we investigate the conditions under which $d$ is forced to be a constant.

It is a classical fact from complex analysis that all bounded, non-negative sub-harmonic functions on a parabolic domain must be constant. In particular, $\Sigma _g$ is a parabolic domain because its conformal model for the puncture can be described by $\C \setminus \{0\}$. So, if $p=+\infty$, then $Q$ must be constant. Since $Q$ is also in $\mathcal{L}^p$, it must be that $Q$ vanishes identically.

For $p\in (1, +\infty )$, it was shown in \cite{Yau75} that any non-negative sub-harmonic function on a complete Riemannian manifold must be constant. It similarly follows that, in this case, $Q$ vanishes identically.

\end{proof}

\section{Example: punctured square torus squashed onto ideal square}\label{section:square-example}
In the following, we will consider the special case of the square torus in which we can determine the location of the zeros of the Hopf differential, proving Corollary \ref{cor:torus_crusher}. After this section, the remainder of the paper is devoted to the details of the energy estimate Lemma \ref{lem:energyestimate}.

\begin{proof}[Proof of Corollary \ref{cor:torus_crusher}]
Let $g=1$ and $k=4$ in the hypotheses of Theorem \ref{thm:crusher_existence}, and let $h:\Sigma_1 \rightarrow \mathcal{P}_4$ denote the harmonic handle crushing map of the square punctured torus to the ideal square. By the $D_4$ dihedral symmetry of $\Sigma _1$, each of the maps $h_s$ in the construction of $h$ is symmetric with respect to the same $D_4$ symmetries. So, the limiting map $h$ also respects the $D_4$ symmetry group. This is depicted in Figure \ref{fig:square-symmetries}; note the domain identification space $\Sigma _1$.

\begin{figure}[h!]
\begin{center}
\includegraphics[height=1.4in]{./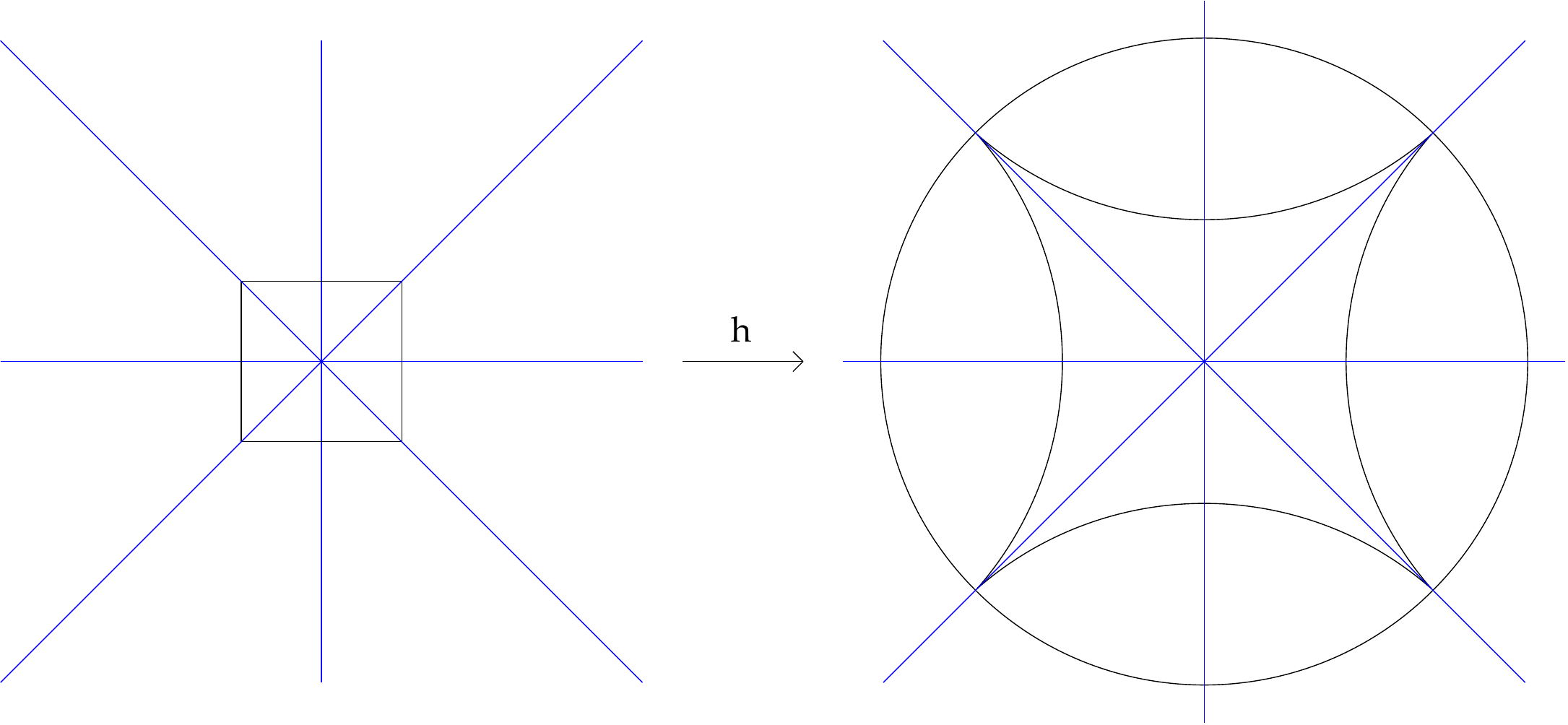}
\caption{\label{fig:square-symmetries}Lines of symmetries for $h:\Sigma _1 \rightarrow \mathcal{P}_4$ from Corollary \ref{cor:torus_crusher}.}
\end{center}
\end{figure}

By the Riemann-Roch theorem, we establish that there are $6$ zeros, counting multiplicity, of $\Phi ^h$ on $\Sigma _1$. Observe that any point $p$ not on a line of symmetry of $\Sigma _1$ cannot be a zero of $\Phi ^h$ because the $D_4$ symmetry of the map would yield $7$ other zeros of $\Phi ^h$ (the orbit of $p$ under the $8$ symmetries of $D_4$ would yield $7$ other zeros). Hence, the zeros of $\Phi ^h$ must lie on the lines of symmetry.

\begin{figure}[h!]
\begin{center}
\includegraphics[height=1.4in]{./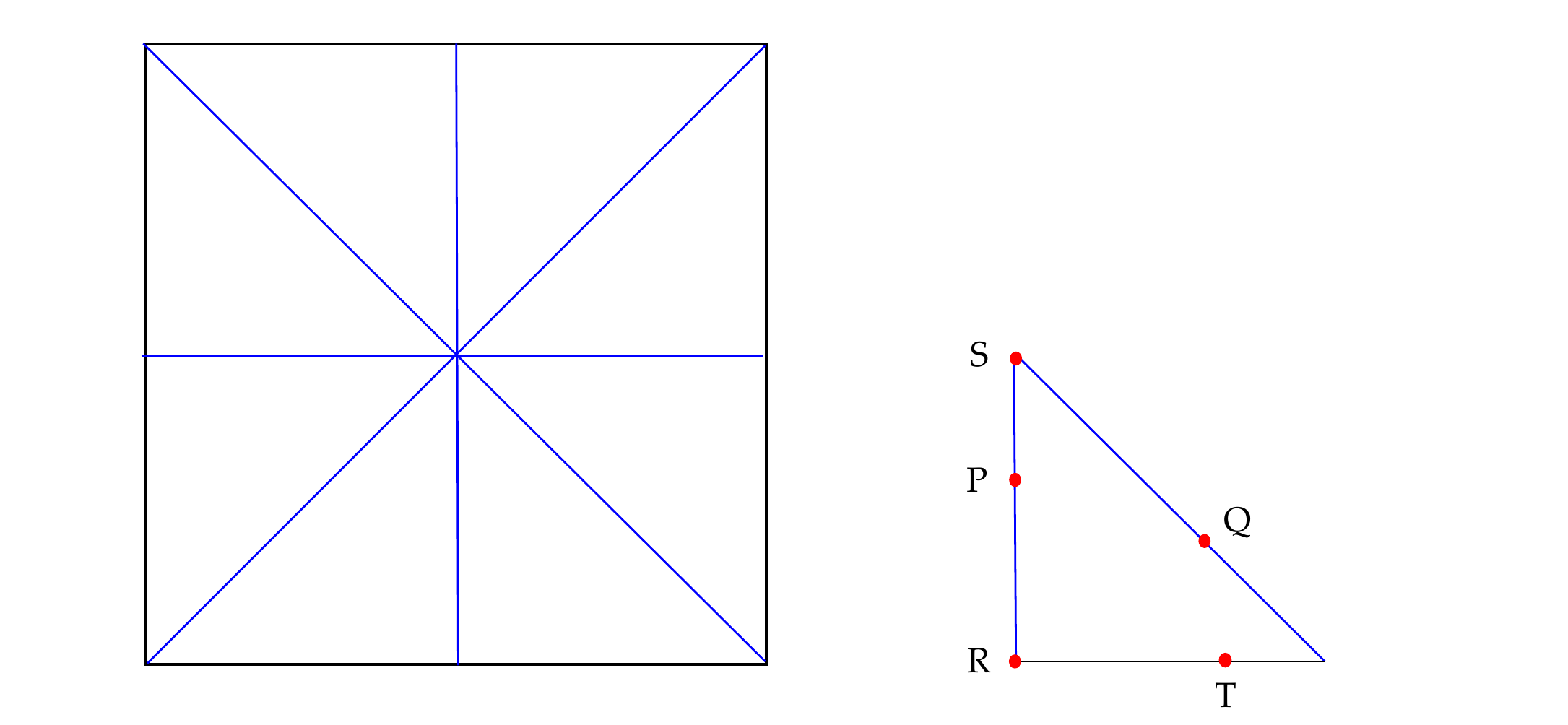}
\caption{\label{fig:square-hopf-qd-analysis}Possible locations for zeros of $\Phi ^h$ of $h:\Sigma _1 \rightarrow \mathcal{P}_4$ handle crusher.}
\end{center}
\end{figure}

Thus, there are five possible locations for a zero of $\Phi ^h$. We draw the possibilities in Figure \ref{fig:square-hopf-qd-analysis}, in a triangular fundamental domain of a conformal model of $\Sigma _1$ with respect to the $D_4$ symmetries. We label the possible locations $P$, $Q$, $R$, $S$, and $T$. Note that each of the edges of the triangular fundamental domain lies on a line of symmetry of $\Sigma _1$.

If a zero $P$ of order $n_P$ lies on a vertical line of symmetry, but not on the diagonal line of symmetry, and in the interior of the vertical line of symmetry (depicted on the vertical edge of the triangular fundamental domain), there would be $3$ other zeros of order $n_P$ by the diagonal symmetries of $D_4$.

If a zero $Q$ of order $n_Q$ lies on a diagonal line of symmetry, but not on the vertical and horizontal lines of symmetry, then there would be $3$ other zeros of order $n_Q$ by the horizontal and vertical symmetries of $D_4$.

If a zero $R$ of order $n_R$ lies on the vertical or horizontal line of symmetry, but not on the diagonal line of symmetry, and on the endpoint of the vertical or horizontal line of symmetry (depicted at the right angle of the triangular fundamental domain), there would be $1$ other zero of order $n_R$ by the diagonal symmetries of $D_4$.

If a zero $S$ is at the fixed point of the $D_4$ symmetries (depicted at the top vertex of the triangular fundamental domain). Such a zero of order $n_S$ does not necessitate other zeros of $\Phi ^h$ by the $D_4$ symmetry, for example, if $n_S = 6$. However, the foliation must respect the $D_4$ symmetry, so $n_S -2$ must be a multiple of $4$.

Finally, if a zero $T$ of order $n_T$ lies on the horizontal line of symmetry, but not on the diagonal line of symmetry, and in the interior of the horizontal line of symmetry (depicted on the horizontal edge of the triangular fundamental domain), there would be $3$ other zeros of order $n_T$ by the diagonal symmetries of $D_4$.

Hence, we must have $4n_P + 4n_Q + 2n_R + n_S +4n_T = 6$. Note that $n_P$, $n_Q$, $n_R$, $n_S$, and $n_T$ must be non-negative integers. Observe that the order $n_S$ must be such that $n_S -2$ is divisble by $4$, since the foliation of $\Phi ^h$ near $S$ must be fixed by the $D _4$ symmetries. Hence, $n_S$ is either $2$ or $6$. So, $n_P$, $n_Q$, and $n_T$ are at most $1$. Similarly, $n_R$ is at most $2$. By a case by case analysis, the only possibilities are given by the following five arrangements:

\begin{enumerate}
\item[(a)] three zeros of order $2$, with $(n_P,n_Q,n_R,n_S, n_T)=(0,0,2,2,0)$
\item[(b1)] or one zero of order $2$ and four of order $1$, with $(n_P,n_Q,n_R,n_S,n_T)=(0,0,2,0,1)$,
\item[(b2)] or one zero of order $2$ and four of order $1$, with $(n_P,n_Q,n_R,n_S,n_T)=(1,0,2,0,0)$,
\item[(b3)] or one zero of order $2$ and four of order $1$, with $(n_P,n_Q,n_R,n_S,n_T)=(0,1,2,0,0)$,
\item[(c)] or one zero of order $6$, with $(n_P,n_Q,n_R,n_S)=(0,0,0,6,0)$.
\end{enumerate}
This establishes the three possible configurations of multiplicities of zeros.
\vskip 11pt

\emph{If $\Phi ^h$ has divisor data in arrangement (a).} Let us study the orientation of the map $h$ at the zeros of $\Phi ^h$, supposing that $\Phi ^h$ has the divisor given by arrangement (a). In arrangement (a), the Hopf differential $\Phi^h$ has a zero of order two at each of the midpoints $0$ in $L$, $H_+$, and $H_-$. Let us analyze the orientation of the map at each of these zeros. Consider the curves
\begin{align*}
\eta _h &:= \{(s,0) |-1\leq s\leq 1\} \cup \{(-s,0)|-1\leq s\leq 1\} \subset H_+ \cup H_- \\
\eta _v &:= \{(0,t) |-1\leq t\leq 1\} \cup \{(0,-t)|-1\leq t\leq 1\} \subset H_- \cup L
\end{align*}
These curves generate the homology of $\Omega$. Furthermore, by the symmetries of the map, we have
\begin{align*}
h(\eta _h) \subset \{(x,0) \} \subset \HH \\
h(\eta _v) \subset \{(0,y) \} \subset \HH
\end{align*}
Since the diagonal isometry of $\Sigma _1$ taking $0\in L$ to $0 \in H_+$ and the diagonal isometry on $\HH$ are both orientation-reversing, and since pre- and post-composition of $h$ by these maps preserves $h$, the orientation of $h$ at both of these points are the same. By this, we mean $\mathcal{J} \geq0$ at these points. Now, consider the Jacobian along the curve $\eta _h$.

We argue that the sign at $0\in H_-$ is opposite to the sign at $0\in H_+$ (and $0\in L$). Consider mapping half of $\eta _h$, which connects $0\in H_+$ to $0\in H_-$ along the right side. It gets mapped into a curve along the positive real axis of $\HH$ which is a loop at $0\in \HH$. Tracing the derivative of this curve, we see that the tangent vector to the curve must change sign upon returning to $0\in L$. Hence, $\mathcal{J}^h \leq0$ here. Hence, the orientation of $h$ at two of its zeros (each of order two) are the same, while the orientation of $h$ at its other zero (of order two) is reversed.

\vskip 11pt

\emph{Only arrangement (a) is a square of a holomorphic 1-form.} It is clear that none of the configurations of the Hopf differential $\Phi ^h$ descibed by arrangement (b) can be the square of a holomorphic $1$-form $\eta$, since the simple zeros of $\Phi ^h$ cannot be realized by $\eta ^2$. If $\Phi ^h$ is in configuration (c), and if $\eta$ is a holomorphic $1$-form such that $\eta ^2 = \Phi ^h$, then $\eta$ has a zero of order $3$ at the fixed point of the $D_4$ symmetry group. By the Abel-Jacobi theorem, such a holomorphic $1$-form $\eta$ does not exist.
\end{proof}

\section{The toy problem and The Energy Estimate}\label{section:toyproblem}

This section is devoted to studying the solution of a toy problem. This solution, the parachute map, was introduced in \S\ref{section:construction-comparison_maps} to establish a priori bounds on the energy of the harmonic mapping of a compactum of the punctured Riemann surface. The energy bound hinges on the geometric quality that the free boundary of the parachute maps $u_s$ stays in a bounded set independent of $s$.

In \S\ref{section:toyproblem-parachute_no_folds} and \S\ref{section:toyproblem-parachute_qd_analysis}, we describe the geometry of the parachute map and also derive restrictions on the Laurent expansion of the Hopf differential. These will be applied in conjunction in \S\ref{section:toyproblem-bounded_core_lemma}, which is the heart of this section. We provide an outline of this section:
\begin{itemize}
\item[\S\ref{section:toyproblem-parachute_map_setup}:]Define the parachute maps.
  \begin{enumerate}
    \item Pose and solve the toy problem whose solutions are the parachute maps.
    \item Describe symmetries of parachute maps.
    \item Observe that the boundary condition at the core curve is free.
  \end{enumerate}
\item[\S\ref{section:toyproblem-parachute_no_folds}:]Describe the injectivity of the maps.
  \begin{enumerate}
    \item Use Theorem \ref{thm:wood_gauss-bonnet} (Wood's Gauss-Bonnet formula for the image of harmonic maps) to show that the parachute maps are local diffeomorphisms.
    \item Note that this lets us find disks of increasing radius in the domain along the sequence of parachute maps, from which the parachute maps are diffeomorphisms.
  \end{enumerate}
\item[\S\ref{section:toyproblem-parachute_qd_analysis}:]Analyze Hopf differentials of the parachute maps through their symmetries.
  \begin{enumerate}
    \item Express the Hopf differential as a Laurent expansion.
    \item Show that the coefficients are symmetric with respect to the $-2$ index.
    \item Observe that $\ell ^2 (\C)$-norm of the tails (indices outside $[-k-4,k]$) is finite on the core curve.
    \item Deduce that the index $-k-4$, $-2$, and $k$ coefficients determine boundedness of $|\phi ^s|$ on the core curve.
    \item Obtain a uniform bound of the Hopf differential on the core curve (the free boundary) by considering those three coefficients.
  \end{enumerate}
\item[\S\ref{section:toyproblem-bounded_core_lemma}:]Show that the core curve mapped by any parachute map stays in a bounded set.
  \begin{enumerate}
    \item Observe that the Hopf differential norm equals the energy density on the core curve.
    \item Show that the sequence of Hopf differentials have bounded norm on the core curve.
    \item[] \emph{This involves an argument by contradiction: we use a blow up argument on a sequence of disks of increasing radii in the Hopf differential norm, provided by \S\ref{section:toyproblem-parachute_no_folds}.}
    \item Use the uniform bound on Hopf differentials on the core curve to obtain the Bounded Core Lemma.
  \end{enumerate}
\item[\S\ref{section:toyproblem-the_energy_estimate}:]Use this pointwise bound on the free boundary to derive the Energy Estimate (Lemma \ref{lem:energyestimate}).
  \begin{enumerate}
    \item Use the Bounded Core Lemma to deduce that the pointwise distance between the parachute map and the Scherk map is uniformly bounded (compared on any common annular domain).
    \item Obtain a gradient norm estimate for the parachute map along the core curve by Cheng's interior gradient estimate, which relies on the uniform bound between the images of the parachute map and the Scherk map to choose appropriate constants.
    \item Observe that the parachute map energy density on the core curve reduces to simply the tangential energy of the core curve, since the core curve is a free boundary.
    \item Bound the energy of an auxilliary harmonic map from a disk which ``fills in" the core curve using Choe's Iso-energy inequality for harmonic maps (relating interior $2$-dimensional energy to $1$-dimensional energy of the boundary).
    \item Observe that the Scherk map on $\Omega _s$ has less energy than a ``filled in" harmonic extension of $\left.u_s\right|_{\Omega _{r,s}}$ into $\Omega _r$.
    \item Re-arrange the Scherk map energy bound to deduce the Energy Estimate (Lemma \ref{lem:energyestimate}).
  \end{enumerate}
\end{itemize}

\subsection{Parachute maps}\label{section:toyproblem-parachute_map_setup}
Recall the partially free boundary value harmonic mapping problem
\begin{equation}\label{eqn:PFDrs}
\begin{split}
u_s :\Omega _{r,s} \rightarrow \HH \\
u_s |_{\gamma _s} = w|_{\gamma _s} \\
u_s |_{\gamma _r} \mbox{ free} \\
\end{split}\tag{PFDrs}
\end{equation}
whose solutions $u_s$ we call parachute maps. These parachute maps were introduced in \S\ref{section:construction-comparison_maps} as comparison maps for $\left.h_s\right|_{\Omega_{r,s}}$, having the same image on the $\gamma_s$ boundary component. In this section, we analyze the discrepancy of energy between the parachute maps and the Scherk map, $E(u_s,\Omega_{r,s})-E(w,\Omega_{r,s})$, as a function of $s$, for all $s$ greater than any given fixed $r>1$. Without loss of generality, we rescale $\Omega_{r,s}$ by a factor of $\frac{1}{r}$ to avoid the factor of $\frac{s}{r}$ in the following analysis.

The parachute maps $u_s$ are closely tied to the Scherk map $w$ because the Dirichlet boundary condition on $\gamma _s$ for $u_s$ is obtained from $w|_{\gamma _s}$. Note that the energy of $u_s$ is less than that of $w$ on their common domain of definition, since $u_s$ solves the partially free boundary value problem. We are interested in the magnitude of this difference.

One can imagine the sequence of restrictions $w|_{\Omega_{1,s}}$ of the Scherk map converging (identically) to the Scherk map as $s\nearrow \infty$. On the other hand, the sequence of parachute maps $u_s$ (defined on $\Omega_{1,s}$) agree with $w|_{\Omega_{1,s}}$ on the $\gamma _s$ boundary component, but disagree on the $\gamma _1$ boundary component. For this reason, we chose the moniker \emph{parachute}, since we are reminded of the children's toy parachute getting stretched along its outer boundary twoards infinity, and the inner boundary moves freely to minimize the overall stretching of the fabric.

In comparing the energy of the parachute map to the Scherk map, then, we have a heuristic geometric description: this suggests their energies should be comparable from a geometric description: if $u_s(\gamma _1)$ remains a bounded distance from $w(\gamma _1)$ for all $s$, then the difference in energy between the parachute maps and the Scherk map (on their common domains of definition) should be small; if $u_s(\gamma _1)$ approaches $u_s(\gamma _s)$ as $s\nearrow \infty$, then the difference in energy will be great.

Our analysis involves studying the Hopf differential $\Phi ^{u_s}$ of $u_s$. However, it is prudent to study the Hopf differential of the doubled parachute maps, since an extra symmetry imposes a convenient analytic condition (see Proposition \ref{prop:parachute-qd-identity}). Recall that the Doubling Lemma \ref{lem:doublinglemma} realizes the parachute maps $u_s$ as restrictions of the solutions of a Dirichlet-Dirichlet boundary value harmonic mapping problem
\begin{align*}
u_s : \Omega_{\frac{1}{s},s} & \rightarrow \HH \\
u_s|_{\gamma _{s}} (z) &= w|_{\gamma _{s}} (z) \\
u_s|_{\gamma _{1/s}} (z)&= w|_{\gamma _{s}} \left(\frac{1}{\zbar}\right) .
\end{align*}
We use the names $u_s$ to mean either the parachute map or its double without ambiguity, since we will always specify the required domain of definition. We abbreviate $\Phi^s\equiv \Phi ^{u_s}$. The curve $\gamma _1$ will be referred to as the \emph{core curve}.

\subsection{Geometry of images}\label{section:toyproblem-parachute_no_folds}
In this section, we study the behavior of the parachute maps.

\begin{proposition}\label{prop:parachute-maps_non-negative-J}
The parachute maps $u_s$ have a non-negative Jacobian on $int\left(\Omega_{1,s}\right)$. 
\end{proposition}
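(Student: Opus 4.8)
The plan is to combine the Bochner identities with Wood's Gauss--Bonnet inequality, organized around the reflection symmetry produced by the Doubling Lemma~\ref{lem:doublinglemma}. Since $\Omega_{1,s}$ carries the flat metric ($K^M=0$) and the target has $K^N=-1$, the Bochner formulae collapse to
$$\Delta \log \mathcal{H}^{u_s} = \mathcal{J}^{u_s}, \qquad \Delta \log \mathcal{L}^{u_s} = -\,\mathcal{J}^{u_s}$$
away from the zeros of $\mathcal{H}^{u_s}$ and $\mathcal{L}^{u_s}$. First I would record two structural facts. The free boundary condition $(\nabla u_s)|_{\gamma_1}(\nu)=0$ forces $|(u_s)_z|=|(u_s)_{\bar z}|$, hence $\mathcal{J}^{u_s}\equiv 0$ along the core curve $\gamma_1$. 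Moreover the doubled map is the even reflection $z\mapsto 1/\bar z$ of $u_s$ across $\gamma_1$; since this reflection is anticonformal it interchanges $\mathcal{H}^{u_s}$ and $\mathcal{L}^{u_s}$, so $\mathcal{J}$ is odd across $\gamma_1$. Consequently the sign of $\mathcal{J}^{u_s}$ on the outer half $\Omega_{1,s}$ is opposite to its sign on the reflected inner half, and the whole claim reduces to showing that $\mathcal{J}^{u_s}$ does not change sign in $\mathrm{int}(\Omega_{1,s})$, the correct (non-negative) sign being fixed by matching the orientation-preserving Scherk data near $\gamma_s$.

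Next I would pin down the zero set of $\mathcal{J}^{u_s}$. Because $\Phi^s=\phi^s\,dz^2$ is holomorphic and $\|\Phi^s\|^2=\mathcal{H}^{u_s}\mathcal{L}^{u_s}\sigma$, the common zeros of $(u_s)_z$ and $(u_s)_{\bar z}$ are isolated; reading the harmonic map equation as a Bers--Vekua (pseudo-holomorphic) equation separately for $(u_s)_z$ and for $(u_s)_{\bar z}$, the similarity principle shows that each of these has only isolated zeros of positive local index unless it vanishes identically. Thus a putative orientation-reversing region in $\mathrm{int}(\Omega_{1,s})$ is an open set $\Omega^-=\{\mathcal{J}^{u_s}<0\}$ whose boundary consists of fold curves $\{\mathcal{J}^{u_s}=0,\ \Phi^s\neq 0\}$ together with isolated zeros of $(u_s)_z$; in particular $\Omega^-$ cannot meet $\gamma_s$, so each component $D$ is compactly contained in $\Omega_{1,s}\cup\gamma_1$.

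To rule $\Omega^-$ out I would apply Wood's Gauss--Bonnet inequality (Theorem~\ref{thm:wood_gauss-bonnet}) to $u_s$ restricted to $\overline{D}$. Along the bounding fold curves the image is creased and the two sides of $D$ map to the same arc, so the geodesic-curvature contributions along the interior folds are traversed in opposite senses and cancel, while the total curvature $\mathcal{K}(u_s(D))=-\mathrm{Area}\,(u_s(D))\le 0$ because $K_{\HH}=-1<0$. Wood's inequality then forces $\chi(u_s(D))\le 0$, i.e. $u_s(D)$ is non-contractible. Invoking the $D_k$- and reflection-equivariance of $u_s$ to reduce to a single symmetric sector and to identify the fold image with the axes of symmetry, this non-contractibility is incompatible with $D$ being a genuine relatively compact orientation-reversing pocket attached at most to the fold $\gamma_1$. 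Hence $\Omega^-=\emptyset$ and $\mathcal{J}^{u_s}\ge 0$ on $\mathrm{int}(\Omega_{1,s})$.

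I expect the decisive difficulty to be precisely this last step: making the Gauss--Bonnet accounting rigorous when $u_s(D)$ is multiply covered and its boundary is singular at the fold curves and at the isolated zeros of $(u_s)_z$, and verifying that the geodesic-curvature terms along the creases truly cancel. The symmetry identities for $u_s$ should carry the weight here, both by pinning any fold curve to a line of symmetry and by constraining the corresponding image arcs, so that the only configuration consistent simultaneously with Wood's inequality, the negative curvature of $\HH$, and the imposed boundary orientation is the trivial one with no interior orientation reversal.
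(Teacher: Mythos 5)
You have chosen the paper's main tool — argue by contradiction on the orientation-reversing set $\Omega_{1,s}^- := \{\mathcal{J}^{u_s}<0\}$ and apply Wood's Gauss--Bonnet inequality (Theorem \ref{thm:wood_gauss-bonnet}) to it — and your observation that a component $D$ of this set cannot reach $\gamma_s$ (where the Scherk boundary data forces $\mathcal{J}^{u_s}\geq 0$) matches the paper's setup. But the decisive steps do not hold up. First, the claimed cancellation of the geodesic-curvature terms is not valid: when Wood's inequality is applied to $u_s|_{\overline{D}}$ alone, each fold arc occurs exactly once in $\partial\left[u_s(D)\right]$, as the image of the part of $\partial D$ lying in $D$'s closure; the ``other side'' of the fold lies outside $D$ and is simply not part of the domain, so there is no second, oppositely-oriented traversal to cancel against. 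What is true — and what the paper uses — is a sign, not a cancellation: by the maximum principle for harmonic maps (\cite{JK79}), the image of $D$ lies locally on the side of each fold arc away from its curvature vector, so the geodesic curvature vector of $\partial\left[u_s(D)\right]$ points out of the region and the boundary term is $\leq 0$. This repair still yields your intermediate conclusion $2\pi\chi\left(u_s(D)\right) \leq \mathcal{K}\left(u_s(D)\right) < 0$, so the first flaw is fixable.

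The genuine gap — which you flag yourself — is the last step: you never rule out the non-contractible image. Saying that non-contractibility is ``incompatible with $D$ being a relatively compact orientation-reversing pocket'' and appealing to $D_k$- and reflection-equivariance restates what must be proven rather than proving it: a priori nothing prevents $u_s(D)$ from being an annular region inside $\mathcal{P}_k$, and the symmetries only constrain the fold set as a whole (it is $D_k$-invariant as a set; individual fold curves need not lie on symmetry axes, they may merely be permuted). The paper closes the argument at exactly this point in the opposite logical direction, and without any use of the symmetry: since $\mathcal{J}^{u_s}$ has a definite sign on $\Omega_{1,s}^-$, the map $u_s$ is a local diffeomorphism there, hence $u_s\left(\Omega_{1,s}^-\right)$ is a planar domain, whose Euler characteristic the paper asserts to be non-negative; combined with the outward-pointing geodesic curvature, Wood's inequality then gives $\mathcal{K}\left(u_s\left(\Omega_{1,s}^-\right)\right) \geq 2\pi\chi\left(u_s\left(\Omega_{1,s}^-\right)\right) \geq 0$, which contradicts the strict negative curvature of $\HH$. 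In other words, the contradiction is extracted from curvature versus the topology of the image, and the missing ingredient in your write-up is precisely a proof that $\chi\left(u_s(D)\right)\geq 0$ (equivalently, that the image cannot be multiply connected) — the statement your symmetry appeal was meant to replace.
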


\begin{proof}
Observe that the cylinder $\Omega_{1,s}$ has Euler characteristic $0$. Let us denote the subsets on which $u_s$ has positive, zero, and negative functional Jacobian by $\Omega _{1,s} ^+$, $\Omega _{1,s} ^0$, and $\Omega _{1,s} ^-$, respectively. Note that $u_s$ is orientation-preserving along the boundary $\gamma _s$, so $\Omega _{1,s} ^+$ is not empty. Suppose for a contradiction that $\Omega _{1,s} ^-$ is non-empty.

Since $\Omega _{1,s} ^0$ is the zero set of a real analytic function, it is a collection of points and curves. Hence, $ \Omega_{1,s}^+ $ and $\Omega_{1,s}^- $ are multiply connected planar domains. In particular, this implies that
\begin{align*}
\chi \left(u_s\left(\Omega_{1,s} ^+\right) \right) = \chi \left(u_s\left(\Omega_{1,s} ^-\right) \right) \in \mathbb{Z}_{\geq 0}
\end{align*}

Observe that the parachute map $u_s$ restricts onto $\Omega_{1,s}^- $ with a functional Jacobian with a definite sign. So, it is a local diffeomorphism from $\Omega_{1,s}^- $ onto its image $u_s\left(\Omega_{1,s}^- \right)$. This implies that $u_s  \left(\Omega_{1,s} ^-\right)$ is also a multiply connected planar domain. Note also that $\partial\left[ u_s  \left(\Omega_{1,s} ^-\right)\right]$ is a collection of curves for which the geodesic curvature vector points outside of the region, by the maximum principle for harmonic maps \cite{JK79}; this fact uses that we are considering $\Omega_{1,s}^-$ and not $\Omega_{1,s}^+$.

Thus, by Theorem \ref{thm:wood_gauss-bonnet}, we must have
\begin{align*}
\mathcal{K}\left(u_s\left(\Omega_{1,s} ^-\right) \right)\geq 2\pi \chi \left(u_s\left(\Omega_{1,s} ^-\right) \right) \geq 0 .
\end{align*}
We arrive at a contradiction, since $u_s\left(\Omega_{1,s} ^-\right) $ has negative Gauss curvature. Therefore $\Omega_{1,s} ^-$ must be empty, and $\mathcal{J}^u \geq 0$ on $\Omega _{1,s}$.
\end{proof}

\subsection{Hopf differentials of parachute maps}\label{section:toyproblem-parachute_qd_analysis}

In this section, we analyze the pointwise norm of the Hopf differential of the parachute maps along the core curve. The aim of the analysis is to obtain the following:

\begin{proposition}\label{prop:bounded-qd-subseqn}
For the parachute maps $u_s$ described above, there exists a positive constant $M < \infty$ and a sequence of indices $\{s \} \nearrow \infty$ so that, for all $z\in \gamma _1$, we have  $$|\phi ^s (z)|<M.$$
\end{proposition}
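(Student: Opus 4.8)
The plan is to expand the Hopf differential as a Laurent series on the annulus and to show that, apart from one ``central'' coefficient, every coefficient either vanishes by symmetry, is pinned down by the doubling reflection, or is uniformly controlled by a Cauchy estimate; the remaining central coefficient will then be tamed by a blow-up argument. Since $u_s$ is harmonic, $\Phi^s=\phi^s(z)\,dz^2$ is holomorphic on $\Omega_{1/s,s}$, so I would write $\phi^s(z)=\sum_{n}a_n^s z^n$. The $D_k$-rotational symmetry $u_s(\xi_j z)=\xi_{-j}u_s(z)$ transforms the Hopf differential by $\phi^s(z)=\xi_j^2\,\phi^s(\xi_j z)$, which forces $a_n^s=0$ unless $n\equiv -2 \pmod k$. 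The reflection intrinsic to the doubling construction (the inversion $z\mapsto 1/\bar z$ across $\gamma_1$, composed with a holomorphic isometry of $\HH$) transforms $\phi^s$ by $\phi^s(z)=z^{-4}\,\overline{\phi^s(1/\bar z)}$, yielding the coefficient symmetry $a_n^s=\overline{a_{-n-4}^s}$ about the index $-2$. Consequently, on $\gamma_1$ (where $|z|=1$) we have $|\phi^s(z)|\le\sum_n|a_n^s|$, and the only admissible indices lying in a fixed band about $-2$ are $n\in\{-k-2,-2,k-2\}$, with $a_{-k-2}^s=\overline{a_{k-2}^s}$ and $a_{-2}^s\in\R$.

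Next I would bound the tails and the outer pair of coefficients. The energy comparison $E(\Omega_{1,s},u_s)\le E(\Omega_{1,s},w)=O(s^k)$, together with the pointwise inequality $|\phi^s|\le\frac12 e^{u_s}$ coming from $\|\Phi^s\|^2=\mathcal H^s\mathcal L^s$, gives $\int_{\Omega_{1,s}}|\phi^s|\,dA=O(s^k)$. Averaging over $\rho\in[s/2,s]$ produces a radius $\rho_s\sim s$ whose circle-average $A(\rho_s):=\frac{1}{2\pi}\int_{|z|=\rho_s}|\phi^s|\,|dz|/\rho_s$ is $O(s^{k-2})$. The Cauchy estimate $|a_n^s|\le\rho_s^{-n}A(\rho_s)$ then shows that $\sum_{|n|\ge k-1}|a_n^s|\to0$ as $s\to\infty$ (using the symmetry to convert the low tail into the high one) and that $|a_{k-2}^s|=|a_{-k-2}^s|$ stays uniformly bounded. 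Thus $\sup_{\gamma_1}|\phi^s|$ is uniformly bounded along a subsequence if and only if the single real coefficient $a_{-2}^s$ is; note the same estimate only yields $|a_{-2}^s|=O(s^k)$, so this coefficient genuinely requires a separate argument.

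The heart of the proof, and the step I expect to be the main obstacle, is bounding $a_{-2}^s$ by a blow-up argument. Suppose for contradiction that $\sup_{\gamma_1}|\phi^s|\to\infty$; by the previous paragraph this forces $|a_{-2}^s|\to\infty$. I would blow up at a point $z_s\in\gamma_1$ realizing the maximum of $|\phi^s|$ there, rescaling the domain so that $\|\Phi\|$ is normalized to $1$ at the center. Proposition \ref{prop:parachute-maps_non-negative-J} and the disks of increasing radius on which the $u_s$ are diffeomorphisms (from \S\ref{section:toyproblem-parachute_no_folds}), combined with Cheng's interior gradient estimate (Theorem \ref{thm:chenglemma}), whose constants are controlled because the image stays inside the bounded polygon $\mathcal P_k$, should supply the compactness needed to extract a limiting harmonic map $u_\infty:\C\rightarrow\HH$ (extended across the limiting free-boundary line by Schwarz reflection) with $\Phi^{u_\infty}\not\equiv 0$. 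Because the term $a_{-2}^s z^{-2}$ dominates near $\gamma_1$, the rescaled Hopf differentials converge to a nonzero \emph{constant}, so $u_\infty$ has a degree-$0$ Hopf differential; by Theorem \ref{thm:scherkmap} (equivalently the uniqueness of Theorem \ref{thm:dumas-wolf_uniqueness}) its image must then lie in a degenerate ``$2$-gon'', i.e.\ a single geodesic, forcing $\mathcal J^{u_\infty}\equiv 0$. This contradicts $u_\infty$ being a nondegenerate limit of local diffeomorphisms with $\|\Phi^{u_\infty}\|=1$ at the center, so $a_{-2}^s$, and with it $\sup_{\gamma_1}|\phi^s|$, must stay bounded along a subsequence $\{s\}\nearrow\infty$, which is the claim. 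The delicate points I anticipate are verifying the blow-up compactness uniformly (ensuring the rescaled maps converge to a genuine harmonic map rather than degenerating) and confirming that the limiting Hopf differential is precisely the constant produced by the $a_{-2}^s$-term; by comparison, the symmetry reductions and the Cauchy tail estimates are routine.
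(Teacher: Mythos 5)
Your symmetry reductions and tail estimates track the paper's own proof: the rotational symmetry kills every index $n\not\equiv -2 \pmod k$, the inversion symmetry coming from the doubling pairs the index $n$ with $-n-4$, and the Cauchy-estimate-plus-energy-comparison bounds all coefficients except the central ones. Your observation that this same Cauchy estimate already bounds $|a_{k-2}^s|=|a_{-k-2}^s|$ uniformly is in fact a genuine streamlining (the paper instead runs several extra contradiction cases to control that coefficient), and you correctly isolate $a_{-2}^s$ as the one coefficient needing a separate idea. The gap is precisely in the step you flag as the heart: the contradiction you propose for $a_{-2}^s$ does not exist.

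Harmonic maps $\C\rightarrow\HH$ with constant nonzero Hopf differential, identically vanishing Jacobian, and image a complete geodesic \emph{do} exist: take $u(\zeta)=\gamma\left(a\,\mathrm{Re}\,\zeta\right)$ for a unit-speed geodesic $\gamma$ and $a>0$, which is harmonic with $\Phi^u=\tfrac{a^2}{4}\,d\zeta^2$ and $\mathcal{J}^u\equiv 0$. These are exactly the maps that Theorem \ref{thm:dumas-wolf_uniqueness} singles out in degree zero, so ``image lies in a geodesic and $\mathcal{J}^{u_\infty}\equiv 0$'' is not in tension with ``$\|\Phi^{u_\infty}\|=1$ at the center.'' Nor is it in tension with $u_\infty$ being a limit of the $u_s$: by the reflection symmetry across $\gamma_1$ (this is the content of Proposition \ref{prop:qd-equals-core-energy}), $\mathcal{J}^{u_s}\equiv 0$ \emph{on} $\gamma_1$ for every $s$, so the $u_s$ are not local diffeomorphisms at your blow-up points --- Proposition \ref{prop:parachute-maps_non-negative-J} gives $\mathcal{J}\geq 0$ only in the interior, and non-negativity passes perfectly well to a limit that is identically zero. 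Indeed, if $|a_{-2}^s|\rightarrow\infty$, the expected blow-up limit at the core curve \emph{is} such a fold-onto-a-geodesic map; no contradiction can be extracted there. The paper's actual mechanism is different and needs the geometry of the target polygon: one normalizes $\Phi^{s}/a_{-2}^{s}\rightarrow z^{-2}dz^2$, finds for \emph{every} angle $\theta$ large $|\Phi^s|$-metric disks located away from the core curve, blows up in the natural coordinate of $\Phi^s$ to get a circle's worth of limit maps onto complete geodesics, each necessarily contained in $\overline{\mathcal{P}_k}$ and with direction dictated by the horizontal foliation of $z^{-2}dz^2$; choosing $\theta$ to point at an ideal vertex of $\mathcal{P}_k$ then forces a geodesic that cannot lie in the polygon. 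Without this input (or a substitute) your argument cannot close. A secondary problem: you cannot control Cheng's constants by saying the image lies in ``the bounded polygon $\mathcal{P}_k$'' --- an ideal polygon is unbounded in $\HH$ --- and pinning the base point $u_s(z_s)$ in a compact set is essentially the Bounded Core Lemma \ref{lem:boundedcore} that this proposition is meant to establish, so the compactness of your blow-up must come from the $\Phi^s$-natural-coordinate normalization rather than from boundedness of the image.
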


We will study the Hopf differential through its Laurent expansion. So, it will be useful to observe a symmetry of $\Phi ^s$ induced by the reflectional symmetries:

\begin{proposition}\label{prop:parachute-qd-identity}
Each parachute map $u_s$ respects the two reflectional symmetries $u_s \left(\frac{1}{\overline{z}}\right)=u_s (z)$ and $u_s (z)=\overline{u_s}(\zbar)$. These symmetries induce a symmetry on the Hopf differential $\Phi ^s (z)\equiv \phi ^s(z) dz^2$: $$ \phi^s(z) = \phi ^s\left(\frac{1}{z}\right) \frac{1}{z^4}.$$
\end{proposition}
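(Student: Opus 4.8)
The plan is to collapse the two antiholomorphic symmetries into a single \emph{holomorphic} one and then invoke the tensorial (naturality) behaviour of the Hopf differential. I will write $\phi^s(z)$ for the coefficient of $dz^2$ in the pullback metric $u_s^\ast(\rho\, d\zeta\, d\bar\zeta)$, so that $\phi^s=\rho(u_s)\,(u_s)_z\,\overline{(u_s)_{\bar z}}$, where $\rho$ is the conformal factor of the Poincar\'e metric. First I would observe that composing the two given symmetries yields
$$u_s\!\left(\tfrac1z\right)=\overline{u_s(z)}.$$
Indeed, applying $u_s(z)=\overline{u_s(\bar z)}$ at the point $1/z$ gives $u_s(1/z)=\overline{u_s(1/\bar z)}$, and then the inversion symmetry $u_s(1/\bar z)=u_s(z)$ converts this into the displayed identity. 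The essential gain is that $z\mapsto 1/z$ is holomorphic, being the composition of the two orientation-reversing maps $z\mapsto 1/\bar z$ and $z\mapsto\bar z$, so this composite relation is amenable to the quadratic-differential calculus.

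Next I would compute how each side of the relation $u_s\circ\tau=c\circ u_s$ transforms, where $\tau(z)=1/z$ and $c(\zeta)=\bar\zeta$. For the holomorphic reparametrization $\tau$, a direct chain-rule computation shows the Hopf differential pulls back tensorially, $\phi^{\,u_s\circ\tau}(z)=\phi^s(\tau(z))\,(\tau'(z))^2=\phi^s(1/z)\,z^{-4}$. For the postcomposition by the target conjugation $c$, I would use that $\rho$ is invariant under $\zeta\mapsto\bar\zeta$; writing $v=\overline{u_s}$ and using $v_z=\overline{(u_s)_{\bar z}}$ together with $\overline{v_{\bar z}}=(u_s)_z$, one finds $\phi^{\,c\circ u_s}=\rho(u_s)\,\overline{(u_s)_{\bar z}}\,(u_s)_z=\phi^s$, i.e. conjugating in the target leaves the Hopf differential unchanged in this convention. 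Equating the two expressions for the Hopf differential of the single map $u_s\circ\tau=c\circ u_s$ then gives $\phi^s(z)=\phi^s(1/z)\,z^{-4}$, which is exactly the asserted identity.

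The symmetries themselves I would justify by uniqueness. The inversion symmetry $u_s(1/\bar z)=u_s(z)$ is built into the parachute map through the Schwarz-reflection construction of its double across the core curve $\gamma_1$ in the Doubling Lemma \ref{lem:doublinglemma}. The reflection symmetry $u_s(z)=\overline{u_s(\bar z)}$ follows because the Dirichlet data on $\gamma_s$ are inherited from the $x$-axis-symmetric Scherk map $w$ and are therefore invariant under $z\mapsto\bar z$ composed with target conjugation; hence $\overline{u_s(\bar z)}$ solves the same boundary value problem as $u_s$ and must coincide with it by uniqueness of the energy minimizer (as used already in Proposition \ref{prop:origin_is_real}).

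The main obstacle is bookkeeping rather than conceptual: one must track the several conjugations and carefully distinguish a derivative of $u_s$ from its evaluation at the reflected argument, without sign or slot errors, and in particular confirm the two convention-dependent transformation rules, that precomposition by $\tau$ contributes exactly the factor $(\tau')^2=z^{-4}$ while postcomposition by the orientation-reversing isometry $c$ contributes nothing. Once these rules are pinned down, the identity drops out immediately from the composite relation $u_s(1/z)=\overline{u_s(z)}$.
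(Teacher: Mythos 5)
Your proposal is correct, but it takes a genuinely different route from the paper's. The paper proves the identity by direct computation: it rewrites both symmetries in polar coordinates, differentiates to obtain polar derivative identities, expands $u_z\,\overline{u}_z$ at $z$ and at $1/z$ in terms of $u_r$ and $u_\theta$, and matches the resulting factors term by term. You instead compose the two antiholomorphic symmetries into the single relation $u_s(1/z)=\overline{u_s(z)}$, whose domain part $\tau(z)=1/z$ is holomorphic on the annulus (and preserves it), and then invoke two naturality facts: precomposition by a holomorphic map pulls the Hopf differential back tensorially, contributing $(\tau'(z))^2=z^{-4}$, while postcomposition by the target isometry $c(\zeta)=\bar\zeta$ leaves it unchanged --- the latter is even cleaner than your coordinate check, since $(c\circ u)^*\rho=u^*(c^*\rho)=u^*\rho$ as symmetric $2$-tensors (isometry invariance), so the $(2,0)$-parts coincide. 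Equating the two expressions for the Hopf differential of $u_s\circ\tau=c\circ u_s$ yields $\phi^s(z)=\phi^s(1/z)\,z^{-4}$ immediately. Your approach buys brevity, conceptual transparency, and immunity to the sign and slot bookkeeping that the paper's polar computation must carry out by hand; it also shows the identity is forced by the symmetry relation alone, with no special input from harmonicity. The paper's approach buys self-containment: it is elementary calculus with no appeal to the transformation law of quadratic differentials. Your justification of the symmetries themselves (Schwarz reflection across $\gamma_1$ from the Doubling Lemma \ref{lem:doublinglemma} for the inversion, and uniqueness of the energy minimizer, as in Proposition \ref{prop:origin_is_real}, for the conjugation) matches how the paper treats them: they are asserted from uniqueness rather than proven inside this proposition.
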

\begin{proof}
Fix any $s$. We will drop the index $s$ in this proof since it is not essential, and only clutters notation. First, note that $\phi (z) = \overline{u} _z (z) \cdot u_z (z)$. Thus, it is equivalent to show that $$ u_z(z) \cdot \overline{u}_z (z) = u_z\left(\frac{1}{z}\right) \cdot \overline{u}_z\left(\frac{1}{z}\right) \cdot \frac{1}{z^4}$$

We will proceed in polar coordinates. First, note that the symmetries re-expressed as:
\begin{eqnarray*}
u(z) = u\left(\frac{1}{\zbar}\right)& \Rightarrow & u\left(r e^{i\theta} \right) = u\left(\frac{1}{r}e^{i\theta}\right)\\
u(z) = \overline{u}(\zbar) &\Rightarrow & u\left(r e^{i\theta} \right) = \overline{u}\left(r e^{-i\theta}\right)
\end{eqnarray*}
Combining these, we obtain one polar identity
$$u\left(\frac{1}{r}e^{i\theta}\right)= \overline{u}\left(r e^{-i\theta}\right)$$
Then, taking $r$ and $\theta$ derivatives, we deduce the polar derivative identities:
\begin{eqnarray}
-\frac{1}{r^2} e^{-i\theta} u_r \left(\frac{1}{r}e^{i\theta}\right) & = e^{-i\theta} \overline{u}_r \left(r e^{-i\theta}\right) \label{identity:parachute-polar_r} \\
i\frac{1}{r} u_\theta \left(\frac{1}{r}e^{i\theta}\right) &= - ir\overline{u}_\theta \left(r e^{-i\theta}\right) \label{identity:parachute-polar_theta}
\end{eqnarray}
Note that the operator $\frac{\partial}{\partial z}$ in polar coordinates becomes
\begin{align*}
\frac{\partial}{\partial z} & = e^{-i\theta} \left( \frac{1}{2} \frac{\partial}{\partial r} - i \frac{1}{r}\frac{\partial}{\partial \theta} \right)
\end{align*}
We will use these identities to verify the equality.

First, we compute $u_z (z) \cdot \overline{u}_z(z)$ to evaluate the left-hand side:
\begin{align*}
\left. \left[ u_z \overline{u}_z \right] \right|_{z} &= \left. \left[ e^{-i\theta} \left( \frac{1}{2} u_r - i r u_{\theta}  \right) \right]\right|_{z=re^{i\theta}}\cdot \left. \left[ e^{-i\theta} \left( \frac{1}{2} \overline{u}_r - i r \overline{u}_{\theta}  \right) \right]\right|_{z=re^{i\theta}}\\
& = e^{-i2\theta} \left[ \frac{1}{2} u_r (re^{i\theta}) - i r u_{\theta} (re^{i\theta}) \right] \left[ \frac{1}{2} \overline{u}_r (re^{i\theta}) - i r \overline{u}_{\theta} (re^{i\theta}) \right].
\end{align*}

Second, we compute $u_{z}\left(\frac{1}{z}\right)$, $\overline{u}_{z} \left(\frac{1}{z}\right)$, and $\frac{1}{z^4}$ and combine them to evaluate the right-hand side:
\begin{align*}
\left. \frac{\partial}{\partial z} u \right|_{\frac{1}{z}} & = \left. e^{-i\theta} \left( \frac{1}{2} u_r - i r u_{\theta}  \right)\right|_{\frac{1}{z}=\frac{1}{r}e^{-i\theta}}\\
& = e^{i \theta}\left( \frac{1}{2} u_r \left( \frac{1}{r} e^{-i\theta} \right) - i \frac{1}{r} u_\theta\left(\frac{1}{r}e^{-i\theta}\right)\right)
\end{align*}
and also
\begin{align*}
\left. \frac{\partial}{\partial z} \overline{u} \right|_{\frac{1}{z}} & = \left. e^{-i\theta} \left( \frac{1}{2} \overline{u}_r - i r \overline{u}_{\theta}  \right)\right|_{\frac{1}{z}=\frac{1}{r}e^{-i\theta}}\\
& = e^{i \theta}\left( \frac{1}{2} \overline{u}_r \left( \frac{1}{r} e^{-i\theta} \right) - i \frac{1}{r} \overline{u}_\theta\left(\frac{1}{r}e^{-i\theta}\right)\right)
\end{align*}
and of course
\begin{align*}
\frac{1}{z^4}&= \frac{1}{r^4} e^{-i4\theta}
\end{align*}
Thus, their product forms the right-hand side:
\begin{align*}
\left.\left[ u_z \cdot \overline{u}_z \right] \right|_{\frac{1}{z}} \frac{1}{z^4}&=\frac{1}{r^4} \cdot e^{-i 2\theta}\left[ \frac{1}{2} u_r \left( \frac{1}{r} e^{-i\theta} \right) - i \frac{1}{r} u_\theta\left(\frac{1}{r}e^{-i\theta}\right)\right] \cdot \left[ \frac{1}{2} \overline{u}_r \left( \frac{1}{r} e^{-i\theta} \right) - i \frac{1}{r} \overline{u}_\theta\left(\frac{1}{r}e^{-i\theta}\right)\right]\\
&= e^{-i 2\theta}\left[ \frac{1}{2} \frac{-1}{r^2} u_r \left( \frac{1}{r} e^{-i\theta} \right) + i \frac{1}{r^3} u_\theta\left(\frac{1}{r}e^{-i\theta}\right)\right] \cdot \left[ \frac{1}{2} \frac{-1}{r^2} \overline{u}_r \left( \frac{1}{r} e^{-i\theta} \right) + i \frac{1}{r^3} \overline{u}_\theta\left(\frac{1}{r}e^{-i\theta}\right)\right]
\end{align*}

Finally, we will verify that $$u_z(z) \cdot \overline{u}_z (z) = u_z\left(\frac{1}{z}\right) \cdot \overline{u}_z\left(\frac{1}{z}\right) \cdot \frac{1}{z^4}$$
by using the polar derivative identities. Note that the identities (\ref{identity:parachute-polar_r}) and (\ref{identity:parachute-polar_theta}) hold for all $r$ and $\theta$, so in particular we can compare term by term:
\begin{align*}
\frac{1}{2} u_r\left(r e^{i\theta} \right) & = \frac{1}{2} \frac{-1}{r^2} \overline{u}_r \left(\frac{1}{r} e^{-i\theta}\right) \mbox{ using $(\frac{1}{r}, \theta)$ for $(r, \theta)$ in equation \eqref{identity:parachute-polar_r}}\\
-i r u_\theta \left( r e^{i\theta}\right) &= i \frac{1}{r^3} \overline{u}_\theta \left(\frac{1}{r}e^{i\theta}\right) \mbox{ using $(\frac{1}{r}, \theta)$ for $(r, \theta)$ in equation \eqref{identity:parachute-polar_theta}}\\
\frac{1}{2}\overline{u}_r\left(r e^{i\theta}\right) & = \frac{1}{2} \frac{-1}{r^2} u_r \left( \frac{1}{r}e^{-i\theta} \right) \mbox{ using $(r, \theta)$ for $(r, -\theta)$ in equation \eqref{identity:parachute-polar_r}}\\
-i r \overline{u}_\theta \left( r e^{i\theta} \right) & = i\frac{1}{r^3} u_\theta \left( \frac{1}{r} e^{-i\theta}\right) \mbox{ using $(r, \theta)$ for $(r, -\theta)$ in equation \eqref{identity:parachute-polar_theta}}
\end{align*}
Thus, we see that: the first factor of the left hand side coincides with the second factor of the right hand side, and the second factor of the left hand side coincides with the first factor of the right hand side (with the equal summands of each factor appearing in the same order).
\end{proof}

Now we will prove Proposition \ref{prop:bounded-qd-subseqn}. We recall its statement here:

\newtheorem*{prop:bounded-qd-subseqn}{Proposition \ref{prop:bounded-qd-subseqn}}
\begin{prop:bounded-qd-subseqn}
For the parachute maps $u_s$ described above, there exists a positive constant $M < \infty$ and a sequence of indices $\{s \} \nearrow \infty$ so that, for all $z\in \gamma _1$, we have  $$|\phi ^s (z)|<M.$$
\end{prop:bounded-qd-subseqn}

\begin{proof}
Consider the Laurent series for the coefficients of the sequence of Hopf differentials:
$$ \phi^s (z) = \sum _{n=-\infty} ^{\infty} c_n ^s z^n$$
where each coefficient is obtained by Cauchy's integral formula along a contour $\gamma $
$$ c_n^s \equiv \oint _{\gamma} \frac{\phi ^s (\zeta)}{\zeta ^{n+1}} d|\zeta| $$
This expression is valid in the domain of holomorphicity of $\phi ^s$, which contains $\Omega _{\frac{1}{s},s}$

Let us get a better handle on the coefficients of these series. The $\frac{2\pi }{k}$ rotational symmetries of the map $u_s$ imply that $ \Phi (z) = \Phi\left(e^{\frac{2\pi i}{k}l}z\right)$ for each $l = 0, \ldots ,k-1$. Hence, $$ \phi(z) dz^2 = \phi\left(e^{\frac{2\pi i}{k}l}z\right) \left(e^{\frac{2\pi i}{k}l}\right)^2 dz^2$$
In terms of the Laurent coefficients, by uniqueness of the Laurent expansion for $\phi ^s(z)$, we must have
\begin{align*}
c_n^s &= c_n ^s e^{n\frac{2\pi i}{k}l} e^{2\frac{2\pi i}{k+2}l}\\
& = c_n ^s e^{\frac{2\pi i}{k+2} (n+2)l}
\end{align*}
for all $l = 0, \ldots , k-1$. In particular, for all indices $n$ for which $k$ does not divide $n+2$, the coefficient $c_n ^s$ must vanish. This leaves us with an expansion given by
\begin{align*}
\phi ^s (z) &= \cdots + \frac{c_{-(2k+2)}}{z^{{2k+2}}} + \frac{c_{-(k+2)}}{z^{k+2}} + \frac{c_{-2}}{z^{2}} + c_{k-2} ^s  z^{k-2} + c_{2k-2} z^{2k-2}+ \cdots\\
& = \left(\sum _{n=2} ^{\infty} \frac{c_{-nk-2}}{z^{nk+2}} \right)+ \left[ \frac{c_{-(k+2)}}{z^{k+2}} + \frac{c_{-2}}{z^{2}} + c_{k-2} ^s  z^{k-2} \right] + \left( \sum _{n=2} ^{\infty} c_{nk-2} z^{nk-2} \right)
\end{align*}
We separate the series into three parts as indicated by the parenthetical grouping: respectively, the negative tail, the central terms being the sum of the index $-(k+2)$, $-2$, and $k-2$ terms, and the positive tail.

Our plan of attack will be to bound the norm of each of these parts, with the bounds holding for every point on the core curve and along a sequence of indices $\{s_j\}$. Recalling the Cauchy integral expression for $c_n ^s$, we will choose appropriate contours to bound the norms of the infinite tails. Then, we will control the central terms by analyzing the possibilities for those three coefficients.

First, let us focus on the positive tail. For the coefficients indexed by $n> 0$, we can adopt the contour $\gamma _\rho := \{ \rho e^{i\theta} |0\leq \theta \leq 2 \pi\}$ in their Cauchy integral expressions, valid for $\rho \in \left(\frac{1}{s}, s\right)$, to obtain:
\begin{align*}
c_n ^s & \equiv \oint _{\gamma _\rho} \frac{\phi ^s (\zeta)}{\zeta ^{n+1}} d|\zeta| \\
\Rightarrow |c_n^s| & \leq \oint _{\gamma _\rho} \frac{|\phi ^s (\zeta)|}{\rho ^{n+1}} dl
\end{align*}
Multiplying both sides by $2\rho$ and integrating over the interval $\rho\in (s-s^\ell,s)$, we see that:
\begin{align*}
\int _{s-s^\ell} ^s |c_n^s| \rho d\rho & \leq 2 \int _{s-s^\ell} ^{s} \oint _{\gamma_\rho} \frac{|\phi ^s (\zeta)|}{\rho ^{n}} \, dl \, d\rho\\
\Rightarrow |c_n^s| \left[ s^2 -\left(s-s^\ell\right)^2\right] & \leq \frac{2}{(s-s^\ell)^{n}} \int _{s-s^\ell} ^s \oint |\phi ^s(\zeta)| \, dl \, d\rho \\
\Rightarrow |c_n^s| & \leq \frac{2}{s \left(2 s^{\ell} -s ^{2\ell -1}\right)  (s-s^\ell)^{n}} \int _{s-s^\ell} ^s \oint |\phi ^s(\zeta)| \, dl \, d\rho
\end{align*}
valid for any $\ell\in \left(0,\frac{\log(s-1)}{\log(s)}\right)$. Note that the value $\ell = \frac{\log(s-1)}{\log(s)}$ yields $s-s^{\ell} = 1$, and the integral on the right hand side is exactly the $\mathcal{L}^1$-norm of $\Phi ^s$ on $\Omega_{1,s}$.

Thus, noting that $2|\phi ^s| \leq e^s$ and $E\left(u_s, \Omega _{s-s^{\ell},s}\right) \leq E\left(u_s, \Omega _{1,s}\right)  \leq E(w, \Omega _s) $, we establish
\begin{align*}
|c_n^s| & \leq \frac{2}{s \left(2 s^{\ell} -s ^{2\ell -1}\right)  (s-s^\ell)^{n}} \int _{s-s^{\ell}} ^s \oint |\phi ^s(\zeta)| \, dl \, d\rho \\
 & \leq \frac{1}{s \left(2 s^{\ell} -s ^{2\ell -1}\right)  (s-s^\ell)^{n}} E(u_s, \Omega _{s-s^\ell,s})\\
 & \leq \frac{1}{s \left(2 s^{\ell} -s ^{2\ell -1}\right)  (s-s^\ell)^{n}} E(u_s, \Omega _{1,s})\\
 & \leq \frac{1}{s \left(2 s^{\ell} -s ^{2\ell -1}\right)  (s-s^\ell)^{n}} E(w, \Omega _{1,s})
\end{align*}
The right hand side can be controlled because the energy density of the Scherk map $w$ obeys the estimate $$ e^w (z) \leq |z^{k-2}| + \mathcal{J}^w (z).$$
Here, $\int _{\C} \calJ ^w dA = 2\pi (k-2)$ since the area of $w(\C)$ is given by the area of an ideal $k$-gon. Thus,
\begin{align*}
|c_n^s| & \leq \frac{1}{s \left(2 s^{\ell} -s ^{2\ell -1}\right)  (s-s^\ell)^{n}} \left[ \int _{\Omega _s} \rho^{k-2} dA + 2\pi (k-2)\right] \\
& \leq \frac{1}{s \left(2 s^{\ell} -s ^{2\ell -1}\right)  (s-s^\ell)^{n}} \left[ \frac{2\pi }{k} s^{k} + 2\pi (k-2)\right]
\end{align*}
Note that $\lim_{s\nearrow +\infty} s^{1-\frac{log(s-1)}{log(s)}}=1$. Hence, there exists $D>0$ so that, for $s$ large enough, we have
\begin{align*}
|c_n^s| & \leq D s^{k-n-2}.
\end{align*}

This inequality can be used to obtain a bound on the norm of the positive tail on the core curve. We have:
\begin{align*}
\left| \sum _{n=2} ^{\infty} c^s_{nk-2} z^{nk-2} \right| & \leq \sum _{n=2} ^{\infty} \left| c^s_{nk-2} \right|\\
& \leq D \sum _{n=2} ^{\infty}  s^{k-[nk-2]-2} \\
& \leq D \sum _{n=2} ^{\infty}  s^{-k(n-1)} \\
& \leq D \frac{1}{1-\frac{1}{s^{k}}}
\end{align*}

Next, let's consider the negative tail. By Proposition \ref{prop:parachute-qd-identity}, since the parachute maps satisfy the reflectional symmetries $u\left(\frac{1}{\overline{z}}\right)=u(z)$ and $u(z)=\overline{u}(z)$, the Hopf differential satisfies the identity $$ \phi ^s (z) = \phi ^s \left(\frac{1}{z}\right) \frac{1}{z^4}.$$ Referring to the Laurent expansion for the Hopf differential, this reveals the identity
$$ c_{-(n+2)} ^s = c_{n-2} ^s$$
among the coefficients - relating them so that the sequence of coefficients is symmetric about the $n=-2$ index. Thus, the norm of the negative tail can be bounded along the core curve:
\begin{align*}
\left|\sum _{n=2} ^{\infty} \frac{c^s _{-nk-2}}{z^{nk+2}} \right| &\leq \sum _{n=2} ^{\infty} \left|c^s _{-nk-2} \right|\\
&\leq \sum _{n=2} ^{\infty} \left|c^s_{nk-2} \right|\\
& \leq D \frac{1}{1-\frac{1}{s^{k}}}
\end{align*}
as with the positive tail. Hence, for all $s$ large enough, for any $z\in \gamma _1$, we have bounded
$$ |\phi ^s (z)| \leq D+ \left| \frac{c^s_{-(k+2)}}{z^{k+2}} + \frac{c^s_{-2}}{z^2} + c_{k-2} ^s z^{k-2} \right|$$
where $D=D(k)$ is a positive constant given by the bound on the moduli of the tail coefficients.

Finally, let us turn to the central terms. We will relabel the coefficients as $C^s:=c^s_{-(k+2)}=c_{k-2} ^s$ and $D^s = c^s_{-2} $. Thus far, we've deduced that the central terms have the form
\begin{align*}
\frac{C^s}{z^{k+2}} + \frac{M^s}{z^2} + C^s z^{k-2} & = C^s \frac{1}{z^{k+2}} \left[ 1 + \frac{M^s}{C^s} z^{k} + z^{2k+2} \right]
\end{align*}
The symmetry $u_s (\overline{z}) = \overline{u_s (z)}$ ensures $C^s$ and $M^s$ are real for all $s$. Thus, along the unit circle, we have:
\begin{align*}
\left|\frac{C}{z^{k+2}} + \frac{M}{z^2} + C z^{k-2} \right| & \leq \left|\frac{C}{z^{k+2}}\right| + \left|\frac{M}{z^2}\right| + \left|C^s z^{k-2} \right|\\
& \leq 2|C^s| + |M^s|
\end{align*}
From this expression, it is clear that we would like to obtain a sequence $\{s_j\}$ for which both $|C^{s_j}|$ and $|M^{s_j}|$ are bounded uniformly. Thus, let $\{ s_j \}$ be any sequence tending to $+\infty$. From this sequence, we will extract a sub-sequence for which both $|C^{s_j}|$ and $|M^{s_j}|$ are bounded uniformly.

In the final analysis, it will be convenient to have both $C^{s_j}$ and $M^{s_j}$ non-vanishing for any $s_j$. To sufficiently reduce the analysis to this hypothesis, we will simply drop all the indices $s_j$ for which either $C^{s_j}$ or $M^{s_j}$ vanishes. This reduction requires us to delete either finitely many or infinitely many indices. If we are fortunate enough to require deleting infinitely many indices, we can instead extract a sub-sequence $\{s_l\}$ for which: either $C^{s_l}$ or $M^{s_l}$ vanish identically for all $s_l$.

Our process for producing the sub-sequence (and our plan of attack) is thus: produce a sub-sequence (also labeled $\{s_j\}$, by abuse of notation) along which $|C^{s_l}|$ and $|M^{s_l}|$ are bounded uniformly in each case.
\begin{enumerate}
\item[1.] If the reduction requires dropping infinitely many terms, then from those terms we can extract a subsequence $\{ s_j\}$ for which one of the following must hold:
    \begin{enumerate}
    \item for all $s_j$, we have $C^{s_j} = 0$ and $W^{s_j}= 0$; or
    \item for all $s_j$, we have $C^{s_j} = 0$ and $W^{s_j}\neq 0$; or
    \item for all $s_j$, we have $W^{s_j} = 0$ and $C^{s_j}\neq 0$
    \end{enumerate}
\item[2.] or else, if the reduction only requires dropping finitely many terms, we reduce the sequence so that
    \begin{itemize}
    \item[(d)] for all $s_j$, we have both $C^{s_j}\neq 0$ and $W^{s_j}\neq 0$
    \end{itemize}
\end{enumerate}

In case (a), with both coefficients vanishing identically, we trivially have $|\phi ^s(z)|$ uniformly bounded by $D(k)$ along the unit circle. For the (b) and (c) cases, we will argue by contradiction that there must be bounded subsequences of $|C^{s_j}|$ and $|M^{s_j}|$; we'll use two different geometric considerations to temper the norm of the non-vanishing coefficient. Finally, we will conclude the analysis with case (d) by studying the ratios of the coefficients and the underlying foliation structure of its Hopf differential.

In case (b), we have $C^{s_j}=0$, but $M^{s_j}\neq 0$. This suggests that having $|M^{s_j}|$ unbounded would lead to the distance between the core curve and the nearest zero in the $\Phi ^{s_j}$ metric becoming unbounded. In particular, this allows us to obtain the absurd statement: we can find a circle's worth of very large $\Phi ^{s_j}$-disks, on each of which $u^{s_j}$ converges to a map of $\C$ onto a geodesic in $\HH$, all while retaining its $k$-fold symmetry. Let us make this argument precise.

Suppose for a contradiction that $|M^{s_j}|$ is unbounded. Then there exists a subsequence $\{s_l\}$ for which $|M^{s_l}| \nearrow + \infty$. Consider the family of Hopf differentials $$\left\{ \frac{1}{M^{s_l}} \Phi ^{s_l}\right\}.$$ It forms a normal family (on compact subsets) since $$\left| \frac{1}{M^s} \Phi ^s \right| \leq \left|\frac{1}{z^2} \right| + \frac{1}{M^s} D(k)\left(|z| + \frac{1}{|z|} \right)$$
Thus, $\{ \frac{1}{M^{s_l}}\Phi ^{s_l} \}$ converges pointwise, and hence uniformly on compact subsets, to $\frac{1}{z^2}dz^2$ on $\C ^*$ in $s_l$. The zeros of $\{\frac{1}{M^{s_l}}\Phi ^{s_l}\}$ converge uniformly in $s_l$ to the zeros of $\frac{1}{z^2}dz^2$ - of which there are none. Hence, the original sequence $\{ \Phi ^{s_l}\}$ contains $\Phi ^{s_l}$-disks of size $M^{s_l}$.

Fix an angle $\theta$. We can find a fixed disk $Z(\theta) \subset \Omega _{1,s_l}$ so that $Z(\theta)$ has diameter at least $\frac{M^{s_l}}{2}$ in the $\Phi^{s_l}$-metric. In addition, we can choose it to lie away from the core curve $\gamma _1$ and with a center at an angle $\theta$ relative to the $x$-axis. By Proposition \ref{prop:parachute-maps_non-negative-J}, it follows that the Jacobian of $u_s$ is non-negative on this disk. Hence, the sequence of induced maps $$ (Z(\theta),d_{\Phi^{s_l}})\rightarrow \HH $$ converges to a map of $\C \rightarrow \HH$, since the sequence of metrics induced by $\Phi ^{s_l}$ on $Z(\theta)$ converges to the standard metric on $\C$.

From this procedure, for each $\theta$ we obtain a harmonic map from $\C$ to the hyperbolic plane. As each domain was obtained by using the $\Phi^{s_l}$-metric, the maps always have a constant (depending on $\theta$), non-zero Hopf differential. Here, we can apply Dumas-Wolf's uniqueness Theorem \ref{thm:dumas-wolf_uniqueness}: by interpreting the theorem as a statement yielding uniqueness of harmonic immersions from the plane into $\HH$, we know that each of these maps must be maps onto a geodesic.

Furthermore, we have a whole circle's worth of maps from $\C$ to $\HH$ whose images must be contained in the ideal $k$-sided polygon which bounds the images of $u^{s_l}$. In particular, we can choose $\theta$ to lie in the direction of one of the ideal corners of the polygon. However, we cannot obtain such a limiting map whose image is a geodesic perpendicular to this direction (as the horizontal foliation lies orthogonal to this direction) since such a geodesic cannot be contained in the regular ideal polygon. Hence, there cannot exist a subsequence $\{ s_l\}$ for which $|M^{s_l}| \nearrow + \infty$, and $\{ |M^{s_j}|\}$ must contain a bounded subsequence.

In case (c), we have $M^{s_j}=0$, but $C^{s_j}\neq 0$. This suggests that having $|C^{s_j}|$ unbounded would lead to too much energy (when compared to the Scherk map). Let us make this precise. Suppose for a contractiction that $\{|C^{s_j}|\}$ does not contain a bounded subsequence, so that it contains a subsequence with $\{|C^{s_l}|\} \nearrow + \infty$ monotonically. Observe, then, that
\begin{align*}
|\Phi ^{s_l}(z) | & \geq \left|C^{s_l} z^{k-2}\right| - \left|\frac{C^{s_l}}{z^{k+2}}\right| - D(k) \left( \rho + \frac{1}{\rho} \right) \\
& \geq |C^{s_l}| \left( \rho ^{k-2} - \frac{1}{\rho ^{k+2}} \right) - D(k) \left( \rho + \frac{1}{\rho} \right)
\end{align*}
Thus, it follows that the energy density of $u_s$ can be bounded from below by
\begin{align*}
e^{s_l}(z) & \geq |\Phi ^{s_l}(z) |\\
 & \geq |C^{s_l}| \left( \rho ^{k-2} - \frac{1}{\rho ^{k+2}} \right) - D(k) \left( \rho + \frac{1}{\rho} \right)
\end{align*}
where $D(k)$ is the constant from the proof of Proposition \ref{prop:bounded-qd-subseqn}.

This suggests that the energy density is growing at a rate of $\rho ^{k-2}$, with a proportional constant $\left| C^{s_l}\right|$. However, comparing to the Scherk map, we necessarily have
\begin{align*}
E(\Omega_{1,s}, u_s) &\leq E(\Omega_{1,s}, w) +  E(\Omega_{1}, w)\\
\Rightarrow E(\Omega_{1,s}, u_s) &\leq E(\Omega_{1,s}, w) +  T
\end{align*}
where $T:= E(w , \Omega_{1})$. Furthermore, the energy density of the Scherk map satisfies
$$ e^w(z) = \rho ^{k-2} + \calJ (z) $$
for which $\calJ ^w > 0$ and $ \int _{\Omega _{1,s}} \calJ \, dA \leq 2\pi k$.
Note that the energy estimate is supposed to hold on the domain $\Omega _{1,s}$ Hence, for all $s>1$,
\begin{align*}
\int_{\Omega_{1,s}} e^w(z) dA- \int_{\Omega_{1,s}} e^s(z) dA+ T \geq 0
\end{align*}
\begin{align*}
\Rightarrow \int_{\Omega_{1,s}} \rho ^{k-2} \, dA + 2\pi k - \int_{\Omega_{1,s}} \left[|C^{s_l}| \left( \rho ^{k-2} - \frac{1}{\rho ^{k+2}} \right)- D(k) \left( \rho + \frac{1}{\rho} \right)\right] \,dA + T \geq 0\\
\Rightarrow \int_{\Omega_{1,s}} \left[\left( 1 - |C^{s_l}| \right) \rho ^{k-2} - |C^{s_l}| \frac{1}{\rho^{k+2}} -\int_{\Omega_{1,s}} D(k) \left( \rho + \frac{1}{\rho} \right) \right] dA \geq -2\pi k - T
\end{align*}
The right hand side is a constant yet, for $s$ large enough, the left hand side becomes arbitrarily large and negative since $|C^{s_l}|\nearrow +\infty$. This is a contradiction. Hence, there cannot exist a subsequence with $|C^{s_l}|\nearrow +\infty$. In other words, when the original subsequence $\{C^{s_j}\}$ falls into case (c), we can choose a subsequence of it for which $\{|C^{s_l}|\}$ stays bounded.

Finally, we are left with considering the fourth possibility, for which $C^{s_j}\neq 0$ and $W^{s_j}\neq 0$ for all $s_j$. In this situation, we can consider the sequence of norms of the ratios of the coefficients. In $\widehat{\R _{\geq 0}}= \R _{\geq 0} \cup \{ + \infty \}$, the possibilities are:
\begin{enumerate}
\item[(e)] there exists a sequence $\{ s_j \}\nearrow +\infty$ along which $\left|\frac{C^{s_j}}{M^{s_j}}\right| \nearrow +\infty$, or
\item[(f)] there exists a sequence $\{ s_j \}\nearrow +\infty$ along which $\left|\frac{M^{s_j}}{C^{s_j}}\right| \nearrow +\infty$, or
\item[(g)] there exists a sequence $\{ s_j \}\nearrow +\infty$ along which $\left|\frac{C^{s_j}}{M^{s_j}}\right| \rightarrow D \in \R_{+}$
\end{enumerate}
We will proceed by contradiction to rule out the possibility of cases (e) and (f). The flavor of these arguments will resemble the purely vanishing cases (b) and (c) above. The resolution of (g) will follow from the fact that, in this case, either {\em both} $|C^{s_j}|$ and $|M^{s_J}|$ diverge or {\em both} stay bounded. The last step is to rule out their simultaneous divergence, again by finding a circle's worth of large $\Phi ^s$-disks as in (b).

In case (e), there exists a sequence $\{ s_j \}\nearrow +\infty$ along which $\left|\frac{C^{s_j}}{M^{s_j}}\right| \nearrow +\infty$. Then there exists a subsequence $\{s_l\}$ so that $|M^{s_l}| < |C^{s_l}|$ and $|C^{s_l}|\nearrow + \infty$. We employ an energy estimate similar to that in case (c):
\begin{align*}
|\Phi ^{s_l}(z) | & \geq \left|C^{s_j} z^{k-2}\right| - \left|\frac{C^{s_j}}{z^{k+2}}\right| - \left|\frac{M^{s_j}}{z^2}\right| - D(k) \left( \rho + \frac{1}{\rho} \right) \\
& \geq |C^{s_l}| \left( \rho ^{k-2} - \frac{1}{\rho ^{k+2}} \right) - |M^{s_l}|\frac{1}{\rho ^2} - D(k) \left( \rho + \frac{1}{\rho} \right)\\
& \geq |C^{s_l}| \left( \rho ^{k-2} - \frac{1}{\rho ^{k+2}} \right) - |C^{s_l}|\frac{1}{\rho ^2}- D(k) \left( \rho + \frac{1}{\rho} \right)
\end{align*}
\begin{align*}
\Rightarrow e^{s_l}(z) \geq |C^{s_l}| \left( \rho ^{k-2} - \frac{1}{\rho ^{k+2}}  - \frac{1}{\rho ^2}\right) - D(k) \left( \rho + \frac{1}{\rho} \right)
\end{align*}
At this point, we reach the same conclusion as in case (c) by contradicting the necessary energy comparison $E(u^{s_l},\Omega_{1,s_l}) \leq E(w,\Omega_{s_l})$. Hence, no subsequence $\{s_j\}$ can be extracted which falls into case (e).

In case (f), there exists a sequence $\{ s_j \}\nearrow +\infty$ along which $\left|\frac{M^{s_j}}{C^{s_j}}\right| \nearrow +\infty$. Then there exists a subsequence $\{s_l\}$ so that $|C^{s_l}| < |M^{s_l}|$ and $|M^{s_l}|\nearrow + \infty$. As in case (b), we can consider the family of holomorphic quadratic differentials
$$ \left\{ \frac{1}{M^{s_l}} \Phi^{s_l} \right\}.$$
Since $\left|\frac{M^{s_j}}{C^{s_j}}\right| \nearrow +\infty$, we obviously have the reciprocal satisfying $\left|\frac{C^{s_j}}{M^{s_j}}\right| \searrow 0$. Hence,
$$ |\Phi^{s_l}(z)| \leq \left| \frac{1}{z^2}\right| + \left|\frac{C^{s_l}}{M^{s_l}}\right|\left(z^{k-2} + \frac{1}{z^{k+2}} \right) + \frac{1}{M^{s_l}} D(k)\left( |z| +\frac{1}{|z|}\right)$$
This implies the family $\left\{ \frac{1}{M^{s_l}} \Phi^{s_l} \right\}$ constitutes a normal family on compact subsets. Thus, $\{ \frac{1}{M^{s_l}}\Phi ^{s_l} \}$ converges pointwise, and hence uniformly on compact subsets, to $\frac{1}{z^2}dz^2$ on $\C ^*$ in $s_l$. At this point, we reach the same conclusion as in case (b) by contradicting the existence of a circle's worth of maps to geodesics. Hence, no subsequence $\{s_j\}$ can be extracted which falls into case (f).

In case (g), there exists a sequence $\{ s_j \}\nearrow +\infty$ along which $\left|\tfrac{C^{s_j}}{M^{s_j}}\right| \rightarrow D \in \R_{+}$. Suppose for a contradiction that $|C^{s_j}|$ and $|M^{s_j}|$ simultaneously diverge.
Consider the family of holomorphic quadratic differentials
$$ \left\{ \frac{1}{M^{s_l}} \Phi^{s_l} \right\}$$
on $\Omega_{1,s}$. Observe that we have the bound
$$ \left| \frac{1}{C^{s_j}} \Phi^{s_j} (z)\right| \leq \left| z^{k-2} + \frac{C^{s_j}}{M^{s_j}}\frac{1}{z^2} + \frac{1}{z^{k+2}}\right| + \frac{1}{C^{s_j}}D(k)\left(|z|+\frac{1}{|z|}\right).$$
Hence, the family converges pointwise (and uniformly on compact subsets) to
$$ \left( z^{k-2} + D\frac{1}{z^2} + \frac{1}{z^{k+2}}\right) dz^2$$
on $\Omega_{1,s}$. Finally, observe that the coefficient has the form
$$ \frac{1}{z^{k+2}}\left( z^{k} -N\right) \left( z^{k} - \frac{1}{N}\right) dz^2$$
where
$$ N + \frac{1}{N} = D$$
Hence, in the $\Phi ^{s_l}$ metric, we can find larger $\Phi ^{s_l}$-disks away from the core curve. Again, we can find a circle's worth of maps to geodesics (like in case (b)) and obtain a contradiction. Thus, $|C^{s_j}|$ and $|M^{s_j}|$ cannot simultaneously diverge. Since we have $\left|\frac{C^{s_j}}{M^{s_j}}\right| \rightarrow D \in \R_{+}$, it must be that $|C^{s_j}|$ and $|M^{s_j}|$ both stay bounded.

\end{proof}

\subsection{Bounded Core Lemma}\label{section:toyproblem-bounded_core_lemma}
We will call the curve $\gamma _1$ the core curve, since it is fixed by the reflectional symmetry on the domain $\Omega_{\frac{1}{s},s}$. Note that it is the boundary component of $\Omega_{1,s}$ on which the harmonic mapping problem \eqref{eqn:PFDrs} prescribes a free boundary.

Here, we state the Bounded Core Lemma: Consider the Scherk map restriction $$w:\Omega \rightarrow \HH.$$ If we ``poke a hole of size $\Omega _r$" in the domain $\Omega _s$, and flow the harmonic map on its changed domain to allow the hole to move freely, the Bounded Core Lemma purports that the hole does not move very far from the origin $\mathcal{O}\in \HH$, even though the hole itself may deform.

\begin{lemma} \label{lem:boundedcore}
{\em (Bounded Core Lemma)} Consider the parachute map $u_s : \Omega _{1,s} \rightarrow \HH$ as above. Let $\gamma _1:=\{|z|=1 \}$ denote the core curve and $\mathcal{O}$ denote the origin in the Poincare disk model for $\HH$. Then there exists a sequence of indices $\{s_j\in \R_{\geq 0 }\}_j$ with $s_j \rightarrow \infty$ so that we have $$d(\mathcal{O} , u^{s_j}(z)) \leq M$$ for any $z\in \gamma _1$, for some $M = M(k)$ depending only on the degree $k$ of the Scherk map's Hopf differential.
\end{lemma}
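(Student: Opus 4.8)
The plan is to reduce the lemma to two facts: that the image of the core curve under $u_s$ has uniformly bounded \emph{length} along a subsequence, and that at least one point of $u_s(\gamma_1)$ stays within a fixed distance of $\mathcal{O}$. Once both are in hand, the triangle inequality finishes the argument. The length bound will come from the free boundary condition together with the Hopf-differential bound already established in Proposition \ref{prop:bounded-qd-subseqn}; the pinned point will come from the reflectional symmetry.

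First I would exploit the free boundary condition $(\nabla u_s)|_{\gamma_1}(\nu)=0$, which says the radial derivative $\partial_r u_s$ vanishes identically on $\gamma_1$. Writing $\partial_z = \tfrac12 e^{-i\theta}\left(\partial_r - \tfrac{i}{r}\partial_\theta\right)$ and working in the flat conformal metric $\sigma \equiv 1$ on $\Omega_{1,s}$, the vanishing of $\partial_r u_s$ on $\gamma_1$ forces $|(u_s)_z| = |(u_s)_{\bar z}| = \tfrac12 |(u_s)_\theta|$ there, hence $\mathcal{H}^{u_s} = \mathcal{L}^{u_s}$ and $\mathcal{J}^{u_s} = 0$ on the core curve. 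Consequently the energy density reduces to pure tangential energy and satisfies $e^{u_s} = 2|\phi^s|$ on $\gamma_1$, so the hyperbolic speed of the image curve is $|du_s(\partial_\theta)|_\rho = 2\sqrt{|\phi^s|}$. Applying Proposition \ref{prop:bounded-qd-subseqn} produces a constant $M_0 = M_0(k)$ and a sequence $s_j \nearrow \infty$ with $|\phi^{s_j}| < M_0$ on $\gamma_1$, and therefore the length of the image of the core curve obeys $L\!\left(u^{s_j}(\gamma_1)\right) = \int_0^{2\pi} 2\sqrt{|\phi^{s_j}|}\, d\theta \leq 4\pi\sqrt{M_0}$ for every $j$.

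Next I would pin a point. The reflectional symmetry $u_s(z) = \overline{u_s(\bar z)}$ recorded in Proposition \ref{prop:parachute-qd-identity}, evaluated at $z=1$, gives $u_s(1) = \overline{u_s(1)}$, so $u_s(1)$ lies on the real axis of $\HH$; since the image lies in $\mathcal{P}_k$, the point $u_s(1)$ lies in the segment $\mathcal{P}_k \cap \R$ whose hyperbolic midpoint is $\mathcal{O}$ and whose half-length is exactly the constant $C(k)$ from Proposition \ref{prop:origin_is_real}, whence $d_{\HH}(\mathcal{O}, u_s(1)) \leq C(k)$ for every $s$. Combining the two estimates via the triangle inequality yields, for every $z \in \gamma_1$ and every index $s_j$ in the subsequence, $d_{\HH}(\mathcal{O}, u^{s_j}(z)) \leq d_{\HH}(\mathcal{O}, u^{s_j}(1)) + L\!\left(u^{s_j}(\gamma_1)\right) \leq C(k) + 4\pi\sqrt{M_0}$, so $M := C(k) + 4\pi\sqrt{M_0(k)}$ works. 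The genuine difficulty of the section lies entirely in Proposition \ref{prop:bounded-qd-subseqn} (the blow-up and normal-family analysis of the central Laurent coefficients); granting it, the Bounded Core Lemma is essentially a bookkeeping assembly, the only subtlety being the clean identification $e^{u_s} = 2|\phi^s|$ forced by the free boundary, which is what converts the pointwise Hopf bound into a length bound for the image of the core curve.
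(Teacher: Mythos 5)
Your proof is correct, and its analytic core coincides with the paper's: you establish $\mathcal{J}^{u_s}=0$ and $e^{s}=2|\phi^s|$ on the core curve (the paper derives this in Proposition \ref{prop:qd-equals-core-energy} from the reflection symmetry of the doubled map across $\gamma_1$, which is equivalent to your use of the Neumann condition $(\nabla u_s)|_{\gamma_1}(\nu)=0$), and you then convert the pointwise Hopf bound of Proposition \ref{prop:bounded-qd-subseqn} into a length bound for $u_s(\gamma_1)$; your speed computation $|du_s(\partial_\theta)|_\rho = 2\sqrt{|\phi^s|}$ is even consistent with the paper's normalization $e=\mathcal{H}+\mathcal{L}$. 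Where you genuinely deviate is the anchoring step that turns a length bound into a distance-to-$\mathcal{O}$ bound. The paper (Proposition \ref{prop:core-energy-bounds-dist}) invokes the dihedral symmetry of the image curve: a closed curve invariant under the order-$k$ rotation group about $\mathcal{O}$ satisfies $d(\mathcal{O},\cdot)\leq L$, a hyperbolic-geometry fact it states tersely (the rotation by $2\pi/k$ moves a point at distance $d$ from $\mathcal{O}$ a distance comparable to $2d$, so a short symmetric curve cannot wander far). You instead pin the single point $u_s(1)$: by the reflection symmetry $u_s(\bar z)=\overline{u_s(z)}$ it is real, and since it lies in $\mathcal{P}_k$ it sits in the segment $\mathcal{P}_k\cap\R$ of half-length $C(k)$ centered at $\mathcal{O}$, after which the triangle inequality along the image curve finishes. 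Your route is more elementary, but it uses one ingredient you state without justification: that the parachute map's image lies in $\overline{\mathcal{P}_k}$. This is the convex hull property (maximum principle) for harmonic maps into nonpositively curved targets, applied to the doubled Dirichlet problem whose boundary data $\Gamma_s$ lies in the convex set $\overline{\mathcal{P}_k}$; the paper itself uses this fact implicitly (e.g.\ in case (b) of the proof of Proposition \ref{prop:bounded-qd-subseqn}), so the citation is fair, but in a self-contained write-up it should be made explicit, since without it the pinned point $u_s(1)$ could a priori be real yet far from $\mathcal{O}$.
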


\begin{proposition}\label{prop:core-energy-bounds-dist}
For each parachute map $u_s$ defined as above, there exists a constant $B>0$ independently of $s$ so that $$d(\mathcal{O} , u_s(z)) \leq B \cdot \left[ \sup_{z\in \gamma _1} e^s (z) \right] ^{\frac{1}{2}}$$
\end{proposition}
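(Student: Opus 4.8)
The plan is to exploit the free boundary (Neumann) condition on the core curve together with the $k$-fold rotational symmetry of $u_s$ about the origin $\mathcal{O}$. First I would record that, since $\gamma_1$ is the free boundary of the problem \eqref{eqn:PFDrs}, the parachute map satisfies $(\nabla u_s)|_{\gamma_1}(\nu)=0$; writing $z=re^{i\theta}$, this says $\partial_r u_s \equiv 0$ on $\{r=1\}$. Consequently $du_s$ restricted to $\gamma_1$ is purely tangential, $du_s|_{\gamma_1}=u_\theta\,d\theta$, and a direct computation gives $\mathcal{H}^s=\mathcal{L}^s$ there (equivalently $\mathcal{J}^s=0$ on $\gamma_1$). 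Hence the hyperbolic speed of the image loop $\theta\mapsto u_s(e^{i\theta})$ is $\sqrt{\rho}\,|u_\theta|=\sqrt{2\,e^s}$ pointwise on $\gamma_1$ — up to the normalization of $e^s$, a fixed universal multiple of $[e^s]^{1/2}$, which is the same phenomenon recorded in Proposition \ref{prop:qd-equals-core-energy}. In particular any subarc of $u_s(\gamma_1)$ has $\HH$-length at most $(\text{angular length})\cdot\sqrt{2}\,[\sup_{\gamma_1}e^s]^{1/2}$.

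Next I would use the symmetry to control the maximal excursion of the image loop. Let $z^{\ast}\in\gamma_1$ realize $R_{\max}:=\max_{z\in\gamma_1}d(\mathcal{O},u_s(z))$, and consider its neighbor $\xi_1 z^{\ast}$ with $\xi_1=e^{2\pi i/k}$. The rotational identity $u_s(\xi_1 z)=\xi_{-1}u_s(z)$ shows that $u_s(\xi_1 z^{\ast})$ is the image of $u_s(z^{\ast})$ under the rotation $\xi_{-1}$ of $\HH$ fixing $\mathcal{O}$; as this rotation is an isometry fixing $\mathcal{O}$, the point $u_s(\xi_1 z^{\ast})$ again lies at distance exactly $R_{\max}$ from $\mathcal{O}$, displaced by angle $2\pi/k$ about $\mathcal{O}$. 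The short subarc of $\gamma_1$ from $z^{\ast}$ to $\xi_1 z^{\ast}$ has angular length $2\pi/k$, so its image has $\HH$-length at most $\tfrac{2\pi}{k}\sqrt{2}\,[\sup_{\gamma_1}e^s]^{1/2}$; on the other hand that length is at least the distance between its endpoints, $D(R_{\max}):=d_{\HH}\!\left(u_s(z^{\ast}),\xi_{-1}u_s(z^{\ast})\right)$.

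The final step is a purely hyperbolic-trigonometric estimate. Two points at distance $R$ from $\mathcal{O}$ separated by angle $2\pi/k$ satisfy the hyperbolic law of cosines
\[
\cosh D(R)=1+2\sinh^2 R\,\sin^2\!\left(\tfrac{\pi}{k}\right),
\]
and the key observation is that $D(R)/R$ is bounded below by a positive constant $c_k$ over all $R>0$: it is continuous and strictly positive on $(0,\infty)$, with limits $2\sin(\pi/k)$ as $R\to 0$ and $2$ as $R\to\infty$, so its infimum is positive. Combining $c_k R_{\max}\le D(R_{\max})\le \tfrac{2\pi\sqrt{2}}{k}[\sup_{\gamma_1}e^s]^{1/2}$ gives the claim with $B=\tfrac{2\pi\sqrt{2}}{k\,c_k}$, which depends only on $k$.

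I expect the only genuinely delicate points to be, first, the clean passage from the free-boundary condition to $\sqrt{\rho}\,|u_\theta|=\sqrt{2e^s}$ on $\gamma_1$, and, more importantly, securing a \emph{purely multiplicative} lower bound $D(R)\ge c_k R$ with no additive constant — it is exactly this feature that produces a bound of the form $B\,[\sup e^s]^{1/2}$ rather than one carrying a spurious additive term. The rotational symmetry is what makes the argument run: it forces the image loop to re-enter a rotated copy of its farthest point, so a short loop cannot stray far from $\mathcal{O}$.
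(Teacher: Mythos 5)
Your proof is correct, and it shares the paper's basic mechanism: the $D_k$-equivariance $u_s(\xi_j z)=\xi_{-j}\,u_s(z)$ forces the image of the core curve to sit symmetrically about $\mathcal{O}$, so it cannot stray far from $\mathcal{O}$ without being long, and its length is controlled by $\left[\sup_{\gamma_1}e^s\right]^{1/2}$. But the execution differs in two substantive ways. The paper's proof is softer and shorter: since $k$ is even, the image loop contains both $u_s(z)$ and its rotation by $\pi$ about $\mathcal{O}$, which immediately gives $d(\mathcal{O},u_s(z))\leq L\left(u_s(\gamma_1)\right)$; then Cauchy--Schwarz over the whole circle gives $L\left(u_s(\gamma_1)\right)\leq (2\pi)^{1/2}\left(\int_{\gamma_1}e^s\,dl\right)^{1/2}\leq B\left[\sup_{\gamma_1}e^s\right]^{1/2}$. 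In particular, the paper never invokes the Neumann condition $(\nabla u_s)|_{\gamma_1}(\nu)=0$: the crude pointwise inequality $|Du_s(\partial_\theta)|\leq \|Du_s\|$ suffices, so your first ``delicate point'' (the passage from the free boundary condition to an exact formula for the tangential speed) can be dispensed with entirely -- it is only needed for the equality in Proposition \ref{prop:qd-equals-core-energy}, not here, and the factor $\sqrt{2}$ is in any case a normalization convention absorbed into $B$. What your route adds is the quantitative lemma genuinely absent from the paper: for two points at distance $R$ from $\mathcal{O}$ subtending angle $2\pi/k$, the law of cosines gives $\cosh D(R)=1+2\sinh^2 R\,\sin^2(\pi/k)$, and $D(R)\geq c_k R$ with $c_k>0$ (your endpoint limits $2\sin(\pi/k)$ and $2$, plus continuity and positivity, do give a positive infimum). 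This is correct and is exactly what makes your bound purely multiplicative; it also localizes the argument to a single $2\pi/k$-arc of the core curve, so it would survive under weaker hypotheses (e.g.\ control of $e^s$ on only one fundamental arc, or only the rotational part of the symmetry group). The cost is the extra hyperbolic-trigonometric lemma and an extremal-point argument where the paper needs only a total-length bound and Cauchy--Schwarz.
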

\begin{proof}
The image of the core curve $\gamma _1$ under the map $\widetilde{u_s}$ inherits the same symmetries of the map $\widetilde{u_s}$, so $$d(\mathcal{O} , u_s(z)) \leq L (u_s (\gamma _1))$$ for all $z\in \gamma _1$, where $\mathcal{O}$ is the origin of the hyperbolic plane and $L(u_s(\gamma _1))$ denotes the length of the image of the core curve under $u_s$. Hence, it suffices to bound $L(u_s(\gamma _1))$ for all $s$.
Note that
\begin{align*}
L(u_s(\gamma _1)) & \equiv \int _{\gamma _1} |Du ^s (\partial _ \theta )| dl \\
  & \leq (2\pi) ^\frac{1}{2} \left(\int _{\gamma _1} |Du(\partial _ \theta )|^2 dl \right)^{\frac{1}{2}}\\
  & \leq (2\pi) ^\frac{1}{2} \left(\int _{\gamma _1} ||Du||^2 dl \right)^{\frac{1}{2}}\\
  & \leq (2\pi) ^\frac{1}{2} \left(\int _{\gamma _1} e^s(z) dl \right)^{\frac{1}{2}}\\
  & \leq B \cdot \left[ \sup_{z\in \gamma _1} e^s (z) \right] ^{\frac{1}{2}}
\end{align*}
where $B=(2\pi) ^\frac{1}{2} \cdot L(\gamma _1)^{\frac{1}{2}}$ is a constant independent of $s$.
\end{proof}
Note that the initial rescaling of $\Omega _{r,s}$ by a factor of $\frac{1}{r}$ changes $L(\gamma _1)$ by a factor of $r^{\frac{1}{2}}$. This only changes the constant appearing on the right-hand side of Proposition \ref{prop:core-energy-bounds-dist}.

\begin{proposition}\label{prop:qd-equals-core-energy}
For each parachute map $u_s$ defined as above, for each $z\in \gamma _1$, we have $$e^s(z) = 2|\phi ^s(z)|.$$
\end{proposition}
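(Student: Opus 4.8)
The plan is to reduce the identity $e^s = 2|\phi^s|$ on the core curve to the vanishing of the conformality defect $\mathcal{J}^s = \mathcal{H}^s - \mathcal{L}^s$ there, and then to extract that vanishing directly from the free (natural) boundary condition that the parachute map satisfies along $\gamma_1$. First I would record the two structural facts that hold on the flat domain $\Omega_{1,s}\subset\C$, where $\sigma\equiv 1$: writing $\phi^s = \rho(u_s)\,(u_s)_z\,\overline{(u_s)_{\bar z}}$, one has $|\phi^s| = \rho(u_s)\,|(u_s)_z|\,|(u_s)_{\bar z}| = \sqrt{\mathcal{H}^s\mathcal{L}^s}$, while $e^s = \mathcal{H}^s + \mathcal{L}^s$. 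By the arithmetic–geometric mean inequality, $e^s - 2|\phi^s| = \bigl(\sqrt{\mathcal{H}^s}-\sqrt{\mathcal{L}^s}\bigr)^2 \geq 0$ everywhere, with equality precisely when $\mathcal{H}^s = \mathcal{L}^s$. Thus the proposition is equivalent to showing $\mathcal{H}^s = \mathcal{L}^s$ on $\gamma_1$.

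The computation that establishes this equality is a short polar-coordinate exercise. Since $\gamma_1$ is the free boundary of the rescaled problem \eqref{eqn:PFDrs}, the parachute map satisfies the natural boundary condition $(\nabla u_s)|_{\gamma_1}(\nu)=0$, recorded when the mapping problem was posed; as $\gamma_1$ is a circle and $\nu = \pm\partial_r$, this reads $\partial_r u_s \equiv 0$ on $\gamma_1$. Using $\partial_z = \tfrac{1}{2}e^{-i\theta}\bigl(\partial_r - \tfrac{i}{r}\partial_\theta\bigr)$ and $\partial_{\bar z} = \tfrac{1}{2}e^{i\theta}\bigl(\partial_r + \tfrac{i}{r}\partial_\theta\bigr)$, evaluation at $r=1$ with $\partial_r u_s = 0$ gives $(u_s)_z = -\tfrac{i}{2}e^{-i\theta}(u_s)_\theta$ and $(u_s)_{\bar z} = \tfrac{i}{2}e^{i\theta}(u_s)_\theta$, whence $|(u_s)_z| = |(u_s)_{\bar z}| = \tfrac{1}{2}|(u_s)_\theta|$. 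Therefore $\mathcal{H}^s = \rho(u_s)|(u_s)_z|^2 = \rho(u_s)|(u_s)_{\bar z}|^2 = \mathcal{L}^s$ on $\gamma_1$, and consequently $e^s = 2\mathcal{H}^s = 2\sqrt{\mathcal{H}^s\mathcal{L}^s} = 2|\phi^s|$ there, as desired.

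There is essentially no analytic obstacle here: the entire content of the statement is the natural boundary condition $\partial_\nu u_s = 0$ on the free boundary, together with the fact that the domain metric is flat. The only points requiring care are to confirm that the normal direction along $\gamma_1$ is radial and to keep the constants in the polar form of $\partial_z,\partial_{\bar z}$ straight. I would also remark on the geometric meaning, which is what makes the identity useful downstream: all of the energy of $u_s$ along the core curve is tangential, and the Jacobian $\mathcal{J}^s = \mathcal{H}^s - \mathcal{L}^s$ vanishes on $\gamma_1$. This is entirely consistent with Proposition \ref{prop:parachute-maps_non-negative-J}, where $\mathcal{J}^s \geq 0$ in the interior, so that the core curve is precisely the locus along which the parachute map folds.
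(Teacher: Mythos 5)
Your proof is correct, and its first half --- reducing the identity to $\mathcal{H}^s = \mathcal{L}^s$ on $\gamma_1$ via $e^s - 2|\phi^s| = \bigl(\sqrt{\mathcal{H}^s}-\sqrt{\mathcal{L}^s}\bigr)^2 \geq 0$ --- is exactly the paper's reduction. Where you diverge is in how the vanishing of $\mathcal{J}^s = \mathcal{H}^s - \mathcal{L}^s$ on the core curve is obtained. The paper gets it in one line from the reflection symmetry $u_s(1/\overline{z}) = u_s(z)$ of the doubled parachute map: the core curve is the fixed-point set of an orientation-reversing symmetry of the domain under which the map is invariant, and the Jacobian is anti-invariant under such a reflection, so it must vanish on the fixed set. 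You instead invoke the natural (Neumann) boundary condition $(\nabla u_s)|_{\gamma_1}(\nu)=0$ recorded when \eqref{eqn:PFDrs} was posed, and verify $|(u_s)_z| = |(u_s)_{\bar z}|$ by the polar computation. These two mechanisms are equivalent faces of the same fact --- differentiating the symmetry $u_s(re^{i\theta}) = u_s(r^{-1}e^{i\theta})$ at $r=1$ gives precisely $\partial_r u_s = 0$ on $\gamma_1$, and conversely the Neumann condition is what licenses the Schwarz reflection in the Doubling Lemma \ref{lem:doublinglemma} --- so your argument rests entirely on facts the paper establishes. What your route buys is locality and generality: it shows $e = 2|\phi|$ along the free boundary of \emph{any} partially-free harmonic mapping problem, symmetric or not, and it makes transparent the geometric statement (used later, in \S\ref{section:toyproblem-the_energy_estimate}) that all the energy on the core curve is tangential. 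What the paper's route buys is brevity and the absence of any coordinate computation, where a dropped factor in the polar forms of $\partial_z, \partial_{\bar z}$ would spoil the cancellation. One small point worth making explicit in your write-up: differentiability of $u_s$ up to $\gamma_1$, which you need even to state $\partial_r u_s = 0$ pointwise, is supplied by the Doubling Lemma, since $u_s$ is the restriction of a map that is harmonic, hence smooth, across the core curve.
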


\begin{proof}
In general, we have an inequality relating the energy density, the Hopf differential, and the Jacobian:
$$ 2||\Phi (z) || \leq e (z) \leq 2||\Phi (z)||  + | \mathcal{J} (z)|.$$
These follow from the equations for the holomorphic and anti-holomorphic energies:
\begin{align*}
e & = \calH + \calL \\
\calJ & = \calH - \calL \\
||\Phi || & = \sqrt{ \calH \calL }
\end{align*}

Since the map $u_s$ is symmetric by reflection across the core curve, the identity $\mathcal{J}^s (z) = 0$ holds for all $z\in \gamma _1$ and for all $s$. Thus, $e^s(z) = 2 ||\Phi ^s(z)||$. Furthermore, since $\sigma(z) \equiv 1$, we have simply $$e^s(z) = 2|\phi ^s(z)|.$$
\end{proof}

We now have enough to assemble the proof of the Bounded Core Lemma \ref{lem:boundedcore}.
\begin{proof}
We aim to show that there exists $M>0$ such that, for all $s$ large enough, we have
$$d(\mathcal{O} , u_s(z)) < M.$$

By Proposition \ref{prop:core-energy-bounds-dist}, each parachute map $u_s$ satisfies $$d(\mathcal{O} , u_s(z)) \leq B \cdot \left[ \sup_{z\in \gamma _1} e^s (z)\right] ^{\frac{1}{2}}.$$

By Proposition \ref{prop:qd-equals-core-energy}, for each $z\in \gamma _1$, we have $$e^s(z) = 2|\phi ^s(z)|.$$

By Proposition \ref{prop:bounded-qd-subseqn}, for $r$ large enough, for all $s>r$, there exists a positive constant $M < \infty$ such that, for all $z\in \gamma _1$, we have  $$|\phi ^s (z)|<M.$$

Combining these, we arrive at our desired conclusion.
\end{proof}

\subsection{The Energy Estimate: Parachute maps tied to Scherk map}\label{section:toyproblem-the_energy_estimate}
With the Bounded Core Lemma \ref{lem:boundedcore} in hand, we now derive an estimate on the discrepancy $E(w,\Omega_{r,s}) - E(u_s , \Omega_{r,s})$ between the energies of the parachute map $u_s$ and the Scherk map $w$ over their common domains.
\begin{lemma}\label{lem:energyestimate}
(The Energy Estimate)
There is a function $F(r)$ such that, when $r$ is large enough, for all $s>r$, the following inequality holds:
$$E(\Omega_{r,s},w) - E(\Omega_{r,s},u_s) < F(r)$$
\end{lemma}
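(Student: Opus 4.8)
The plan is to bound the energy difference from above by the energy of a single harmonic map on the fixed ``hole'' $\Omega_r$, which can in turn be controlled through the geometry of the free boundary. Since $u_s$ solves the partially-free problem \eqref{eqn:PFDrs}, the restriction $w|_{\Omega_{r,s}}$ is already a competitor and the difference $E(\Omega_{r,s},w)-E(\Omega_{r,s},u_s)$ is nonnegative; the whole point is a uniform (in $s$) upper bound. First I would build a competitor $\hat{u}_s:\Omega_s\to\HH$ for the Dirichlet problem solved by $w|_{\Omega_s}$: let $v_s:\Omega_r\to\HH$ be the harmonic extension with boundary data $u_s|_{\gamma_r}$ (which exists since $\HH$ is simply connected and nonpositively curved), and set $\hat{u}_s=v_s$ on $\Omega_r$ and $\hat{u}_s=u_s$ on $\Omega_{r,s}$. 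This is continuous across $\gamma_r$ and agrees with $w$ on $\gamma_s$, so minimality of $w$ gives $E(\Omega_s,w)\le E(\Omega_r,v_s)+E(\Omega_{r,s},u_s)$. Subtracting $E(\Omega_r,w)\ge 0$ and using $E(\Omega_s,w)=E(\Omega_r,w)+E(\Omega_{r,s},w)$ yields the clean reduction
\[
E(\Omega_{r,s},w)-E(\Omega_{r,s},u_s)\le E(\Omega_r,v_s).
\]

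It then remains to bound $E(\Omega_r,v_s)$ uniformly in $s$. Since $v_s$ is harmonic on the disk $\Omega_r\subset\R^2$ with values in the nonpositively curved $\HH$, Choe's iso-energy inequality (Theorem \ref{thm:isoenergy}, with $n=2$) gives $E(\Omega_r,v_s)\le E(v_s|_{\gamma_r},\gamma_r)=E(u_s|_{\gamma_r},\gamma_r)$, the one-dimensional energy of the free boundary values. Here the free boundary condition $(\nabla u_s)|_{\gamma_r}(\nu)=0$ is decisive: the normal derivative vanishes along $\gamma_r$, so the tangential energy coincides with the full energy density, and $E(u_s|_{\gamma_r},\gamma_r)=\int_{\gamma_r}e^{s}\,dl\le L(\gamma_r)\cdot\sup_{\gamma_r}e^{s}$. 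Thus everything reduces to a pointwise energy-density bound on the core curve.

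For that pointwise bound I would invoke the work already done for the Bounded Core Lemma. By Proposition \ref{prop:qd-equals-core-energy}, reflection symmetry forces $\mathcal{J}^{s}\equiv 0$ on $\gamma_r$, so $e^{s}=2|\phi^s|$ there, and Proposition \ref{prop:bounded-qd-subseqn} bounds $|\phi^s|<M=M(k)$ on the core along a sequence $\{s_j\}\nearrow\infty$. Alternatively, and as indicated in the outline, one passes to the doubled map on $\Omega_{\frac1s,s}$ (Lemma \ref{lem:doublinglemma}), where $\gamma_1$ becomes an \emph{interior} curve: the subharmonicity of $\cosh\circ\,\mathrm{dist}(u_s,w)-1$ (the computation from the proof of Theorem \ref{thm:crusher_uniqueness}, valid since $u_s$ and $w$ are both harmonic) together with the maximum principle propagates the Bounded Core Lemma \ref{lem:boundedcore} bound on $\gamma_1$ and the vanishing of $\mathrm{dist}(u_s,w)$ on $\gamma_s$ to a uniform bound on $\mathrm{dist}(u_s,w)$, hence on the image of a fixed neighborhood of the core. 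Cheng's interior gradient estimate (Theorem \ref{thm:chenglemma}) then yields a uniform bound on $|\nabla u_s|$, and so on $e^{s}$, along $\gamma_1$, with all constants ($a,b,\beta,y_0$) chosen independently of $s$ because the image lies in a fixed compact set and the domain is flat. Either route gives $\sup_{\gamma_r}e^{s}\le C(k)$ along the sequence. Defining $F(r):=C(k)\,L(\gamma_r)$, an explicit increasing function of $r$ (accounting for the $r^{1/2}$ rescaling factor noted after Proposition \ref{prop:core-energy-bounds-dist}), completes the estimate along the sequence $\{s_j\}$, which is all the diagonalization in \S\ref{section:construction-proof} requires.

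I expect the main obstacle to be the uniform application of Cheng's estimate: one must verify that a fixed-radius geodesic ball about each core point sits inside the (growing) doubled domain for all large $s$, and that the image-distance bound holds on the whole ball rather than merely on $\gamma_1$, which is exactly what the maximum-principle argument for $\cosh\circ\,\mathrm{dist}(u_s,w)-1$ supplies. The remaining care is bookkeeping the rescaling by $\frac1r$ so that the $k$-dependent constant $M(k)$ and the core length combine into a genuine function $F(r)$ of $r$ alone.
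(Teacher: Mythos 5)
Your proposal is correct, and its skeleton is exactly the paper's: you build the same piecewise competitor (your $v_s$ is the paper's $p_s$, the energy-minimizing harmonic extension of $u_s|_{\gamma_r}$ into the disk), invoke energy-minimality of the Scherk map on $\Omega_s$ to reduce the problem to bounding $E(\Omega_r,v_s)$ uniformly in $s$, and then apply Choe's iso-energy inequality (Theorem \ref{thm:isoenergy} with $n=2$) to reduce further to the one-dimensional energy of $u_s|_{\gamma_r}$. Where you genuinely differ is in the final pointwise bound along the core. The paper takes the long way around: it invokes the Bounded Core Lemma \ref{lem:boundedcore} to bound $d_{\HH}(\mathcal{O},u_{s_j}(\gamma_r))$, propagates this via the maximum principle for the subharmonic distance function between $u_s$ and $w$ to all of $\Omega_{r,s}$, and then runs Cheng's interior gradient estimate (Theorem \ref{thm:chenglemma}) to recover a uniform bound on $|\nabla u_{s_j}|$ along $\gamma_r$. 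Your primary route observes that this detour can be skipped: Proposition \ref{prop:qd-equals-core-energy} gives $e^s=2|\phi^s|$ on the core (reflection symmetry kills $\mathcal{J}^s$ there), and Proposition \ref{prop:bounded-qd-subseqn} gives $|\phi^s|<M(k)$ there along a subsequence, so $\sup_{\gamma_r}e^{s_j}\leq 2M$ directly---which is precisely the quantity Cheng's estimate is deployed to produce. Since the Bounded Core Lemma is itself deduced from these same two propositions (via Proposition \ref{prop:core-energy-bounds-dist}), the paper in effect converts an energy-density bound into an image-distance bound and then back into a gradient bound; your shortcut avoids the round trip, at the cost of forgoing the geometric byproduct (uniform proximity of $u_s$ to $w$ on the whole annulus) that the paper's route establishes along the way. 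Two further points you handle correctly: only the inequality $|Du_s(\partial_\theta)|^2\leq e^s$ is needed, so the free-boundary condition supplies equality but is not logically essential to the upper bound; and both your argument and the paper's yield the estimate only along the subsequence $\{s_j\}$ supplied by Proposition \ref{prop:bounded-qd-subseqn}, despite the lemma's ``for all $s>r$'' phrasing---which, as you note, is all that the diagonalization in \S\ref{section:construction-proof} requires.
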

\begin{proof}
We can estimate the discrepancy between the energies of $w|_{\Omega_{r,s}}$ and $u_s|_{ \Omega_{r,s}}$ by the energy of a harmonic map of a disc $f_s: \Omega_s \rightarrow \HH$. To do this, consider the unique harmonic map $p_s : \Omega _r \rightarrow \HH$ which minimizes energy in its homotopy class of maps with boundary value $p_s | _{\gamma_r} \equiv u_s |_{\gamma _r}$. Define the piecewise harmonic map $f _s :\Omega _s \rightarrow \HH$ by
\begin{align*}
f_s |_{\Omega _r}&= p_s\\
f_s |_{\Omega_{r,s}}&=u_s.
\end{align*}
The map $f_s$ is such that $E(\Omega _s, w) \leq E(\Omega _s, f_s)$, since $w$ is actually smooth and energy minimizing in all of $\Omega _r$. Since we can decompose these energies over the domain $\Omega _s = \Omega _r \cup \Omega _{r,s}$ as
\begin{align*}
E(\Omega_s, w) &= E(\Omega_r, w) + E(\Omega_{r,s}, w)\\
E(\Omega _s, f_s) &= E(\Omega _r, p_s) + E(\Omega_{r,s}, u_s)
\end{align*}
their difference on $\Omega _{r,s}$ can be bounded as
$$E(\Omega_{r,s}, w) - E(\Omega_{r,s}, u_s) \leq E(\Omega_r, p_s) - L ,$$
where the constant $L=L(k,r)$ is simply the energy of the Scherk map on the disk $\Omega _r$:
$L(k,r):= (\Omega _r , w).$
Hence, it suffices to exhibit a bound on $E(\Omega _r, p_s)$ in terms of $r$ alone. To do this, we will use the Bounded Core Lemma \ref{lem:boundedcore} in addition to the iso-energy inequality (Theorem \ref{thm:isoenergy}) and Cheng's interior gradient estimate (Theorem \ref{thm:chenglemma}). 
We apply the iso-energy inequality (Theorem \ref{thm:isoenergy}) first. Since $\Omega _r$ is a ball, the iso-energy inequality allows us to conclude
$$ E(\Omega _r, p_s) \leq E(\gamma _r, p_s| _{\partial \Delta r})$$
for which we know the right-hand side is uniformly bounded.
Recalling that
$$E(\Omega_{r,s}, w) - E(\Omega_{r,s}, u_s) \leq E(\Delta_r, p_s) - K_r ,$$
we also arrive at $E(\Omega_{r,s}, w) - E(\Omega_{r,s}, u_s)$ being uniformly bounded in $s$ for all $s>r$.
Let us now specialize Cheng's interior gradient estimate (Theorem \ref{thm:chenglemma}) to our situation. We use $M=\Omega_{r,s}$ and $N=\HH$, so that we can take $K=0$. A choice for the points $x_0$, $y_0$ and the constants $a$, $b$ will be chosen momentarily. We are guided by the following application of the Bounded Core Lemma.

Fix $r$ for which there is a sequence of indices $s_j \nearrow \infty$ as in the Bounded Core Lemma \ref{lem:boundedcore}. It follows that $$d(\mathcal{O} , u^{s_j}(z)) \leq M$$
for all $s_j$. By the maximum principle for the (subharmonic) distance function between the harmonic Scherk map $w$ and the harmonic parachute map $u_s$, we have that
\begin{align*}
\max_{p\in \Omega_{r,s}} \{d_\HH (u_s (p) , w(p) )\} &\leq \max_{p\in \gamma _r \cup \gamma _s}\{d_\HH (u_s (p) , w(p) )\} \\
& \leq \max_{p\in \gamma _r}\{d_\HH (u_s (p) , w(p) )\}
\end{align*}
for all $s >r$. The second inequality follows from the pointwise agreement of $u_s$ with $w$ on $\gamma _s$. Applying the triangle inequality, we obtain
\begin{align*}
\max_{ p\in \Omega_{r,s}}\{d_\HH (u_s (p) , w(p) )\} &\leq \max_{p\in \gamma _r}\{d_\HH (u_s (p) , \mathcal{O} )\} + \max_{p\in \gamma _r}\{d_\HH (w (p) , \mathcal{O} )\}\\
&\leq M + \max_{p\in \gamma _r}\{d_\HH (w (p) , \mathcal{O} )\}\\
&\leq M + P
\end{align*}
where $P=P(k)$ is a constant defined as
$$P(k) := \max_{p\in \gamma _r}\{d_\HH (w (p) , \mathcal{O} )\}< \infty.$$

Fix a constant $a>r$, choose $x_0 \in \gamma_r$ fix any point $y_0\in \HH$ at a distance $R$ of at least $M+P$ from the set $w(\Delta _r)$. Then there exist constants $b>1$ and $\beta >0$ are some independent of $s_j$, all of which are finite (although their admissible values are dependent on $k$ and the map $w$ on $\Delta _r)$). Choosing such $b$ and $\beta$, Cheng's Lemma \ref{thm:chenglemma} states that

$$|\nabla u_s (x_0)| ^2\leq c_m \frac{b^2(b^2 - \rho ^2 \circ u_s) ^2 }{a^2\beta}$$
Because $y_0$ is chosen to be a distance $R$ from $w(\Omega _r)$, for each $p \in \Omega _{r,s}$, we have:
\begin{align*}
\rho \circ u^s (p) &\equiv d_{\HH} (u^s (p), y_0)\\
& \leq d_{\HH} (u^s (p), w(p)) + d_{\HH} (w(p),y_0 )\\
& \leq (M + P) + R
\end{align*}
Hence it follows that $ \rho ^2 \circ u_s (x_0)\leq (M+P+R)^2$ and that
$$ b^2 - \rho ^2 \circ u_s \leq  b^2 + (M+P+R)^2.$$
This shows that $|\nabla u_s (x_0)| ^2$ is uniformly bounded for all $x_0\in \gamma _r$, for all $s_j$.

Finally, observe that $\nabla u_s$ has zero $\partial _n$ component along $\gamma _r$, so that $$|\nabla u_s|(x_0) = |\nabla u_s (\partial _\theta)|(x_0) $$ for all $x_0\in \gamma _r$. Since $p_s |_{\gamma _r} \equiv u_s|_{\gamma_r}$ is the parameterized Dirichlet boundary condition for $p_s$, it is thus true that $$|\nabla p_s (\partial _\theta)|(x_0)  = |\nabla u_s |(x_0) .$$ As we have shown that $|\nabla u_s (x_0)| ^2$ is uniformly bounded for all $x_0\in \gamma _r$, for all $s_j$, it follows that $E(\partial \Omega _r, p_s | _{\partial \Omega _r}),$ and hence $E(\Omega_r , p_s) $ is bounded independently of $s$.
\end{proof}


\section{Concluding remarks}

We return to addressing the original investigation of harmonic maps between compact hyperbolic surfaces. Theorem \ref{thm:crusher_existence} describes a harmonic map locally around a handle collapse. We describe below two procedures which suggest the maps from Theorem \ref{thm:crusher_existence} are indeed the local model for handle crushing harmonic maps between \emph{compact} surfaces, in that they approximate well a neighborhood of the handle collapsing.

One process considers a sequence of handle crushing harmonic maps $h_t :(\Sigma _g, \sigma_t) \rightarrow (\Sigma _{h<g}, \rho)$ between compact surfaces. Adopting an appropriate sense for a limiting harmonic map, we take $t\rightarrow \infty$ and obtain a harmonic map $h:(\Sigma _g ^*, \sigma) \rightarrow (\Sigma _h,\rho)$ from a \emph{noded} surface $(\Sigma _g ^* , \sigma) = (\Sigma_{h} ^* , \sigma) \vee (\Sigma _{g-h} ^*, \sigma)$. We expect to recover (away from the node point) a harmonic diffeomorphism $h:(\Sigma_{h} ^* ,\sigma) \rightarrow (\Sigma _h,\rho)$ on one part and a handle crushing map $h:(\Sigma_{g-h} ^*,\sigma) \rightarrow (\Sigma_h ,\rho)$ (of the type we have produced in Theorem \ref{thm:crusher_existence}) on the other part. Algebraically, we describe this as a decomposition of the domain on which the limiting harmonic map induces an isomorphism or maps trivially on fundamental groups.

The second process suggests our non-compact handle crushing map can be built up as part of an almost harmonic map, and a singular perturbation argument (or more specifically, a bridge principle) can be employed to correct it to become harmonic. We expect the correction to be small, so that our model is a close approximate on the handle crushing component. This correction is expected to be small by relating the data of the Hopf differentials near the bridged components.

We describe these two processes in more detail, below.

\vskip 11pt

\emph{Limit of compact maps to a diffeomorphism and a handle crushing model:} Let us first describe an example of a family of holomorphic quadratic differentials $\Phi _t$ on a genus two surface which limits to a holomorphic quadratic differential $\Phi$ on a noded surface of genus two (which is topologically a union of two punctured genus one surfaces away from the node point). This example is borrowed from $\sec A.4$ of \cite{McMullen89}.

Fix a torus with holomorphic quadratic differential $dz^2$. Cut open two segments of its horizontal foliation, each of length $L$ and away from the singularities, and glue in a Euclidean cylinder of height $H$ and circumference $t:=2L$ with its foliation by circles. This results in a holomorphic quadratic differential $\Phi _t$ on a genus two surface with four zeros at the endpoints of the slits. Suppose the slits are the sides of a square and let $t\rightarrow 0$ while $\frac{H}{L}$ is fixed. Fixing a basepoint, the limit quadratic differential $\Phi$ lives on the punctured torus with a fourth order pole.

Let us adopt this notion of a limiting pair $(\Sigma _g, \Phi _t)$ of Riemann surface $\Sigma _g$ with holomorphic quadratic differential $\Phi _t$ in the following construction. Let $h_t:(\Sigma _g,\sigma _t) \rightarrow (\Sigma _{g-1}, \rho)$ be a sequence of handle crushing harmonic maps in which $\sigma _t$ is an unbounded ray in the Teichm\"uller metric on $\mathcal{T}_g$. Suppose $h_t$ induces the same homomorphism on fundamental groups for all $t$. Note that a single handle is being crushed for each $t$. Fixing a basepoint in a neighborhood of the crushed handle, we consider the limit of $(\Sigma _g, \Phi ^{h_t})$, and ask:

\begin{question}
Can this process be carried out to produce our handle crushing map of a square punctured torus onto an ideal square from Corollary \ref{cor:torus_crusher}?
\end{question}

\emph{Bridging harmonic diffeomorphisms with handle crushers:} A \emph{bridge principle} would be effective in showing that the harmonic non-compact handle crushing model from Theorem \ref{thm:crusher_existence} closely describes the harmonic handle crushing near a neighborhood of a handle crushing map between compact surfaces. Let us describe our reasoning for expecting this.

Let $(\Sigma _h, \sigma)$ be a compact Riemann surface, and choose a holomorphic quadratic differential $\Phi$ which has a zero $p\in \Sigma _h$ of order $k-2$. Consider the one-parameter family of holomorphic quadratic differentials $$ \Phi ^t := t\Phi$$ obtained by scaling $\Phi$. For each value of $t$, there exists a unique hyperbolic metric $\rho _t$ on $\Sigma _h$ for which there exists a unique harmonic diffeomorphism $$u_t \equiv u_{\rho ^t} : (\Sigma _h , \sigma) \rightarrow (\Sigma _h, \rho _t )$$ homotopic to the identity $id_{\sigma, \rho}:(\Sigma _h , \sigma) \rightarrow (\Sigma _h, \rho _t ),$ by work of Wolf \cite{Wolf89}.

Observe that there is a sequence of $\Phi ^t$-disks $D^t$ centered at $p$ of increasing and diverging radii $r^t = 2 diam(D^t)$. This produces a sequence of harmonic maps $$ u^t: D^t \rightarrow \HH$$ with Hopf differentials $\Phi(u^t) = t\Phi$ (by construction). Hence, upon rescaling, we have a sequence of harmonic maps: $$ v^t: \sqrt{t}\cdot D^t \rightarrow \HH$$ with Hopf differentials $\Phi(v^t) = \Phi$. This implies that $v^t$ converges to a harmonic map $v:\C \rightarrow \HH$ with a Hopf differential $\Phi ^v$ with a zero of order $k-2$ at the origin. We view $(\C, \Phi ^v)$ as the punctured sphere with a Hopf differential having a pole of order $k+2$.

The foliation near the pole of $\Phi ^v$ is similar to the foliation near the pole of a meromorphic quadratic differential on a punctured Riemann surface $\Sigma _{g-h} ^*$ of positive genus. If $k$ is even, Theorem \ref{thm:crusher_existence} states there is a harmonic map $w:\Sigma _{g-h} ^* \rightarrow \mathcal{P}_k$.

Now, consider the harmonic diffeomorphisms $u^t : (\Sigma _h , \sigma) \rightarrow (\Sigma _h, \rho _t )$ above and the handle crushing harmonic map $w:\Sigma _{g-h} ^* \rightarrow \mathcal{P}_k$. Near the zero of order $k-2$ of $\Phi ^{u_t}$ and near the pole of $\Phi ^w$, the (unmeasured) foliations look more similar as $t\nearrow \infty$. Guided by these similar foliations, we can cut appropriately and bridge these maps together to produce an almost everywhere harmonic map. We ask:

\begin{question}
Can we bridge the harmonic map $u^t : (\Sigma _h , \sigma) \rightarrow (\Sigma _h, \rho _t )$ (for some $t$) with the harmonic handle crushing map $w:\Sigma _{g-h} ^* \rightarrow \mathcal{P}_k$ to produce a harmonic map $f:\Sigma _g \rightarrow \Sigma _h$ between compact Riemann surfaces?
\end{question}

If this is possible, the Hopf differential data and the diffeo-topological singularities of the map $f$ can be studied through our handle crushing models.

\bibliographystyle{alpha}
\bibliography{harmonic-map_puncRS-H2poly}

\end{document}